\documentclass[english]{amsart}

\usepackage{comment}

\usepackage{amsfonts, amssymb, amsbsy, amsmath, mathrsfs, amsthm,enumerate, bm}

\usepackage{graphicx}
\usepackage{url}
\usepackage{xcolor}
\usepackage{hyperref}
\usepackage{cleveref}

\newcommand\dmu{\, d \mu}
\newcommand\dt{\, d t}
\newcommand\dnu{\, d \nu}
\newcommand\ds{\, d s}

\newcommand\R{\mathbb R}

\usepackage{mathrsfs}

\theoremstyle{definition}
\newtheorem{definition}{Definition}[section]

\newtheorem{remark}[definition]{Remark} 
\theoremstyle{plain}
\newtheorem{theorem}[definition]{Theorem}
\newtheorem{lemma}[definition]{Lemma}
\newtheorem{corollary}[definition]{Corollary}
\newtheorem{proposition}[definition]{Proposition}

\numberwithin{equation}{section}

\crefname{theorem}{Theorem}{Theorems}
\Crefname{theorem}{Theorem}{Theorems}
\crefname{lemma}{Lemma}{Lemmas}
\Crefname{lemma}{Lemma}{Lemmas}
\crefname{proposition}{Proposition}{Propositions}
\Crefname{proposition}{Proposition}{Propositions}
\crefname{corollary}{Corollary}{Corollaries}
\Crefname{corollary}{Corollary}{Corollaries}
\crefname{definition}{Definition}{Definitions}
\Crefname{definition}{Definition}{Definitions}
\crefname{remark}{Remark}{Remarks}
\Crefname{remark}{Remark}{Remarks}
\crefname{example}{Example}{Examples}
\Crefname{example}{Example}{Examples}

\def\Xint#1{\mathchoice
   {\XXint\displaystyle\textstyle{#1}}%
   {\XXint\textstyle\scriptstyle{#1}}%
   {\XXint\scriptstyle\scriptscriptstyle{#1}}%
   {\XXint\scriptscriptstyle\scriptscriptstyle{#1}}%
   \!\int}
\def\XXint#1#2#3{{\setbox0=\hbox{$#1{#2#3}{\int}$}
     \vcenter{\hbox{$#2#3$}}\kern-.5\wd0}}

\def\dashint{\Xint-}

\def\YYint#1#2#3{{\setbox0=\hbox{$#1{#2#3}{\iint}$}
    \vcenter{\hbox{$#2#3$}}\kern-.51\wd0}}

\renewcommand{\div}{\operatorname{div}}

\DeclareMathOperator*{\esssup}{ess\,sup\,}
\DeclareMathOperator*{\essinf}{ess\,inf\,}

\DeclareMathOperator{\diam}{diam}

\DeclareMathOperator\supp{supp}

\usepackage{natbib}
\setcitestyle{numbers,open={[},close={]},comma}

\title[Characterization of intrinsic parabolic Harnack inequalities]{Characterization of intrinsic parabolic Harnack inequalities for doubly nonlinear equations of slow diffusion type
}
\author{Theo Elenius}
\address{Department of Mathematics, Aalto University, P.O. BOX 11100, 00076 Aalto, Finland}
\email{theo.elenius@aalto.fi}

\begin{document}
\bibliographystyle{plainnat}
\begin{abstract}
We characterize metric measure spaces that satisfy intrinsic parabolic Harnack inequalities for doubly nonlinear equations of slow diffusion type. The considered parabolic Harnack inequalities are intrinsic in the sense that the waiting time depends on the solution. Our main results shows that in a metric measure space with a doubling measure, such an intrinsic parabolic Harnack inequality holds if and only if a Poincaré inequality holds. We thus show that estimates in scale and function dependent intrinsic geometries can characterize scale and location invariant functional inequalities. 
\end{abstract}

\subjclass[2020]{35K55, 46E36}
\keywords{intrinsic parabolic Harnack inequality, Poincar\'{e} inequality, doubly nonlinear equation, slow diffusion, metric measure space}

\maketitle

\section{Introduction}
Characterizations of parabolic Harnack inequalities for linear equations have been extensively studied, beginning with the work of Grigor'yan \cite{Grigoryan1991} and Saloff-Coste \cite{Saloff-Coste1992,Saloff-Coste2002}, who independently observed that a parabolic Harnack inequality for the heat equation is equivalent to the conjunction of volume doubling and a $(1,2)$-Poincaré inequality on Riemannian manifolds. Corresponding characterizations were shown, for example, for locally finite graphs \cite{Delmotte} and locally compact metric spaces with a Dirichlet form, see \cite{Sturm} and \cite{, BarlowBassKumagai}. 

In recent joint work with Kinnunen \cite{EleniusKinnunen} we have developed a nonlinear analytic approach to the characterization of parabolic Harnack inequalities and have shown that the conjunction of volume doubling and a $(1,p)$-Poincaré inequality is equivalent to a parabolic Harnack inequality for the doubly nonlinear equation
\begin{equation}
    \label{eq: trudinger}
     \partial_t (\lvert u\rvert^{p-2} u)-\div (\lvert \nabla u\rvert^{p-2}\nabla u )=0, \quad 1<p<\infty, 
\end{equation}

In this paper, we consider the characterization of parabolic Harnack estimates for the doubly nonlinear equation
\begin{equation}
    \label{eq: PDE}
     \partial_t (\lvert u\rvert^{q-1} u)-\div (\lvert \nabla u\rvert^{p-2}\nabla u )=0, \quad 1<p<\infty, \, q>0,
\end{equation}
in the slow diffusion regime $q+1<p$. The limiting case $q+1=p$ corresponds to \eqref{eq: trudinger}. Examples of equations covered by \eqref{eq: PDE} include the parabolic $p$-Laplace, i.e. the gradient flow of the $p$-Sobolev energy, 
corresponding to $q=1$, and the porous medium equation
\begin{equation*}
    \partial_t v - \Delta (v^m)=0, \quad 0<m<\infty
\end{equation*}
for the choices $p=2$, $q=\frac{1}{m}$ and change of variables $v^m=u$. Recently, the equation \eqref{eq: PDE} has garnered interest in the context of Riemannian manifolds. Upper bounds for solutions and their relation to Sobolev and Faber-Krahn type inequalities have been investigated in \cite{GrigoryanSurig2025,GrigoryanSunSurig,sürig2026upperbounds}, finite propagation speed has been shown in \cite{GrigoryanSurig2024, GrigoryanSurig2024Pisa} and an existence result has been given in \cite{sürig2026existence}.

The behavior of solutions in the slow diffusion regime is markedly different compared to \eqref{eq: trudinger}; disturbances propagate at finite speed and a uniform parabolic Harnack inequality does not hold. One of the key structural differences causing this discrepancy is the scaling of the equation. Note that if $u$ is a solution to \eqref{eq: PDE} and $a>0$, then $au$ is not a solution to \eqref{eq: PDE} for $q+1\neq p$. Instead,  the function $w(x,t)=au(x,a^{p-1-q}t)$ is a solution.

Intrinsic scaling methods have been extensively developed to study regularity for equations of the form of \eqref{eq: PDE}, see the monographs \cite{DiBenedetto1993,DiBenedettoGIanazzaVespri2012,Urbano}. The idea is to establish homogeneous estimates in intrinsic cylinders, which are cylinders whose time length depend on the operator and  solution itself, in a way which respects the natural scaling of the equation. This idea has been used to obtain, for instance, Harnack estimates, Hölder regularity and gradient higher integrability.

The intrinsic parabolic Harnack  inequality asserts that there exists constants $c_1$ and $c_2$ such that whenever $u$ is a nonnegative solution of \eqref{eq: PDE} in $Q=B(x_0,4r)\times(t_0-c_1u(x_0,t_0)^{q+1-p} (4r)^p,t_0+c_1u(x_0,t_0)^{q+1-p}(4r)^p)$ then
\begin{equation}
    \label{eq: intrinsic PHI}
    u(x_0,t_0)\leq c_2\essinf_{Q^+}u 
\end{equation}
where $Q^+=B(x_0,r)\times(t_0+c_1/2 u(x_0,t_0)^{q+1-p}r^p,t_0+c_1 u(x_0,t_0)^{q+1-p} r^p)$.

Our main result shows that in a metric measure space with a doubling measure, the intrinsic parabolic Harnack inequality \eqref{eq: intrinsic PHI} holds if and only if a $(1,p)$-Poincaré inequality holds. The main novelty of our work is to show that a Harnack inequality in a solution and location dependent geometry suffices to characterize a scale and location invariant Poincaré inequality.  More precisely, we prove the following theorem.
\begin{theorem}
\label{thm: main thm}
Let $1<p<\infty$ and $0<q<\infty$ with $q+1<p$. Let $(X,d,\mu)$ be a metric measure space such that $X$ is geodesic and complete, $\mu$ is doubling and for $q\neq  1$ assume that the Rellich--Kondrachov property in \Cref{def: rellich kondrachov property} holds. Then the following conditions are equivalent:
    \begin{itemize}
        \item[(1)] A $(1,p)$-Poincar\'e inequality holds in $(X,d,\mu)$, see \Cref{def: poincare}.
        \item[(2)] The intrinsic parabolic Harnack inequality holds in $(X,d,\mu)$, see \Cref{{def: PHI}}.
    \end{itemize}
    Moreover, the constants appearing in $(1)$ and $(2)$ are quantitative in terms of those in $(2)$ and $(1)$, respectively, and $p$, $q$, $c_\mu$.
\end{theorem}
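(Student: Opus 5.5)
The two implications are of very different nature, so I will treat them separately. For (1)$\Rightarrow$(2) I will rely on the known intrinsic Harnack theory for slow diffusion doubly nonlinear equations. In a doubling metric measure space supporting a $(1,p)$-Poincaré inequality one has a Sobolev inequality, hence De Giorgi type energy estimates for truncations of solutions of \eqref{eq: PDE}. I will adapt the DiBenedetto--Gianazza--Vespri scheme, originally written for the parabolic $p$-Laplacian and doubly nonlinear equations in $\R^n$, to the metric setting. The scheme consists of three ingredients: expansion of positivity in intrinsic cylinders, an $L^1$-$L^\infty$ (or $L^r$-$L^\infty$) local bound, and a covering/continuity argument. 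The covering/continuity argument locates a point near $(x_0,t_0)$ where $u$ is comparable to its local supremum, and then spreads positivity forward in time. All constants depend only on $p,q,c_\mu$ and the Poincaré constants. I expect this direction to follow existing metric-space literature (Kinnunen, Kuusi and collaborators) with routine modifications, so I will mainly verify that the intrinsic waiting time $c_1u(x_0,t_0)^{q+1-p}r^p$ comes out as stated.

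For (2)$\Rightarrow$(1) I will follow the strategy of \cite{EleniusKinnunen}. The first step is to construct, in a ball $B=B(x_0,r)$, a solution via a separation of variables ansatz. In the slow diffusion case I will take $u(x,t)=(T-t)_+^{-1/(p-1-q)}\,v(x)$ or the forward version $u(x,t)=t^{-1/(p-1-q)}v(x)$, the latter being a friendly giant type solution. Here $v$ solves the nonlinear eigenvalue-type problem $-\Delta_p v=\lambda v^{q}$ in $B$ with zero boundary values. Equivalently, $v$ minimizes the Rayleigh-type quotient $\int|\nabla v|^p\dmu / (\int |v|^{q+1}\dmu)^{p/(q+1)}$. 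Existence of a minimizer is where compactness is needed, which is why the Rellich--Kondrachov property is assumed for $q\neq1$. For $q=1$ and $p=2$ a linear argument should work; otherwise the case $q=1$ may be handled by a known compactness result for $L^2$.

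Applying the intrinsic Harnack inequality to this separable solution, with time scale matched to $u^{q+1-p}r^p$, shows that $v$ is comparable to its supremum on a smaller ball. It also gives a quantitative relation between $\lambda$, $r^{-p}$ and the size of $v$. Combining these yields a scale-invariant lower bound for the first ``eigenvalue''. This is a Faber--Krahn/Dirichlet-Poincaré type inequality
\[
\int_B |u|^{q+1}\dmu \le C\,\mu(B)^{1-\frac{q+1}{p}} r^{\frac{p(q+1)}{p}}\Big(\int_B |\nabla u|^p\dmu\Big)^{\frac{q+1}{p}} ,
\]
valid for compactly supported functions in $B$, with the normalization of the exponents fixed by homogeneity.

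The final step is to upgrade this Dirichlet inequality to the Neumann $(1,p)$-Poincaré inequality. I will use the Harnack inequality once more to get a Hölder/oscillation or positivity estimate that rules out ``bottlenecks''. Concretely, given $f$ on $2B$ with small gradient, I run the equation with data built from $f$ and compare. Alternatively I use the approach of \cite{EleniusKinnunen}, deriving a weak Neumann Poincaré inequality in geodesic spaces and removing the enlargement via geodesicity. I expect this Dirichlet-to-Neumann passage, together with controlling the intrinsic (solution-dependent) cylinders so that the constants are uniform in scale and location, to be the main obstacle. The non-homogeneity $q+1\neq p$ means the inequality obtained from a single solution mixes $L^{q+1}$ and $L^p$ norms, and I must use the scaling $au(x,a^{p-1-q}t)$ to normalize $\sup v=1$ before extracting a homogeneous estimate.
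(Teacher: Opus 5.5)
Your direction (1)$\Rightarrow$(2) is essentially the paper's Section 4. There, local boundedness, propagation of positivity in measure, a shrinking lemma, a De Giorgi-type lemma and expansion of positivity are combined with a Krylov--Safonov-type argument that locates a near-maximum point in intrinsic cylinders.

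The gap is in (2)$\Rightarrow$(1), at the step where the intrinsic Harnack inequality, applied to the separable solution, is supposed to give a scale-invariant lower bound on the eigenvalue. Carry out the computation. Only the decaying profile is a solution: $(T-t)_+^{-1/(p-1-q)}v$ would require $\Delta_p v=\lambda v^q$, which has no nontrivial nonnegative solution vanishing on $\partial B$. For $u(x,t)=(\kappa\lambda t)^{-1/(p-1-q)}v(x)$ with $\kappa=(p-1-q)/q$ you get $u(x,t_0)^{q+1-p}=\kappa\lambda t_0\,v(x)^{q+1-p}$. Admissibility of the backward cylinder in $B\times(0,\infty)$ forces $B(x,4\rho)\subset B$ and $c_1\kappa\lambda v(x)^{q+1-p}(4\rho)^p<1$. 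Hence all times in $Q^+$ lie in $(t_0,(1+4^{-p})t_0)$, where the time factor $(t/t_0)^{-1/(p-1-q)}$ is comparable to $1$. The Harnack inequality therefore reduces to $v(x)\le c\,\essinf_{B(x,\rho)}v$ for $\rho\lesssim\min\{\operatorname{dist}(x,X\setminus B),(\lambda v(x)^{q+1-p})^{-1/p}\}$.

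That is its entire content: an interior elliptic Harnack inequality for $v$, which constrains $\lambda$ in no way. No admissible cylinder reaches $\partial B$, where the Dirichlet condition lives, and a small $\lambda$ only enlarges the range of admissible scales. So nothing forces $\lambda\gtrsim r^{-p}$ under the normalization $\sup v=1$, and the Dirichlet inequality, the heart of this direction, is left unproved.

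The missing idea is a global conservation law that forces decay. The paper solves the Cauchy problem on all of $X$ with initial datum $f\in N^{1,p}_0(B)$ (\Cref{thm: Cauchy existence}), so that $\int_X u(t)^q\dmu\le\int_X f^q\dmu$ and the energy does not increase. Testing with $\zeta_h f$ gives $\dashint_B f^{q+1}\dmu\le c\tau\dashint_B g_f^p\dmu+\dashint_B u(\tau)^{q+1}\dmu$. With the intrinsic choice $\tau\simeq(\sigma r)^p(\dashint_B f^{q+1}\dmu)^{(q+1-p)/(q+1)}$, the Harnack cylinder at a near-maximum point $y\in B(x_0,\sigma r)$ of $u(\tau)$ is admissible unless that maximum is already small. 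The supremum is then controlled by the $L^q$-average of $u$ over $B(y,2\sigma r)$ at later times. By mass conservation and Hölder this is at most $c(\mu(B)/\mu(B(x_0,\sigma r)))^{(q+1)/q}\dashint_B f^{q+1}\dmu$. The reverse doubling bound \eqref{eq: doubling implies radii control from above} makes this at most $\frac12\dashint_B f^{q+1}\dmu$ for large $\sigma$, and absorbing this term yields the Dirichlet inequality.

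A second problem is compactness. Your eigenfunction needs a compact embedding $N^{1,p}_0(B)\hookrightarrow L^{q+1}(B)$ for every $q$, including $q=1$. There the theorem assumes no Rellich--Kondrachov property, and such compactness would only follow from the Poincaré inequality you are trying to prove. In the paper, compactness is used only to identify the weak limit of $u_k^q$ when $q\neq1$; for $q=1$ the time term is linear and weak convergence suffices.

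Finally, the Dirichlet-to-Neumann step via \cite[Theorem 7.6]{EleniusKinnunen} needs the elliptic Harnack inequality for $p$-harmonic functions. This follows from (2) because a $p$-harmonic function is a time-independent solution, so every intrinsic cylinder is admissible.
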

The overall strategy of the proof of \Cref{thm: main thm} is an adaptation of the argument of \cite{EleniusKinnunen} to the intrinsic setting. We begin with the proof of the implication $(1)\implies (2)$, which is presented in \Cref{section: VD + PI implies PHI}. The proof is based on an expansion of positivity argument, which is adapted to include the intrinsic geometry. For the corresponding result in the Euclidean setting we refer to \cite{DiBenedettoGianazzaVespri2008} and \cite{BogeleinHeranSchatzlerSinger}. The intrinsic parabolic Harnack inequality for $q=1$ has previously been shown in the metric setting in \cite{Heran}. 

To show that the intrinsic parabolic Harnack inequality implies a $(1,p)$-Poincaré inequality, an existence result for a Cauchy problem related to \eqref{eq: PDE} is needed. A proof of such an existence result is given in \Cref{section: existence}, using the minimizing movements scheme. This is the only place we apply the Rellich--Kondrachov property which is assumed in \Cref{thm: main thm}. It appears in relation to the nonlinearity of the time derivative whenever $q\neq 1$. We point out that the Rellich--Kondrachov property holds independently of volume doubling and a $(1,p)$-Poincaré inequalities, for instance, on Riemannian manifolds.

The proof of the implication $(2)\implies(1)$ is given in \Cref{section: PHI implies Poincare}. The proof proceeds by first showing that the intrinsic Harnack inequality implies a Poincaré-type inequality for functions with zero boundary values. To show this, we adapt the argument of \cite[Proposition 7.1]{EleniusKinnunen} to take into account the intrinsic scaling of the parabolic Harnack inequality. The following step is to apply a result of \cite{EleniusKinnunen} which states that a Poincaré-type inequality for functions with zero boundary values, the elliptic Harnack inequality for $p$-harmonic functions and volume doubling imply a $(1,p)$-Poincaré inequality.

We point out that \Cref{thm: main thm} assumes that the measure is doubling, whereas the characterization in \cite{EleniusKinnunen} and the classical linear theory show that a parabolic Harnack inequality implies volume doubling. To the best of our knowledge, all proofs of the doubling condition starting from the parabolic Harnack inequality rely in some way on iterating the parabolic Harnack inequality at small scales to move forward in time and eventually be able to apply the parabolic Harnack inequality at a larger scale.  Such arguments are not immediately available for the intrinsic parabolic Harnack inequality, since the solution may decay during the iteration, while the the backward time interval required to apply the parabolic Harnack inequality depends on $u^{q+1-p}$.  It is thus an interesting open problem to determine whether the validity of the intrinsic parabolic Harnack inequality implies volume doubling. 

There are several interesting consequences of \Cref{thm: main thm}. In particular, \Cref{thm: main thm} shows that if the intrinsic parabolic Harnack inequality holds for some $q<p-1$ then it holds for every $q<p-1$. Together with \cite[Theorem 1.1]{EleniusKinnunen} we also obtain the equivalence of the intrinsic parabolic Harnack inequality for any $q<p-1$ with the parabolic Harnack inequality for $q=p-1$. This shows a somewhat unexpected relation between parabolic Harnack estimates for different equations. The stability of $(1,p)$-Poincaré inequalities, see \cite{KeithZhong}, and \Cref{thm: main thm} also imply that the intrinsic parabolic Harnack inequality is stable with respect to the exponent $p$.

\section{Preliminaries}
\label{section: prelim}
\subsection{Geodesic spaces and doubling measures}
The exposition of the preliminary materials in this article largely follows \cite{EleniusKinnunen}. We consider a metric measure space $(X,d,\mu)$, where $X$ is a complete geodesic metric space with metric $d$, and $\mu$ is a Borel measure on $X$. A metric space $X$ is geodesic, if every pair of points in $X$ can be connected by a path with length equal to the distance between the points. We assume that 
the measure $\mu$ is nontrivial in the sense that $0<\mu(B(x_0,r))<\infty$
for every open ball $B(x_0,r)=\{x\in X:d(x,x_0)<r\}$, where $x_0\in X$ and $0<r<\infty$. The above assumptions will be in force throughout the rest of the article. Any additional assumptions  will be explicitly stated when they are imposed.

The measure $\mu$ is doubling if there exists a constant $c_\mu\geq 1$ such that 
\begin{equation}
    \label{eq: doubling condition}
    \mu(B(x_0,2r))\leq c_\mu \mu(B(x_0,r))
\end{equation}
for every $x_0\in X$ and $r>0$.
If $\mu$ is doubling, then for $Q=\log_2c_\mu$ we have
\begin{equation}
    \label{eq: doubling implies radii control from below}
    \frac{\mu(B(x_0,r))}{\mu(B(x_0,R))}\geq \frac{1}{c_\mu} \left(\frac{r}{R}\right)^Q
\end{equation}
for every $x_0\in X$ and $0<r<R<\infty$. If $X$ is connected and $\mu$ is doubling, then there exist constants $c>0$ and $\alpha>0$, depending only on $c_\mu$, such that
\begin{equation}
    \label{eq: doubling implies radii control from above}
    \frac{\mu(B(x_0,r))}{\mu(B(x_0,R))}\leq  c \left(\frac{r}{R}\right)^\alpha
\end{equation}
for every $x_0\in X$ and $0<r<R<\frac{1}{3}\diam X$. In particular, this holds in a geodesic space.
In a geodesic space, the doubling condition improves to the following annular decay property.
If $X$ is geodesic and $\mu$ is doubling, then there exist constants $c>0$ and $\beta>0$, depending only on $c_\mu$, such that 
\begin{equation}\label{eq: geodesic doubling becomes annular decay}
    \mu(B(x_0,r)\backslash B(x_0,(1-\delta)r))\leq c \delta^\beta \mu(B(x_0,r))
\end{equation}
for every $x_0\in X$, $r>0$ and $0<\delta<1$.

\subsection{Upper gradients and Sobolev spaces}
A nonnegative Borel function $g$ on $X$ is called an upper gradient
of an extended real valued function $u$ on $X$ if for all paths $\gamma$ joining points $x$ and $y$ in $\Omega$ we have
\begin{equation} \label{ug-cond}
|u(x)-u(y)|\le \int_\gamma g\,ds,
\end{equation}
whenever both $u(x)$ and $u(y)$ are finite, and $\int_\gamma g\,ds=\infty$ otherwise. 
We say that $g$ is a $p$-weak upper gradient of $u$
if the collection $\Gamma$ of rectifiable curves for which \eqref{ug-cond} fails has zero $p$-modulus, that is, there exists a nonnegative Borel function $f\in L^p(X;\mu)$
such that the integral $\int_\gamma f\, ds$ is infinite for every $\gamma\in \Gamma$. 

Let $\Omega$ be an open subset of $X$ and $1\leq p<\infty$. Let 
\begin{equation}
    \label{eq: newtonian space norm def}
    \Vert u\Vert_{N^{1,p}(\Omega)}=\Vert u\Vert_{L^{p}(\Omega)}+\inf\Vert g\Vert_{L^{p}(\Omega)},
\end{equation}
where the infimum is taken over all upper gradients $g$ of $u$ and 
define $\widetilde{N}^{1,p}(\Omega)$ as the space of functions $u$ for which $\Vert u\Vert_{N^{1,p}(\Omega)}<\infty$.
The Newton--Sobolev space is defined as the quotient space
\[
N^{1,p}(\Omega)=\lbrace u:\Vert u\Vert_{N^{1,p}(\Omega)}<\infty\rbrace/\sim,
\]
where $u\sim v$ if and only if $\Vert u-v\Vert_{N^{1,p}(\Omega)}=0$. We define $N^{1,p}_0(\Omega)$ as the space of functions $u\in N^{1,p}(X)$ that vanish $p$-quasieverywhere in $X\setminus\Omega$, see \cite[Definition 1.24, Definition 2.34]{BjornBjorn2011}. 

If $u$ has an upper gradient $g\in L^p(\Omega)$, there exists a unique minimal $p$-weak upper gradient $g_u\in L^p(\Omega)$ satisfying
$g_u\le g$ $\mu$-almost everywhere for all $p$-weak upper gradients $g\in L^p(\Omega)$ of $u$, see \cite[Theorem 6.3.20]{HeinonenKoskelaShanmugalingam}. Moreover, the minimal $p$-weak upper gradient is unique up to sets of measure zero.
For $u\in N^{1,p}(\Omega)$ we have
\[
\Vert u\Vert_{N^{1,p}(\Omega)}=\Vert u\Vert_{L^{p}(\Omega)}+\Vert g_u\Vert_{L^{p}(\Omega)},
\]
where $g_u$ is the minimal $p$-weak upper gradient of $u$.

We collect some calculus rules for upper gradients on metric measure spaces. 
Let $u,v\in N^{1,p}(\Omega)$ and let $g_{u},\,g_{v}\in L^{p}(\Omega)$ be their minimal $p$-weak upper gradients. 
Then $g_{u}+g_{v}$ and $\vert u\vert g_{v}+\vert v\vert g_{u}$ are $p$-weak upper gradients for $u+v$ and $uv$, respectively, see \cite[Theorem 2.15]{BjornBjorn2011}.
Let  $\eta$ be Lipschitz function on $\Omega$ with $0\leq\eta\leq 1$ and consider $w=u+\eta(u-v)=(1-\eta)u+\eta v$. Then
$(1-\eta)g_{u}+\eta g_{v}+\vert v-u\vert g_{\eta}$
is a $p$-weak upper gradient of $w$, see \cite[Theorem 2.18]{BjornBjorn2011}.
Moreover, $g_{u}=g_{v}$, $\mu$-almost everywhere on the set $\lbrace x\in X: u(x)=v(x)\rbrace$.
In particular, if $c\in\mathbb{R}$ is a constant, then $g_{u}=0$ $\mu-$almost everywhere on the set $\lbrace x\in X: u(x)=c\rbrace$, see \cite[Corollary 2.21]{BjornBjorn2011}.

\subsection{Poincaré inequalities}
The integral average over a $\mu$-measurable subset $A$ of $X$, with $0<\mu(A)<\infty$, is denoted by
\[
u_{A}=\dashint_{A} u\dmu
=\frac{1}{\mu(A)}\int_{A}u\dmu.
\]

\begin{definition}\label{def: poincare}
Let $1\le p,q<\infty$. We say that a  $(q,p)$-Poincaré inequality holds in $(X,d,\mu)$ if there exist constants $c_P>0$ and $\tau\geq 1$ such that 
\begin{equation*}
    \biggl(\dashint_{B(x_0,r)}\lvert u-u_{B(x_0,r)}\rvert^q\dmu\biggr)^{\frac{1}{q}}\leq c_P r \biggl(\dashint_{B(x_0,\tau r)}  g_u^p\dmu\biggr)^\frac{1}{p}
\end{equation*}
for every $u\in N^{1,p}(B(x_0,\tau r))$, where $x_0\in X$ and $0<r<\infty$.
\end{definition}
We note that, in a geodesic space, a  $(q,p)$-Poincaré inequality with $\tau>1$ implies a  $(q,p)$-Poincaré inequality with $\tau=1$, see \cite[Theorem 4.34]{BjornBjorn2011}. When applying the Poincaré inequality in  \Cref{section: VD + PI implies PHI} we will apply it with $\tau=1$. However, when proving a $(1,p)$-Poincaré inequality in \Cref{section: PHI implies Poincare} we will prove it for a parameter $\tau>1$.

If $\mu$ is doubling and a $(1,p$)-Poincaré inequality holds in $(X,d,\mu)$ with $p<Q$, then a $(p^*,p)$-Poincaré inequality holds, where $p^*=\frac{pQ}{Q-p}$. Moreover, if $p\geq Q$ then a $(p^*,p)$-Poincaré inequality holds for every $1\leq p^*<\infty$, 
see \cite[Theorem 9.1.15]{HeinonenKoskelaShanmugalingam} or \cite[Theorem 4.16]{BjornBjorn2011}. Moreover, there exists $q<p$ depending on $c_\mu$, $C_p$ and $p$, such that a $(1,q)$-Poincaré inequality holds in $(X,d,\mu)$, see \cite{KeithZhong}. 

For the following result for functions that vanish on a large portion of a set we refer to \cite[Lemma 2.1]{KinnunenShanmugalingam}.
\begin{lemma}
\label{eq: Poincaré for function zero on large set}
Let $1\le p<\infty$ and $1<q<\infty$. Assume that $\mu$ is doubling and that a  $(q,p)$-Poincaré inequality holds in $(X,d,\mu)$. Let $u\in N^{1,p}(X)$ and assume that there exists $0<\gamma<1$ such that $\mu(\{ \lvert u\rvert >0\}\cap B(x_0,r)) \leq \gamma \mu(B(x_0,r))$, where $x_0\in X$ and $0<r<\infty$. Then there exists a constant $c$, depending on $c_\mu$, $c_P$, $p$, $q$ and $\gamma$, such that
\begin{equation*}
    \biggl( \dashint_{B(x_0,r)} \lvert u\rvert^{q} \dmu \biggr)
^\frac{1}{q}\leq c r \biggl( \dashint_{B(x_0, \tau r)} g_u^p\dmu \biggr)^\frac{1}{p}.
\end{equation*}
\end{lemma}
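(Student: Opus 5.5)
The plan is to combine the $(q,p)$-Poincaré inequality on $B=B(x_0,r)$ with an elementary estimate of the mean $u_B$, using the fact that $u$ vanishes on a set of proportion at least $1-\gamma$. By the triangle inequality in $L^q(B)$,
\begin{equation*}
\biggl(\dashint_B \lvert u\rvert^q\dmu\biggr)^{\frac1q}\le \biggl(\dashint_B \lvert u-u_B\rvert^q\dmu\biggr)^{\frac1q}+\lvert u_B\rvert .
\end{equation*}
The Poincaré inequality bounds the first term by $c_P r\bigl(\dashint_{B(x_0,\tau r)}g_u^p\dmu\bigr)^{1/p}$. It therefore remains to control $\lvert u_B\rvert$ by a multiple of the left-hand side with a factor strictly less than one, and then absorb that term.

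To estimate the mean, let $E=\{\lvert u\rvert>0\}\cap B$, so that $\mu(E)\le\gamma\mu(B)$. Since $u=0$ on $B\setminus E$, Hölder's inequality gives
\begin{equation*}
\lvert u_B\rvert\le \frac{1}{\mu(B)}\int_E\lvert u\rvert\dmu\le \Bigl(\frac{\mu(E)}{\mu(B)}\Bigr)^{1-\frac1q}\biggl(\dashint_B\lvert u\rvert^q\dmu\biggr)^{\frac1q}\le \gamma^{1-\frac1q}\biggl(\dashint_B\lvert u\rvert^q\dmu\biggr)^{\frac1q}.
\end{equation*}
Because $q>1$ and $\gamma<1$, we have $\gamma^{1-1/q}<1$. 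The left-hand side is finite, since $u\in N^{1,p}(X)$ and, by the Poincaré inequality itself, $u\in L^q(B)$. Absorbing therefore yields the claim with $c=c_P/(1-\gamma^{1-1/q})$.

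An alternative route, closer to the vanishing-set argument in \cite{KinnunenShanmugalingam}, starts from the fact that $u-u_B=-u_B$ on $B\setminus E$. This gives $\lvert u_B\rvert(1-\gamma)^{1/q}\le(\dashint_B\lvert u-u_B\rvert^q\dmu)^{1/q}$, which also closes the argument and avoids the absorption step. In either version the only subtle point is the use of $q>1$ (or the measure of the zero set) to get a constant strictly less than one. The dependence of $c$ is only on $c_P,q,\gamma$, which is consistent with the stated dependence; doubling is not really needed beyond the hypotheses.
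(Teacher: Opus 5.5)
Your proof is correct and is essentially the argument of \cite[Lemma 2.1]{KinnunenShanmugalingam}, which the paper cites rather than reproving. Both use the triangle inequality in $L^q(B)$, the Poincaré inequality for $u-u_B$, Hölder's inequality on the support to get $\lvert u_B\rvert\le\gamma^{1-1/q}\bigl(\dashint_B\lvert u\rvert^q\dmu\bigr)^{1/q}$, and absorption, which is justified as you note by $u\in L^q(B)$. Your side remarks are also accurate: the zero-set variant avoids absorption and even works for $q=1$, and doubling is not needed once the $(q,p)$-Poincaré inequality is assumed directly.
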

The previous lemma implies the following Sobolev inequality for functions with zero boundary values, see \cite{KinnunenShanmugalingam}.
\begin{lemma}
\label{lemma: sobolev from poincaré}
Let $1\le p<\infty$ and $1<q<\infty$. Assume that $\mu$ is doubling and that a $(q,p)$-Poincaré inequality holds in $(X,d,\mu)$. Then there exists a constant $c$, depending on $c_\mu$, $c_P$, $p$ and $q$, such that
\begin{equation*}
    \biggl(\dashint_{B(x_0,r)} \lvert u\rvert^q \dmu\biggr)^\frac{1}{q} \leq cr \biggl(\dashint_{B(x_0,r)} g_u^p \dmu\biggr)^\frac{1}{p}
\end{equation*}
for every $u\in N^{1,p}_0(B(x_0,r))$, where $x_0\in X$ and $0<r\leq\frac13\diam(X)$.
\end{lemma}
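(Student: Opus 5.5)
The plan is to derive the Sobolev inequality for functions with zero boundary values from \Cref{eq: Poincaré for function zero on large set}. For this I extend $u$ by zero outside $B(x_0,r)$ and apply that lemma on a larger ball in which the zero set occupies a fixed proportion. Since $u\in N^{1,p}_0(B(x_0,r))$, the zero extension belongs to $N^{1,p}(X)$. Its minimal $p$-weak upper gradient vanishes $\mu$-almost everywhere on $X\setminus B(x_0,r)$, because $u=0$ quasieverywhere there and $g_u=0$ on level sets of constants.

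First step: choose the enlarged ball. I take $B'=B(x_0,2r)$ and check that $\mu(B(x_0,2r)\setminus B(x_0,r))\ge \delta\,\mu(B(x_0,2r))$ for some $\delta>0$ depending only on $c_\mu$. This is where $r\le\frac13\diam X$ is used. Since $X$ is geodesic and $\diam X\ge 3r$, there is a point $y$ with $d(x_0,y)=\frac32 r$. Then $B(y,r/2)\subset B(x_0,2r)\setminus B(x_0,r)$ and $B(x_0,2r)\subset B(y,4r)$, so doubling gives $\mu(B(y,r/2))\ge c_\mu^{-3}\mu(B(x_0,2r))$. Consequently
\[
\mu(\{\lvert u\rvert>0\}\cap B(x_0,2r))\le \mu(B(x_0,r))\le (1-c_\mu^{-3})\,\mu(B(x_0,2r)),
\]
so the lemma applies with $\gamma=1-c_\mu^{-3}$.

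Second step: apply the lemma and return to the original ball. \Cref{eq: Poincaré for function zero on large set} gives
\[
\biggl(\dashint_{B(x_0,2r)}\lvert u\rvert^q\dmu\biggr)^{\frac1q}\le c\,2r\biggl(\dashint_{B(x_0,2\tau r)}g_u^p\dmu\biggr)^{\frac1p}.
\]
Both integrands vanish outside $B(x_0,r)$, so each integral can be rewritten over $B(x_0,r)$ at the cost of measure ratios. On the left, $\dashint_{B(x_0,2r)}\lvert u\rvert^q\dmu\ge c_\mu^{-1}\dashint_{B(x_0,r)}\lvert u\rvert^q\dmu$. On the right, $\dashint_{B(x_0,2\tau r)}g_u^p\dmu\le \dashint_{B(x_0,r)}g_u^p\dmu$, since $B(x_0,r)\subset B(x_0,2\tau r)$ and $\mu(B(x_0,2\tau r))\ge\mu(B(x_0,r))$. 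Combining these yields the claim with a constant depending on $c_\mu,c_P,p,q$; the dependence on $\tau$ either does not appear or is controlled by the data.

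The only genuinely delicate point is the measure-theoretic one in the first step: the complement of $B(x_0,r)$ must carry a definite fraction of the larger ball. This fails without a diameter restriction, for instance when $B(x_0,r)=X$. The geodesic property together with $r\le\frac13\diam X$ supplies the point $y$ at distance $\frac32 r$, and doubling converts this into the uniform proportion $\gamma$. A minor technical check is that the zero extension is admissible in \Cref{eq: Poincaré for function zero on large set}. This is standard: by definition $N^{1,p}_0(B(x_0,r))\subset N^{1,p}(X)$, and the vanishing of $g_u$ off the ball follows from the calculus rule that $g_u=0$ almost everywhere on $\{u=c\}$.
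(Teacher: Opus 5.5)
Your proof is correct and follows essentially the route the paper intends. The paper gives no separate proof; it says the statement follows from the preceding lemma and cites \cite{KinnunenShanmugalingam}, where it is proved exactly as you do: extend $u$ by zero, use $r\le\frac13\diam(X)$ to see that $B(x_0,2r)\setminus B(x_0,r)$ carries a fixed proportion of $\mu(B(x_0,2r))$, and apply the lemma for functions vanishing on a large set on $B(x_0,2r)$. One small point: a point $y$ with $d(x_0,y)=\frac32 r$ does exist, because a complete doubling space is proper, so $d(x_0,\cdot)$ attains values at least $\frac32 r$, and a geodesic from $x_0$ passes through every intermediate distance. Alternatively, taking $d(x_0,y)=\frac54 r$ and the ball $B(y,r/4)$ avoids the issue entirely.
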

\subsection{Parabolic Sobolev spaces}
We briefly discuss time dependent Sobolev spaces that are relevant to the doubly nonlinear equation \eqref{eq: PDE}.
\begin{definition}\label{def.pnewtonian}
Let $\Omega$ be an open subset of $X$, $0<T<\infty$ and $1\le p<\infty$. 
The parabolic Newton--Sobolev space $L^{p}(0, T;N^{1,p}(\Omega))$ consists of  strongly measurable functions 
$u:(0,T)\to N^{1,p}(\Omega)$ with the norm
\begin{equation*}
\Vert u\Vert_{L^{p}(0, T;N^{1,p}(\Omega))}
=\biggl(\int_{0}^{T}\Vert u(t)\Vert_{N^{1,p}(\Omega)}^{p}\dt\biggr)^{\frac1p}<\infty.
\end{equation*}
The integration over time is taken with respect to the one-dimensional Lebesgue measure.

It is also possible to consider parabolic Newton--Sobolev spaces over other time intervals, for example, $L^{p}(t_1,t_2;N^{1,p}(\Omega))$, $-\infty<t_1<t_2<\infty$, or $L^{p}(-\infty,\infty;N^{1,p}(\Omega))$.
\end{definition}

The product measure in the space $X\times (0,T)$, $0<T<\infty$, is denoted by $\nu$.
If $u\in L^{p}(0, T;N^{1,p}(\Omega))$, it is immediate that $u\in L^{p}(0, T;L^{p}(\Omega))$. The space-time cylinder is denoted by $\Omega_{T}=\Omega\times(0, T)$. For $u\in L^{p}(0, T;L^{p}(\Omega))$, there exists a $\nu$-measurable representative $u:\Omega_T\to[-\infty,\infty]$ such that $u(t)=u(\cdot,t)$ for almost every $0<t<T$ and
\[
\int_{\Omega_T}|u|^p\dnu
=\int_0^T\int_\Omega|u(x,t)|^p\dmu\dt
=\int_0^T\Vert u(t)\Vert_{L^p(\Omega)}^p\dt.
\]
If $u\in L^{p}(0, T;N^{1,p}(\Omega))$, we may conclude that the map $t\mapsto g_{u(t)}$ belongs to $L^{p}(0, T;L^{p}(\Omega))$. Moreover, we find a $\nu$-measurable representative
$g_u:\Omega_T\to[-\infty,\infty]$ such that $g_u(t)=g_u(\cdot,t)$ for almost every $0<t<T$ and 
\[
\int_{\Omega_T}g^p\dnu
=\int_0^T\int_\Omega g_u(x,t)^p\dmu\dt
=\int_0^T\Vert g_u(t)\Vert_{L^p(\Omega)}^p\dt.
\]
We shall implicitly identify $u$ and $g_u$ with their $\nu$-measurable representatives.

The parabolic Newton--Sobolev space with zero lateral boundary values is defined by saying that $u\in L^p(0,T; N^{1,p}_0(\Omega))$ if $u\in L^{p}(0, T;N^{1,p}(\Omega))$ and $u(t)\in N^{1,p}_0(\Omega)$ for almost every $0<t<T$. The following is a parabolic Sobolev embedding for functions $u\in L^p(0,T; N^{1,p}_0(\Omega))\cap L^\infty(0,T;L^{q+1}(X))$.
\begin{lemma}
\label{lemma: par sob}
Let $1<p<\infty$ and $0<q<\infty$. Assume that $\mu$ is doubling and that a $(1,p)$-Poincaré inequality holds in $(X,d,\mu)$. Then there exists $\kappa>0$, depending only on $p$, and $c_\mu$, and a constant $c>0$ depending on $c_\mu$, $c_P$ and $p$ such that for every ball $B(x_0,r)$ with $r\leq\diam(X)/3$ we have
\begin{equation*}
    \int_{0}^{T}\dashint_{B(x_0,r)} \lvert u\rvert^{p+\kappa(q+1)}\dmu\dt \leq c r^p \int_{0}^{T}\dashint_{B(x_0,r)} g_u^p\dmu\dt \esssup_{0<t<T}\left( \dashint_{B(x_0,r)} u^{q+1}\dmu \right)^{\kappa},
\end{equation*}
for every $u\in L^p(0,T;N^{1,p}_0(B(x_0,r))\cap L^\infty(0,T;L^{q+1}(B(x_0,r))$.
\end{lemma}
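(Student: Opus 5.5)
We would prove \Cref{lemma: par sob} by combining, for almost every fixed time, the spatial Sobolev inequality of \Cref{lemma: sobolev from poincaré} with H\"older's inequality, and then integrating in time.

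\emph{Step 1: upgrade the Poincar\'e inequality.} By the self-improvement results recalled after \Cref{def: poincare}, the doubling property together with the $(1,p)$-Poincar\'e inequality yields a $(p^*,p)$-Poincar\'e inequality. If $p<Q$ we take $p^*=\frac{pQ}{Q-p}$. If $p\geq Q$ we replace $Q$ by any larger number, say $Q'=Q+p$, which depends only on $p$ and $c_\mu$; since \eqref{eq: doubling implies radii control from below} remains true with $Q'$, we may take $p^*=\frac{pQ'}{Q'-p}>p$. Then \Cref{lemma: sobolev from poincaré} gives, for almost every $t$,
\begin{equation*}
\biggl(\dashint_{B(x_0,r)} \lvert u(t)\rvert^{p^*}\dmu\biggr)^{\frac{1}{p^*}}\leq c r\biggl(\dashint_{B(x_0,r)} g_{u(t)}^p\dmu\biggr)^{\frac1p},
\end{equation*}
because $u(t)\in N^{1,p}_0(B(x_0,r))$ and $r\le \diam(X)/3$.

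\emph{Step 2: interpolation at fixed time.} Set $s=p+\kappa(q+1)$ and write $\lvert u\rvert^{s}=\lvert u\rvert^{p}\,\lvert u\rvert^{\kappa(q+1)}$. H\"older's inequality with exponents $\frac{p^*}{p}$ and its conjugate $\frac{p^*}{p^*-p}$ gives
\begin{equation*}
\dashint_{B} \lvert u\rvert^{s}\dmu\le \biggl(\dashint_B\lvert u\rvert^{p^*}\dmu\biggr)^{\frac{p}{p^*}}\biggl(\dashint_B \lvert u\rvert^{\kappa(q+1)\frac{p^*}{p^*-p}}\dmu\biggr)^{\frac{p^*-p}{p^*}}.
\end{equation*}
We choose $\kappa$ so that the second exponent equals $q+1$, that is, $\kappa=\frac{p^*-p}{p^*}=\frac{p}{Q'}$. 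This depends only on $p$ and $c_\mu$, as required. The second factor then becomes $\bigl(\dashint_B \lvert u\rvert^{q+1}\dmu\bigr)^{\kappa}$. By Step 1, the first factor is bounded by $c r^p\dashint_B g_{u(t)}^p\dmu$.

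\emph{Step 3: time integration.} We bound the second factor by its essential supremum over $0<t<T$ and integrate the resulting inequality over $(0,T)$. This gives the claim, where the right-hand side is read with $\lvert u\rvert^{q+1}$ (or with $u\ge 0$).

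The main point requiring care is the case $p\ge Q$. There the Sobolev exponent must be chosen finite and uniform, so that $\kappa$ depends only on $p$ and $c_\mu$; replacing $Q$ by $Q'>p$ handles this. The measurability in $t$ needed for the integration follows from the representatives discussed before the lemma.
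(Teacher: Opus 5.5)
Your proposal is correct and follows essentially the same route as the paper. At almost every fixed time, H\"older's inequality interpolates $\lvert u\rvert^{p+\kappa(q+1)}$ between $L^{p^*}$ and $L^{q+1}$ with $\kappa=1-p/p^*$, the Sobolev inequality for functions with zero boundary values bounds the $L^{p^*}$ factor, and the $L^{q+1}$ factor is replaced by its essential supremum before integrating in time. Your treatment of the case $p\ge Q$, replacing $Q$ by $Q+p$ (valid since $\mu$ is also doubling with constant $2^pc_\mu$), only makes explicit the choice of a finite $p^*>p$ depending on $p$ and $c_\mu$, which the paper leaves implicit.
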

\begin{proof}
Let $\kappa=1-\frac{p}{p^*}>0$ and let $t\in(0,T)$. Hölder's inequality and \Cref{lemma: sobolev from poincaré} implies
\begin{equation*}
\begin{split}
    \dashint_{B(x_0,r)} \lvert u\rvert^{p+\kappa(q+1)}\dmu & \leq \left( \dashint_{B(x_0,r)} \lvert u\rvert^{p^*} \dmu\right)^\frac{p}{p^*} \left(\dashint_{B(x_0,r)} \lvert u\rvert^{q+1}\dmu\right)^{\kappa} \\
    & \leq cr^p \dashint_{B(x_0,r)} g_u^p \dmu \left(\dashint_{B(x_0,r)} \lvert u\rvert^{q+1}\dmu\right)^{\kappa}.
\end{split}
\end{equation*}
Integrating the above over $t\in (t,T)$ shows the claim.
\end{proof}

\section{Doubly nonlinear equations}
In this section we give the definition of variational solutions to \eqref{eq: PDE} and obtain energy estimates. We will use the following convention for powers of signed numbers.
\begin{equation}\label{eq: signconv}
    b^\alpha = \begin{cases}
            \lvert b\rvert^{\alpha-1}b ,&\quad b \neq 0,\\
            0, & \quad b=0. 
        \end{cases}
\end{equation}

\begin{definition}\label{def.soldne}
Let $1<p<\infty$, $0<q<\infty$, $\Omega$ be an open subset of $X$ and $0<T<\infty$. A function $u\in L^p(0,T;N^{1,p}(\Omega))\cap L^{q+1}(\Omega_T)$ is a  solution of \eqref{eq: PDE}
in $\Omega_T=\Omega\times(0,T)$ if 
\begin{equation}
\label{eq: var ineq}
    p\int_{\Omega_T}u^{q}\partial_t{\varphi}\dnu  + \int_{\Omega_T} g_u^p\dnu \leq \int_{\Omega_T} g_{u+\varphi}^p\dnu,
\end{equation}
for every $\varphi\in L^p(0,T;N^{1,p}(\Omega))\cap L^{q+1}(X)$ with $\partial_t \varphi\in L^{q+1}(\Omega_T)$ and  $\supp \varphi\subset\Omega_T$. If \eqref{eq: var ineq} holds for all such $\varphi$ with $\varphi\geq 0$ we say that $u$ is a  supersolution. 
If \eqref{eq: var ineq} holds for all such $\varphi$ with $\varphi\leq 0$ we say that $u$ is a subsolution.
\end{definition}

The following direct consequence of the definition of a solution will be useful.
\begin{lemma}
\label{lemma: var sol implies weak sol with inequality}
Let $1<p<\infty$ and $0<q<\infty$. Assume that $u$ is a solution of \eqref{eq: PDE} in $\Omega_T$. Then
\begin{equation*}
    \int_{\Omega_T} u^{q}\partial_t\varphi \dnu \leq \int_{\Omega_T} g_u^{p-1}g_\varphi\dnu 
\end{equation*}
for every $\varphi\in L^p(0,T;N^{1,p}(\Omega))$ with $\partial_t \varphi\in L^{q+1}(\Omega_T)$ and  $\supp \varphi\subset\Omega_T$.
\end{lemma}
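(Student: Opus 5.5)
The plan is to test the variational inequality \eqref{eq: var ineq} with the scaled test functions $\varepsilon\varphi$ and $-\varepsilon\varphi$, where $\varepsilon>0$ is small, and then let $\varepsilon\to0$. This converts the energy comparison into a linearized inequality. Both $\pm\varepsilon\varphi$ are admissible, since $u$ is a solution and not just a super- or subsolution. With $\varepsilon\varphi$ we get
\begin{equation*}
    p\varepsilon\int_{\Omega_T}u^{q}\partial_t\varphi\dnu \leq \int_{\Omega_T} \bigl(g_{u+\varepsilon\varphi}^p - g_u^p\bigr)\dnu .
\end{equation*}

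To bound the right-hand side, use that $g_u+\varepsilon g_\varphi$ is a $p$-weak upper gradient of $u+\varepsilon\varphi$, by the calculus rules in \Cref{section: prelim}. By minimality, $g_{u+\varepsilon\varphi}\le g_u+\varepsilon g_\varphi$ a.e., and hence
\[
g_{u+\varepsilon\varphi}^p-g_u^p\le (g_u+\varepsilon g_\varphi)^p-g_u^p .
\]
Convexity of $s\mapsto s^p$ and the mean value theorem give the pointwise bound
\[
\bigl((g_u+\varepsilon g_\varphi)^p-g_u^p\bigr)/\varepsilon\le p(g_u+\varepsilon g_\varphi)^{p-1}g_\varphi ,
\]
and the right-hand side converges to $p\,g_u^{p-1}g_\varphi$. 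For $\varepsilon\le1$ it is dominated by $p(g_u+g_\varphi)^{p-1}g_\varphi\in L^1(\Omega_T)$, by Hölder since $g_u,g_\varphi\in L^p(\Omega_T)$. Dividing by $p\varepsilon$ and using dominated convergence yields
\[
\int_{\Omega_T} u^q\partial_t\varphi\dnu\le\int_{\Omega_T} g_u^{p-1}g_\varphi\dnu .
\]
This is already the claim, so the $-\varepsilon\varphi$ test is not needed. It would instead give the reverse inequality $-\int u^q\partial_t\varphi\le\int g_u^{p-1}g_\varphi$, since $g_{-\varphi}=g_\varphi$.

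The only delicate point is admissibility. \Cref{def.soldne} asks for $\varphi\in L^{q+1}$, while the lemma only assumes $\varphi\in L^p(0,T;N^{1,p}(\Omega))$ with $\partial_t\varphi\in L^{q+1}$ and compact support. I would argue as follows:
\begin{itemize}
\item Compact support in time together with $\partial_t\varphi\in L^{q+1}(\Omega_T)$ gives $\varphi(x,t)=\int_0^t\partial_t\varphi(x,s)\ds$.
\item Hence $\varphi\in L^{q+1}(\Omega_T)$, by Hölder in time over the bounded interval.
\item The term $\int u^q\partial_t\varphi$ is finite because $u^q\in L^{(q+1)/q}$ and $\partial_t\varphi\in L^{q+1}$.
\end{itemize}
So $\varphi$ is an admissible test function, and the argument above applies. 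I do not expect any real obstacle beyond this check and the justification of dominated convergence.
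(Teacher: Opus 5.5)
Your proof is correct and follows the same route as the paper. Both test \eqref{eq: var ineq} with $\varepsilon\varphi$, bound $g_{u+\varepsilon\varphi}\le g_u+\varepsilon g_\varphi$, apply the mean value theorem, divide by $\varepsilon$ and let $\varepsilon\to0$. You additionally check that $\varphi\in L^{q+1}(\Omega_T)$, so that $\varphi$ is an admissible test function, and you justify the limit by dominated convergence; the paper leaves both of these implicit. Your exponent $p-1$ in $(g_u+\varepsilon g_\varphi)^{p-1}$ is the right one, whereas the paper's display has $q$ there, which is a typo.
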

\begin{proof}
Let $h>0$ and test \eqref{eq: var ineq} with $h\varphi$ to get
\begin{equation*}
    p\int_{\Omega_T} u^{q} \partial_t{\varphi}\dnu  \leq \frac{1}{h}\int_{\Omega_T}(g_{u+h\varphi}^p-g_u^p)\dnu
\end{equation*}
The mean value theorem implies that
\begin{equation*}
    g_{u+h\varphi}^p-g_u^p\leq (g_u +h g_{\varphi})^p-g_u^p \leq p (g_u + hg_\varphi)^{q}hg_\varphi,
\end{equation*}
and hence 
\begin{equation*}
    p\int_{\Omega_T}u^{q}\partial_t{\varphi}\dnu  \leq p\int_{\Omega_T} (g_u + hg_\varphi)^{q}g_\varphi\dnu.
\end{equation*}
The claim follows by letting $h\to 0$.
\end{proof}

\subsection{Time mollification}
We use the following mollification in time when proving energy estimates.
Let $u\in L^p(0,T;N^{1,p}(\Omega))$. For $\delta>0$, we define 
\begin{equation*}
    u_{\delta}(t) =\frac{1}{\delta}\int_0^te^{\frac{s-t}{\delta}}u(x,s)\ds
\end{equation*}
For the following lemma we refer to \cite[Lemma B.1]{BogeleinDuzaarMarcellini}.
\begin{lemma}
Let $1\leq p<\infty$ and $u\in L^p(0,T;N^{1,p}(\Omega))$.
Then $u_{\delta}\in L^p(0,T;N^{1,p}(\Omega))$ and 
\begin{equation*}
    \|u_\delta\|_{L^p(0,T;N^{1,p}(\Omega))} \leq \|u\|_{L^p(0,T;N^{1,p}(\Omega))}
\end{equation*}
for every $\delta>0$.
Moreover, we have $\partial_t u_\delta\in L^p(0,T;N^{1,p}(\Omega))$ and 
\begin{equation}
\label{eq: time der of mollification}
    \partial_t u_\delta = \frac{1}{\delta}(u-u_\delta)
\end{equation}
for every $\delta>0$.
\end{lemma}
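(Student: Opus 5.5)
The plan is to work directly with the exponential kernel defining $u_\delta$. The key observation is that $u_\delta(t)$ is an average of $u(s)$, $0<s<t$, against the measure $\frac{1}{\delta}e^{(s-t)/\delta}\ds$ on $(0,t)$. This measure has total mass $1-e^{-t/\delta}\le 1$, so the norm estimate should follow from Minkowski's integral inequality combined with Young's convolution inequality in time.

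\textbf{Membership and spatial norms.} First I would check that $u_\delta(t)\in N^{1,p}(\Omega)$ for every $t$. Since $u:(0,T)\to N^{1,p}(\Omega)$ is strongly measurable and, on the bounded interval, Bochner integrable against the bounded kernel, $u_\delta(t)$ is a Bochner integral in the Banach space $N^{1,p}(\Omega)$. This gives $\|u_\delta(t)\|_{N^{1,p}(\Omega)}\le \frac1\delta\int_0^t e^{(s-t)/\delta}\|u(s)\|_{N^{1,p}(\Omega)}\ds$.

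Alternatively, one can argue by hand. Take the $\nu$-measurable representative and upper gradients $g(s)$ of $u(s)$ that are jointly measurable, which is available via the minimal weak upper gradient representative $g_u$. Then for every rectifiable curve $\gamma$, Fubini gives
\begin{equation*}
\lvert u_\delta(x,t)-u_\delta(y,t)\rvert\le \int_\gamma \frac1\delta\int_0^t e^{\frac{s-t}{\delta}}g_u(\cdot,s)\ds\,ds_\gamma
\end{equation*}
for all curves outside a family of zero $p$-modulus. Hence $(g_u)_\delta(t)$ is a $p$-weak upper gradient of $u_\delta(t)$. Minkowski's integral inequality in $L^p(\Omega)$ then bounds both $\|u_\delta(t)\|_{L^p}$ and $\|(g_u)_\delta(t)\|_{L^p}$ by the time-convolution of the scalar functions $\|u(s)\|_{L^p}$ and $\|g_u(s)\|_{L^p}$ with the kernel $k_\delta(\tau)=\frac1\delta e^{-\tau/\delta}\chi_{\tau>0}$.

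\textbf{Time integration.} Young's inequality on $(0,T)$ with $\|k_\delta\|_{L^1}\le 1$ yields $\|u_\delta\|_{L^p(0,T;N^{1,p})}\le\|u\|_{L^p(0,T;N^{1,p})}$. If one works with the sum norm \eqref{eq: newtonian space norm def}, one applies this to $f(s)=\|u(s)\|_{N^{1,p}(\Omega)}$ after the pointwise-in-$t$ Bochner bound. Strong measurability of $u_\delta$ follows from continuity of $t\mapsto u_\delta(t)$, which is immediate from the Bochner representation.

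\textbf{The derivative identity.} Write $u_\delta(t)=\frac1\delta e^{-t/\delta}\int_0^t e^{s/\delta}u(s)\ds$. The map $t\mapsto\int_0^t e^{s/\delta}u(s)\ds$ is absolutely continuous with values in $N^{1,p}(\Omega)$, with derivative $e^{t/\delta}u(t)$ for a.e.\ $t$ by the Lebesgue differentiation theorem for Bochner integrals. The product rule then gives
\begin{equation*}
\partial_t u_\delta=-\tfrac1\delta u_\delta+\tfrac1\delta u=\tfrac1\delta(u-u_\delta),
\end{equation*}
which lies in $L^p(0,T;N^{1,p}(\Omega))$ by the first part.

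\textbf{Main obstacle.} The delicate point is the Banach-space framework: one must know that $N^{1,p}(\Omega)$ is a Banach space, which it is, and that the measurable representative of $u_\delta$ coincides with the Bochner integral. The latter holds because evaluation into $L^p$ commutes with Bochner integration. If time permits, I would give the upper-gradient argument via Fubini as a self-contained alternative to relying on the Bochner theory.
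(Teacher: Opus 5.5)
Your argument is correct and is essentially the proof the paper relies on when it cites \cite[Lemma B.1]{BogeleinDuzaarMarcellini}. That proof has three steps: treat $u_\delta(t)$ as a Bochner integral in the Banach space $N^{1,p}(\Omega)$; bound $\|u_\delta(t)\|_{N^{1,p}(\Omega)}$ by the time convolution of $\|u(s)\|_{N^{1,p}(\Omega)}$ with a kernel of mass at most one, then apply Young's inequality (equivalently Jensen plus Fubini); and differentiate $t\mapsto e^{-t/\delta}\int_0^t e^{s/\delta}u(s)\ds$ using the Lebesgue differentiation theorem for Bochner integrals.

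Your optional ``by hand'' upper-gradient route is not needed. If you pursued it, you would have to handle the fact that the exceptional curve families for $g_u(\cdot,s)$ depend on $s$, as well as the choice of pointwise representatives.
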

We also have the convergence in norm and convergence of mixed terms, as in the following lemma, see \cite[Lemma 3.4]{EleniusKinnunen}
\begin{lemma}
\label{lemma: convergence of time mollification}
Let $1\leq p<\infty$ and $u\in L^p(0,T;N^{1,p}(\Omega))$. Then 
\begin{equation*}
    \lim_{\delta\to 0} \| u_\delta -u\|_{L^p(0,T;N^{1,p}(\Omega))}=0,
\end{equation*}
and
\begin{equation*}
    \lim_{\delta \to 0} \frac{1}{\delta}\int_0^T \int_0^{T-t}\| u(t)-u(t+s)\|_{N^{1,p}(\Omega)}^p e^{-\frac{s}{\delta}}\ds\dt=0.
\end{equation*}
\end{lemma}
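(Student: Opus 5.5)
The claim to prove is \Cref{lemma: convergence of time mollification}: $u_\delta\to u$ in $L^p(0,T;N^{1,p}(\Omega))$, and the mixed term tends to zero. The plan is to reduce both statements to the continuity of translations in the Bochner space $L^p(0,T;N^{1,p}(\Omega))$. Write $V=N^{1,p}(\Omega)$, which is a Banach space, and extend $u$ by zero outside $(0,T)$. Strongly measurable $p$-integrable functions can be approximated in $L^p(\mathbb R;V)$ by simple functions with values in $V$. Approximating the indicator of each measurable set by indicators of finite unions of intervals then shows that $\omega(s)=\|u(\cdot+s)-u\|_{L^p(\mathbb R;V)}\to 0$ as $s\to0$. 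This is the standard argument, applied to $V$-valued functions instead of scalar ones.

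For the mixed term, apply Fubini to the nonnegative integrand:
\[
\frac1\delta\int_0^T\int_0^{T-t}\|u(t)-u(t+s)\|_V^p e^{-s/\delta}\ds\dt\le \frac1\delta\int_0^\infty e^{-s/\delta}\,\omega(s)^p\ds=\int_0^\infty e^{-\sigma}\omega(\delta\sigma)^p\,d\sigma .
\]
Since $\omega\le 2\|u\|_{L^p(0,T;V)}$ and $\omega(\delta\sigma)\to0$ pointwise, dominated convergence sends the right-hand side to zero.

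For the norm convergence, rewrite the mollification as
\[
u(t)-u_\delta(t)=e^{-t/\delta}u(t)+\frac1\delta\int_0^t e^{-s/\delta}\bigl(u(t)-u(t-s)\bigr)\ds .
\]
Apply Minkowski's integral inequality in $L^p(0,T;V)$, using the kernel weight $\frac1\delta e^{-s/\delta}\ds$, which has total mass at most one. This gives
\[
\|u-u_\delta\|_{L^p(0,T;V)}\le \Bigl(\int_0^T e^{-pt/\delta}\|u(t)\|_V^p\dt\Bigr)^{1/p}+\int_0^\infty e^{-\sigma}\,\omega(-\delta\sigma)\,d\sigma .
\]
The first term tends to zero by dominated convergence, because $e^{-pt/\delta}\to0$ for every $t>0$. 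The second term tends to zero exactly as in the mixed-term estimate, now with the translation $-\delta\sigma$. Integrability and the norm bound are already recorded in the preceding lemma, so no separate estimate is needed for them.

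The only delicate step is the continuity of translations for $V$-valued functions. The argument must use Bochner strong measurability, and any Newton--Sobolev statements must be applied at the level of norm classes rather than pointwise representatives. I would handle this through the density of $V$-valued step functions. Note that $V$ need not be separable, but strong measurability already provides simple-function approximation.
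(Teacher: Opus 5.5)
Your argument is correct. The paper does not prove this lemma itself; it quotes it from \cite[Lemma 3.4]{EleniusKinnunen}, so your proposal is a self-contained proof rather than a variant of an in-paper one.

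The steps check out:
\begin{itemize}
\item The identity $u(t)-u_\delta(t)=e^{-t/\delta}u(t)+\frac1\delta\int_0^t e^{-s/\delta}\bigl(u(t)-u(t-s)\bigr)\,ds$ follows from $\frac1\delta\int_0^t e^{-s/\delta}\,ds=1-e^{-t/\delta}$.
\item Minkowski's integral inequality applies to the $V$-valued integral once $\|\cdot\|_V$ is moved inside.
\item For the mixed term, the constraint $0<t<t+s<T$ lets you bound $\int_0^{T-s}\|u(t)-u(t+s)\|_V^p\,dt$ by $\omega(s)^p$ for the zero extension.
\end{itemize}
Dominated convergence then finishes both claims, since $\omega\le 2\|u\|_{L^p(0,T;V)}$ and $\omega(-s)=\omega(s)$.

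What your route gives is that the single modulus $\omega(s)=\|u(\cdot+s)-u\|_{L^p(\mathbb R;V)}$ controls both claims with explicit bounds:
\[
\|u-u_\delta\|_{L^p(0,T;V)}\le\Bigl(\int_0^T e^{-pt/\delta}\|u(t)\|_V^p\,dt\Bigr)^{1/p}+\int_0^\infty e^{-\sigma}\omega(\delta\sigma)\,d\sigma,
\]
\[
\frac1\delta\int_0^T\int_0^{T-t}\|u(t)-u(t+s)\|_V^p e^{-s/\delta}\,ds\,dt\le\int_0^\infty e^{-\sigma}\omega(\delta\sigma)^p\,d\sigma .
\]
A standard alternative is a density argument. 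One checks both limits for $u\in C([0,T];V)$, where they are elementary. One then extends to general $u$ using the contraction $\|u_\delta\|\le\|u\|$ from the preceding lemma and the crude bound $2^p\|u\|_{L^p(0,T;V)}^p$ for the mixed term. Your approach needs neither of these.

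The only points worth writing out are routine:
\begin{itemize}
\item Joint measurability of $(t,s)\mapsto\|u(t)-u(t+s)\|_V$ for Tonelli, which follows from strong measurability of $u$.
\item Measurability of $\omega$; it is in fact continuous, since $|\omega(s+h)-\omega(s)|\le\omega(h)$.
\item For a.e.\ $t$, the pointwise-defined $u_\delta(\cdot,t)$ equals the $V$-valued Bochner integral. Both agree $\mu$-a.e.\ via the continuous embedding $V\hookrightarrow L^p(\Omega)$, and Newtonian functions that agree $\mu$-a.e.\ coincide in $N^{1,p}(\Omega)$.
\end{itemize}
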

Though we defined $u_{\delta}$ only for $u\in L^p(0,T;N^{1,p}(\Omega))$, the definition makes sense more generally for $u\in L^p(0,T;L^{p}(\Omega))$. Convergence of $u^\delta$ to $u$ as $\delta\to 0$ holds in the norm of $L^p(0,T;N^{1,p}(\Omega))$ as well.
\subsection{Energy estimate}
Given $0<q<\infty$, let
\begin{equation*}
    G_{\pm}(w,k)=\pm q \int_{k}^{w}\lvert s\rvert^{q-1}(s-k)_{\pm}\ds.
\end{equation*}
A change of variables shows that 
\begin{equation}
    \label{eq: primitive for energy est after change of var}
    G_{\pm}(w,k)=\int_{ k^{q}}^{ w^{q}}(s^\frac{1}{q}-k)_\pm\ds.
\end{equation}
The following lemma is from \cite[Lemma 2.2]{BogeleinDuzaarLiao}.
\begin{lemma}
Let $0<q<\infty$. There exists a constant $c=c(q)$ such that
\begin{equation*}
    \frac{1}{c}(\lvert w\rvert+\lvert k\rvert)^{q-1}(w-k)_\pm^2 \leq G_{\pm}(w,k)\leq c (\lvert w\rvert+\lvert k\rvert)^{q-1}(w-k)_\pm^2,
\end{equation*}
for all $w,k\in \mathbb R$.
\end{lemma}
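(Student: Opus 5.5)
The plan is to prove the two-sided bound by working directly from the representation \eqref{eq: primitive for energy est after change of var}. Substituting $s=\sigma^q$ (with the signed-power convention \eqref{eq: signconv}) reduces everything to the one-dimensional integral
\[
G_\pm(w,k)=\pm q\int_k^w \lvert \sigma\rvert^{q-1}(\sigma-k)_\pm\,d\sigma .
\]
We treat $G_+$; the case of $G_-$ follows by the symmetry $G_-(w,k)=G_+(-w,-k)$, which is immediate from the definition since $\lvert\sigma\rvert^{q-1}$ is even. For $G_+$ we may assume $w>k$, since otherwise both sides vanish. The integrand is nonnegative, so $G_+(w,k)\ge0$. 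The two-sided bound is homogeneous of degree $q+1$ under $(w,k)\mapsto(\lambda w,\lambda k)$ with $\lambda>0$, so we may normalize $\lvert w\rvert+\lvert k\rvert=1$. The claim then becomes
\[
G_+(w,k)\approx (w-k)^2 \quad\text{on the compact set } \{\lvert w\rvert+\lvert k\rvert=1,\ w\ge k\}.
\]

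\textbf{Upper bound.} We split into cases according to the sign of $q-1$.

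If $q\ge1$, then $\lvert\sigma\rvert\le\lvert w\rvert+\lvert k\rvert$ on the interval of integration. This gives $G_+\le q(\lvert w\rvert+\lvert k\rvert)^{q-1}(w-k)^2/2$ directly.

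If $q<1$, the weight $\lvert\sigma\rvert^{q-1}$ is singular at $0$. We use $(\sigma-k)\le (w-k)$ and integrate, which yields the bound $G_+\le q(w-k)\int_k^w\lvert\sigma\rvert^{q-1}d\sigma$. We then distinguish two cases according to whether $0$ is far from $[k,w]$:
\begin{itemize}
\item If $\min(\lvert w\rvert,\lvert k\rvert)$ is comparable to $1$ and $k,w$ have the same sign, the weight is bounded and comparable to $1$.
\item Otherwise $w-k$ is comparable to $\lvert w\rvert+\lvert k\rvert=1$. In this case we use $\int_k^w\lvert\sigma\rvert^{q-1}d\sigma\le c(q)$, which holds because $\lvert\sigma\rvert^{q-1}$ is integrable at $0$.
\end{itemize}

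\textbf{Lower bound.} Here we use that $(\sigma-k)_+\ge (w-k)/2$ on the upper half $[(k+w)/2,w]$ of the interval. On this subinterval we need $\lvert\sigma\rvert^{q-1}\gtrsim 1$.

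For $q\le1$ this is automatic, because $\lvert\sigma\rvert\le1$ there.

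For $q>1$ we need $\lvert\sigma\rvert\gtrsim1$ on a subinterval of length comparable to $w-k$. We obtain it as follows:
\begin{itemize}
\item If $w\ge 1/4$, the portion of the upper half where $\sigma\ge w/2$ has length at least $\min\{(w-k)/2,w/2\}$, which is comparable to $w-k$ since $w-k\le 2$.
\item If $w<1/4$, then $\lvert k\rvert>3/4$, so $k$ is negative and large. In this case we use the lower quarter of the interval, where $\sigma\le k/2$ still holds on a set of length comparable to $w-k$. The factor $(\sigma-k)_+$ is smaller there, but integrating $(\sigma-k)$ over a length comparable to $w-k$ still gives an amount comparable to $(w-k)^2$.
\end{itemize}

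The main obstacle is bookkeeping these cases near $\sigma=0$ in a uniform way. The cleanest route may instead be a compactness argument. The ratio $G_+(w,k)/(w-k)^2$ is continuous and positive on the normalized set with $w>k$, so it suffices to check its limit as $w\downarrow k$. By l'H\^opital this limit is $q\lvert k\rvert^{q-1}/2$, which is positive and finite provided $\lvert k\rvert=1/2$. That is exactly the situation on the normalized set, since $w\to k$ forces $\lvert k\rvert\to 1/2$.
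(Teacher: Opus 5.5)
Your proof is correct, but it does not follow an argument in the paper: the paper gives no proof of this lemma and simply imports it from \cite[Lemma 2.2]{BogeleinDuzaarLiao}. What you supply is self-contained and elementary. You reduce $G_-$ to $G_+$ via $G_-(w,k)=G_+(-w,-k)$, use the $(q+1)$-homogeneity to normalize to $\lvert w\rvert+\lvert k\rvert=1$, and then bound $G_+(w,k)/(w-k)^2$ above and below on the compact set $\{\lvert w\rvert+\lvert k\rvert=1,\ w\ge k\}$. Of your two ways of finishing, the compactness argument is the better fit. It needs only continuity and positivity of $G_+$ for $w>k$ and the behaviour at the two degenerate points $\pm(\frac12,\frac12)$. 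It avoids all case distinctions near $\sigma=0$, and the paper only ever uses the existence of some $c(q)$. The explicit case analysis instead gives computable constants in exchange for more bookkeeping.

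Two details need tidying. First, in the case $w<1/4$ of the lower bound for $q>1$, the assertion $\lvert k\rvert>3/4$ is false when $w<0$ (take $w=-0.4$, $k=-0.6$). What always holds is $k\le-\frac12$, and this suffices. Indeed $k+\frac{w-k}{4}\le\frac k2$ is equivalent to $w\le -k$, so the entire lower quarter of $[k,w]$ satisfies $\lvert\sigma\rvert\ge\frac{\lvert k\rvert}{2}\ge\frac14$, which gives $G_+\ge \frac{q}{32}4^{1-q}(w-k)^2$. Second, in the compactness argument, l'H\^opital with $k$ frozen only computes a one-variable limit, whereas on the normalized set $w$ and $k$ move simultaneously. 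The joint limit follows from the mean value theorem for integrals: $G_+(w,k)=\frac q2\lvert\xi\rvert^{q-1}(w-k)^2$ for some $\xi\in[k,w]$, valid whenever $0\notin[k,w]$, hence near $\pm(\frac12,\frac12)$.

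Finally, the integral you ``reduce'' to is just the definition of $G_\pm$, so the detour through \eqref{eq: primitive for energy est after change of var} is unnecessary. As printed, that representation carries the wrong sign for $G_-$, so working from the definition, as you in effect do, is the right choice.
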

The following lemma is an energy estimate for subsolutions to \eqref{eq: PDE}.
\begin{lemma}
\label{lemma: caccioppoli with cutoff}
Let $1<p<\infty$ and $0<q<\infty$. Let $u$ be a local subsolution in $\Omega_T$. There exists a constant $c$, depending on $p$, such that for any $k\in\mathbb R$ and a.e. $0<t_0<T$, there holds 
\begin{equation*}
\begin{split}
    & \esssup_{t_0<t<T}p\int_{\Omega} G_{+}(u(t),k)\varphi^p\dmu + \int_{\Omega_{T}}g_{(u-k)_+}^p\varphi^{p}\dmu\dt \\
    & \qquad \leq c\int_{\Omega_{T}}(u-k)_+^p g_{\varphi}^p+G_+(u,k)(\partial_t\varphi^p)_+\dmu\dt + p\int_{\Omega} G_+ (u(t_0),k)\varphi^{p}\dmu .
\end{split}
\end{equation*}
for every non-negative Lipschitz $\varphi\in N^{1,p}(0,T;N^{1,p}(\Omega))$ with $\supp \varphi \subset \Omega_T$.
\end{lemma}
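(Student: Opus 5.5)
We prove the Caccioppoli estimate of \Cref{lemma: caccioppoli with cutoff} by testing the variational inequality with a truncation of $u$, regularized in time by the mollification $u_\delta$. The plan is to derive, via \Cref{lemma: var sol implies weak sol with inequality} adapted to subsolutions (test \eqref{eq: var ineq} with $-h\psi$, $\psi\geq 0$, and let $h\to 0$), the weak form
\begin{equation*}
    \int_{\Omega_T} u^{q}\partial_t\psi \dnu \geq \int_{\Omega_T} g_u^{p-2}\langle \nabla u,\nabla \psi\rangle\dnu,
\end{equation*}
where the right-hand side is understood through the one-sided inequality available in the metric setting. Because we lack a genuine inner product, I would instead work directly with \eqref{eq: var ineq} and a test function of the form $\varphi=-h\,\xi(t)\varphi^p(u_\delta-k)_+$ suitably modified. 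The convexity argument then goes as follows. With $w=u-h\xi\varphi^p(u-k)_+$ and $0\le h\xi\varphi^p\le 1$, the calculus rule for $(1-\eta)u+\eta v$ applied with $v=k$ on $\{u>k\}$ gives the $p$-weak upper gradient $(1-h\xi\varphi^p)g_u+ h\xi (u-k)_+g_{\varphi^p}$ on $\{u>k\}$ and $g_u$ elsewhere. Expanding $g_{w}^p-g_u^p$ to first order in $h$ yields the spatial bound
\begin{equation*}
    -\int \xi\varphi^p g_{(u-k)_+}^p + p\int \xi\varphi^{p-1}(u-k)_+ g_\varphi g_{(u-k)_+}^{p-1}.
\end{equation*}
Young's inequality then absorbs the mixed term into the good gradient term, leaving $c\int(u-k)_+^pg_\varphi^p$.

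For the time term, the test function must have a time derivative in $L^{q+1}$, so we replace $(u-k)_+$ by $((u^q)_\delta{}^{1/q}-k)_+$, or more simply by $(\llbracket u\rrbracket_\delta-k)_+$ where $\llbracket u\rrbracket_\delta$ is the mollification of $u^q$ raised to $1/q$. Using \eqref{eq: time der of mollification}, we have $u^q-(u^q)_\delta=\delta\partial_t(u^q)_\delta$. The monotonicity of $s\mapsto(s^{1/q}-k)_+$ then gives the sign inequality
\begin{equation*}
    \int (u^q-(u^q)_\delta)\,(\ldots)\geq 0 ,
\end{equation*}
which allows us to replace $u^q$ by $(u^q)_\delta$ in the time term up to a favorable sign. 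Integrating by parts, the formula \eqref{eq: primitive for energy est after change of var} identifies $\partial_t(u^q)_\delta\,((u^q)_\delta^{1/q}-k)_+$ as $\partial_t G_+((u^q)_\delta^{1/q},k)$. Choosing $\xi$ as a piecewise linear cutoff equal to $1$ on $(t_0+\epsilon,t_1-\epsilon)$ and vanishing outside $(t_0,t_1)$, the time derivative of $\xi\varphi^p$ produces the terms $\sup$ at $t_1$, $G_+(u(t_0),k)\varphi^p$ at $t_0$, and $G_+(\partial_t\varphi^p)_+$. Letting $\delta\to0$ via \Cref{lemma: convergence of time mollification}, then $\epsilon\to 0$ at Lebesgue points, and finally taking the supremum over $t_1$, gives the estimate. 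The two terms (the supremum and the gradient) can be bounded separately and then added, which only changes $c$.

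The main obstacle is the passage $\delta\to0$ in the spatial term. The test function built from the mollified $u^q$ has an upper gradient involving $(u^q)_\delta$ rather than $u$, so the pointwise convexity bound is not directly in terms of $g_{(u-k)_+}$. I would handle this in two steps. First, perform the convexity estimate with $v=((u^q)_\delta)^{1/q}$-truncations, using that the variational inequality only requires some upper gradient of $u+\varphi$. Second, pass to the limit using $L^p(N^{1,p})$ convergence of the mollification, applied to $u^q$ restricted to the set where $u$ is bounded away from $0$ on $\{u>k\}$ (for $k\geq0$; for $k<0$ we argue symmetrically or truncate). The conversion of $N^{1,p}$ convergence of $(u^q)_\delta$ into convergence of its $1/q$ power on level sets is the delicate point. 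I would first try to avoid it by mollifying $u$ itself and estimating the error term $\int (u^q-u_\delta^q)\partial_t(\ldots)$ directly, with the sign again coming from monotonicity of $s\mapsto s^q$.
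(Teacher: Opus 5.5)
There is a genuine gap in the time term: the sign you call favorable is in fact unfavorable. You keep the unmollified inequality \eqref{eq: var ineq} and put the forward mollification into the test function, $\psi=\xi\varphi^p f((u^q)_\delta)$ with $f(\lambda)=(\lambda^{1/q}-k)_+$. Testing with $-h\psi$ and letting $h\to0$ gives $-p\int_{\Omega_T}u^q\partial_t\psi\dnu\le \lim_{h\to 0}\frac1h\int_{\Omega_T}(g_{u-h\psi}^p-g_u^p)\dnu$. Split $u^q=(u^q)_\delta+(u^q-(u^q)_\delta)$. The part $-\int_{\Omega_T}(u^q)_\delta\partial_t\psi\dnu=-\int_{\Omega_T}\partial_t(\xi\varphi^p)G_+(((u^q)_\delta)^{1/q},k)\dnu$ is the good term. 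Besides an $o(1)$ piece, however, the error contains
\[
\int_{\Omega_T}(u^q-(u^q)_\delta)\,\xi\varphi^p\,\partial_t f((u^q)_\delta)\dnu=\frac1\delta\int_{\Omega_T}\xi\varphi^p f'((u^q)_\delta)\,(u^q-(u^q)_\delta)^2\dnu\ \ge 0 ,
\]
which is exactly your monotonicity inequality. It enters the left-hand side with a minus sign. After rearranging, it therefore becomes an extra nonnegative term on the right-hand side of the energy estimate, and nothing controls it as $\delta\to0$. Your fallback, testing with $(u_\delta-k)_+$ and using monotonicity of $s\mapsto s^q$, fails for the same reason. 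Averaging backward in time, i.e.\ using $\frac1\delta\int_t^T e^{(t-s)/\delta}u^q(s)\ds$, would flip this sign, but it would not cure the regularity issue below.

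The paper reverses the roles. It works with the mollified form \eqref{eq: mollified var ineq} of the variational inequality, in which the time derivative falls on $(u^q)_\delta$ while the test function $\eta=-\xi\varphi^p(u-k)_+$ is built from $u$ itself. The error term is then $\frac1\delta\int_{\Omega_T}(u^q-(u^q)_\delta)\,\xi\varphi^p\,[(u-k)_+-f((u^q)_\delta)]\dnu\ge0$. Since $(u-k)_+=f(u^q)$, this has the right sign and can be dropped.

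The same choice removes what you call the main obstacle, which your proposal leaves unresolved. For $q\neq1$ the function $f((u^q)_\delta)$ need not even lie in $L^p(0,T;N^{1,p}(\Omega))$, because $u^q$ is in general not a Sobolev function. In the paper, by contrast, the spatial term is $g_{u+\eta}^p$ with $\eta$ built from $u$, and the limit $\delta\to0$ follows from the mixed-term convergence in \Cref{lemma: convergence of time mollification}.

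Your spatial estimate itself, a first-order expansion in $h$ followed by Young's inequality, is fine. It is an alternative to the paper's route, which applies convexity of $t\mapsto t^p$ with weights $1-\xi\varphi^p$ and $\xi\varphi^p$ to the full test function and then absorbs.
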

\begin{proof}
Let $\varphi$ be as in the statement. We may assume that $0\leq \varphi\leq 1$. Since the support of $\varphi$ is compact, after a change of variables we may assume that $0$ is a Lebesgue point of $u$ in the sense that 
\begin{equation*}
    \lim_{h\to 0}\frac{1}{h}\int_{0}^{h} \int_{\Omega} \lvert u - u(\cdot,0) \rvert^{q+1}\dmu\dt=0.
\end{equation*}
Let $\delta>0$. The mollified form of \eqref{eq: var ineq} is 
\begin{equation}
\label{eq: mollified var ineq}
\begin{split}
    -p\int_{\Omega_T} \partial_t(u^q)_\delta \eta\dmu\dt  +& \int_{\Omega_T} (g_u^p)_\delta \dmu\dt \leq \frac{1}{\delta}\int_0^T\int_0^{T-t}\int_{\Omega} g_{u(t)+\eta(t+s)}^p\dmu\ds\dt\\
    &+\int_{\Omega} u^{q}(\cdot,0)\frac{1}{\delta}\int_0^T e^{-\frac{s}{\delta}}\eta(x,s)\ds\dmu   ,
\end{split}
\end{equation}
for any $\eta\in L^p(0,T;N_0^{1,p}(\Omega))\cap L^{q+1}(\Omega_T)$. Let $0<\tau_1<\tau_2<T$ and $\epsilon>0$. Construct a cutoff function
\begin{equation*}
    \xi(t)
        =\begin{cases}
            0 ,&\quad 0<t\leq \tau_1-\epsilon\\
            \frac{1}{\epsilon}(t-\tau_1+\epsilon), & \quad \tau_1-\epsilon <t\leq \tau_1 \\
            1,&\quad \tau_1<t<\tau_2,\\
            \frac{1}{\epsilon}(\tau_2+\epsilon-t), & \quad \tau_2<t\leq \tau_2+\epsilon\\
            0,&\quad \tau_2 +\epsilon <t<T .
        \end{cases}
\end{equation*}
We apply $\eta=-\xi\varphi^p  (u-k)_+$ as a test function in \eqref{eq: mollified var ineq}. For the term containing the time derivative, we see that
\begin{equation*}
\begin{split}
    -p\int_{\Omega_T} \partial_t(u^q)_\delta \eta\dmu\dt & = p\int_{\Omega_T} \partial_t (u^q)_\delta \xi\varphi^p(u-k)_+\dmu\dt \\
    & = p\int_{\Omega_T} \partial_t(u ^{q})_\delta \xi\varphi^p( ( u^q)_\delta^{\frac{1}{q}}-k)_+\dmu\dt \\
    & \qquad + p\int_{\Omega_T} \partial_t(u^{q})_\delta \xi\varphi^p( (u-k)_+ -( u^{q})_\delta^{\frac{1}{q}}-k)_+\dmu\dt.
\end{split}
\end{equation*}
Using \eqref{eq: time der of mollification} and the fact that the map $\lambda\mapsto ( \lambda^\frac{1}{q}-k)_+$ is a monotone increasing function, we obtain
\begin{equation*}
\begin{split}
    &p\int_{\Omega_T} \partial_t(u^{q})_\delta \xi\varphi^p( (u-k)_+ -(u^{q})_\delta^{\frac{1}{q}}-k)_+\dmu\dt \\
    &= p\int_{\Omega_T} \frac{1}{\delta}(u^{q}-( u^{q})_\delta) \xi\varphi^p( (u-k)_+ -( u^{q})_\delta^{\frac{1}{q}}-k)_+\dmu\dt \\
    & \geq 0.
\end{split}
\end{equation*}
This implies that
\begin{equation*}
\begin{split}
    -p\int_{\Omega_T} \partial_t( u^q)_\delta \eta\dmu\dt &\geq 
     p\int_{\Omega_T} \partial_t(u ^{q})_\delta \xi\varphi^p( ( u^{q})_\delta^{\frac{1}{q}}-k)_+\dmu\dt \\
     & = - p\int_{\Omega_T} \partial_t(\varphi^p \xi) G_+(( u^{q})_\delta^\frac{1}{q},k)\dmu\dt,
\end{split}
\end{equation*}
where in the second line we used \eqref{eq: primitive for energy est after change of var} to conclude that 
\begin{equation*}
    \partial_t G_+((u^{q})_\delta^\frac{1}{q},k)= \partial_t ( u^{q})_\delta (( u^{q})_\delta^\frac{1}{q}-k)_+.
\end{equation*}
Using \eqref{eq: primitive for energy est after change of var} and the convergence $( u^{q})_\delta\to   u^{q}$ in $L^{\frac{q+1}{q}}$ as $\delta\to 0$, we also deduce that
\begin{equation*}
    \lim_{\delta\to 0}\int_{\Omega_T} \partial_t(\varphi^p \xi) G_+((u^{q})_\delta^\frac{1}{q},k)\dmu\dt = \int_{\Omega_T} \partial_t(\varphi^p \xi) G_+(u,k)\dmu\dt.
\end{equation*}
Applying \Cref{lemma: convergence of time mollification} and the fact that $\eta$ has compact support in $(0,T)$ yields
\begin{equation*}
\begin{split}
    & \lim_{\delta\to 0} \frac{1}{\delta}\int_0^T\int_0^{T-t}\int_{\Omega} g_{u(t)-\eta(t+s)}^p\dmu\ds\dt+\int_{\Omega}  u^{p-1}(0)\frac{1}{\delta}\int_0^T e^{-\frac{s}{\delta}}\eta(x,s)\ds\dmu \\
    &\qquad = \int_{\Omega_T} g_{u-\eta}^p\dmu\dt.
\end{split}
\end{equation*}
Hence, letting $\delta\to 0$ in \eqref{eq: mollified var ineq} we obtain
\begin{equation*}
\begin{split}
    & -p\int_{\Omega_T} \partial_t(\varphi^p \xi)G_+(u,k)\dmu\dt + \int_{\Omega_T} g_{u}^p\dmu\dt  \leq \int_{\Omega_T} g_{u-\eta}^p\dmu\dt.
\end{split}
\end{equation*}
The product rule yields 
\begin{equation*}
\begin{split}
     &-p\int_{\Omega_T} \partial_t(\varphi^p \xi)G_+(u,k)\dmu\dt + \int_{\Omega_T} g_{(u-k)_+}^p\dmu\dt \leq  \int_{\Omega_T} g_{(1-\xi\varphi^p)(u-k)_+}^p\dmu\dt \\
    &\qquad \leq \int_{\Omega_T} ((1-\xi\varphi^p)g_{(u-k)_+}+\xi p\varphi^{p-1}g_\varphi (u-k)_+ )^p\dmu\dt.
\end{split}
\end{equation*}
The convexity of the mapping $t\mapsto t^p$ implies that
\begin{equation*}
\begin{split}
    ((1-\xi\varphi^p)g_{(u-k)_+}+\xi p\varphi^{p-1}g_\varphi (u-k)_+ )^p & = ((1-\xi\varphi^p)g_{(u-k)_+}+\xi\varphi^{p}\frac{p}{\varphi}g_\varphi (u-k)_+ )^p \\
    & \leq (1-\xi \varphi^p)g_{(u-k)_+}^p+\xi p^p g_\varphi^p(u-k)_+,
\end{split}
\end{equation*}
and hence it holds that
\begin{equation*}
\begin{split}
    &-p\int_{\Omega_T}  (\partial_t \varphi^p \xi + \partial_t \xi \varphi^p)G_+(u,k)\dmu\dt + \int_{\Omega_T} g_{(u-k)_+}^p\dmu\dt \\
    &\qquad\leq \int_{\Omega_T} (1-\xi\varphi^p)g_{(u-k)_+}^p\dmu\dt + p^p \int_{\Omega_T} \xi g_{\varphi}^p (u-k)_+^p\dmu\dt.
\end{split}
\end{equation*}
Absorbing the first term on the right hand side of the above, we have
\begin{equation*}
\label{eq: energy estimate before time der calc}
\begin{split}
&-p\int_{\Omega_T}  (\partial_t \varphi^p \xi + \partial_t \xi \varphi^p)G_+(u,k)\dmu\dt + \int_{\Omega_T} g_{(u-k)_+}^p\xi\varphi^p\dmu\dt \\
& \qquad \leq p^p \int_{\Omega_T} g_{\varphi}^p (u-k)_+^p\dmu\dt.    
\end{split}
\end{equation*}
Letting $\epsilon \to 0$ in the above yields
\begin{equation}
\label{eq: energy estimate before choice of times}
\begin{split}
    &p\int_{\Omega}  G_+(u(\tau_2),k)\dmu + \int_{\Omega_{\tau_2,\tau_2}} g_{(u-k)_+}^p\varphi^p\dmu\dt \leq p^p \int_{\Omega_T} g_{\varphi}^p (u-k)_+^p\dmu\dt \\
    & \qquad + p\int_{\Omega_T} G_+(u,k)\partial_t(\varphi^p)\dmu\dt+ p\int_{\Omega} G_+(u(\tau_1),k)\varphi^p\dmu,
\end{split}
\end{equation}
for almost every $\tau_1,\tau_2\in (0,T)$. First, letting $\tau_1\to 0$ and $\tau_2\to T$ in \eqref{eq: energy estimate before choice of times}, we obtain
\begin{equation}
\label{eq: energy estimate without esssup}
    \int_{\Omega_{T}} g_{(u-k)_+}^p\varphi^p\dmu\dt\leq p^p \int_{\Omega_T} g_{\varphi}^p (u-k)_+^p\dmu\dt + p\int_{\Omega_T} G_+(u,k)(\partial_t\varphi^p)_+\dmu\dt.
\end{equation}
Second, discarding the second term on the left hand side of \eqref{eq: energy estimate before choice of times} and letting $\tau_1=t_0$, $\tau_2\in(t_0,T)$ gives
\begin{equation}
\label{eq: esssup term for energy est}
\begin{split}
    & \esssup_{t_0<t<T} p\int_{\Omega} G_+(u(t),k)\dmu \leq p^p \int_{\Omega_T} g_{\varphi}^p (u-k)_+^p\dmu\dt \\
    & \qquad + p\int_{\Omega_T} G_+(u,k)\partial_t(\varphi^p)\dmu\dt+ p\int_{\Omega} G_+(u(t_0),k)\varphi^p\dmu.
\end{split}
\end{equation}
Adding \eqref{eq: energy estimate without esssup} and \eqref{eq: esssup term for energy est} we arrive at 
\begin{equation*}
\begin{split}
    & \esssup_{t_0<t<T}p\int_{\Omega} G_{+}(u(t),k)\varphi^p\dmu + \int_{\Omega_{T}}g_{(u-k)_+}^p\varphi^p\dmu\dt \\
    &\qquad  \leq c\int_{\Omega_{T}}(u-k)_+^p g_{\varphi}^p+G_+(u,k)(\partial_t\varphi^p)_+\dmu\dt + p\int_{\Omega} G_+ (u(t_0),k)\varphi^p\dmu,
\end{split}
\end{equation*}
which finishes the proof.
\end{proof}

\section{Poincaré and doubling implies parabolic Harnack inequality}
\label{section: VD + PI implies PHI}
In this section we will establish that the conjunction of volume doubling and a $(1,p)$-Poincaré inequality imply the intrinsic parabolic Harnack inequality. For $r>0$, $x_0\in X$, $t_0\in \R$ and $\delta>0$, we will use the notation 
\begin{equation*}
    Q^-_{r,\delta}(x_0,t_0)=B(x_0,r)\times(t_0-\delta r^p,t_0),
\end{equation*}
and $Q^-_{r}(x_0,t_0)=Q^-_{r,1}(x_0,t_0)$.
\subsection{Local boundedness}
We begin by showing the following proposition, which shows that subsolutions are locally bounded from above.
\begin{proposition}
\label{prop: esssup estimate}
Let $1<p<\infty$ and $0<q<\infty$ with $q+1<p$. Assume that $\mu$ is doubling and that a $(1,p)$-Poincaré inequality holds in $(X,d,\mu)$. Let $u$ be a subsolution in $\Omega_T$. There exists a constant $c$, depending on $c_\mu$, $c_P$, $p$ and $q$ such that if $T>0$, $0<r<\diam(X)/3$ and $(x_0,t_0)\in \Omega_T$ with $B(x_0,r)\times (t_0-T,t_0)\subset\Omega_T$ then
\begin{equation*}
    \esssup_{B(x_0,r/2)\times(t_0-T/2,t_0)} u\leq c \left(\frac{T}{r^p} \dashint_{B(x_0,r)\times(t_0-T,t_0)} u_+^{p}\dnu\right)^\frac{1}{q+1} + \left( \frac{r^p}{T}\right)^\frac{1}{p-q-1}
\end{equation*}
\end{proposition}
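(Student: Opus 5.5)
We prove the estimate by a De Giorgi iteration on shrinking cylinders and increasing levels, combining the energy estimate of \Cref{lemma: caccioppoli with cutoff} with the parabolic Sobolev embedding of \Cref{lemma: par sob}. By translation we assume $t_0=0$ and $x_0$ fixed. Fix a level $k>0$ to be chosen. For $j\ge 0$ set
\[
r_j=\tfrac r2+\tfrac{r}{2^{j+1}},\qquad T_j=\tfrac T2+\tfrac{T}{2^{j+1}},\qquad k_j=k(1-2^{-j}),
\]
and $Q_j=B_j\times(-T_j,0)$ with $B_j=B(x_0,r_j)$. We take cutoffs $\varphi_j$ equal to $1$ on $Q_{j+1}$, supported in $Q_j$ and vanishing near the bottom of $Q_j$, with $g_{\varphi_j}\lesssim 2^j/r$ and $(\partial_t\varphi_j^p)_+\lesssim 2^j/T$. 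The quantity to iterate is
\[
Y_j=\frac{1}{\nu(Q_j)}\int_{Q_j}(u-k_j)_+^p\dnu .
\]

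\textbf{Energy step.} We apply \Cref{lemma: caccioppoli with cutoff} with level $k_{j+1}$ and with $t_0$ below the support of $\varphi_j$, so the initial term drops. On the set $\{u>k_{j+1}\}$ we have $u\ge k_{j+1}\ge k/2$ for $j\ge1$. Using the two-sided bound on $G_+$ for $w>k$,
\[
G_+(u,k_{j+1})\approx (u+k_{j+1})^{q-1}(u-k_{j+1})_+^2 .
\]
We need to convert the right-hand side $G_+(\partial_t\varphi^p)_+$ into $(u-k_j)_+^p$. Since $q+1<p$, on $\{u>k_{j+1}\}$ one has
\[
G_+(u,k_{j+1})\lesssim u^{q+1}\lesssim \Bigl(\tfrac{2^j}{k}\Bigr)^{p-q-1}(u-k_j)_+^p,
\]
because $u-k_j\ge k2^{-j-1}$ there, and similarly for $u\ge 2k$. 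Note also $u\lesssim 2^j(u-k_j)$ there. This gives
\[
\esssup_t \dashint_{B_{j+1}} G_+\,\varphi^p + \dashint\!\!\int g^p_{(u-k_{j+1})_+}\varphi^p \lesssim 2^{jc}\Bigl(\frac1{r^p}+\frac{k^{q+1-p}}{T}\Bigr)\int_{Q_j}(u-k_j)_+^p .
\]
On the left, $G_+\gtrsim$ a power of $(u-k_{j+1})_+^{q+1}$ (up to $k$-dependent factors when $q<1$), which controls the $L^\infty_tL^{q+1}_x$ norm of $(u-k_{j+1})_+\varphi$.

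\textbf{Sobolev and measure step.} We apply \Cref{lemma: par sob} to $(u-k_{j+1})_+\varphi_j$, which has zero boundary values on $B_j$. This bounds $\int(u-k_{j+1})_+^{p+\kappa(q+1)}$ over $Q_{j+1}$. Hölder's inequality together with Chebyshev's inequality,
\[
\nu(\{u>k_{j+1}\}\cap Q_j)\le (2^{j}/k)^p\int_{Q_j}(u-k_j)_+^p ,
\]
then gives a recursion of the form
\[
Y_{j+1}\le C b^j\,\frac{\bigl(\tfrac{T}{r^p}+k^{q+1-p}\bigr)^{1+\kappa}\cdots}{k^{\kappa(q+1)}}\,Y_j^{1+\kappa\frac{q+1}{p}}\cdots ,
\]
where doubling is used to compare the averages over $B_j$ and $B_{j+1}$.

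\textbf{Choice of $k$ and the main obstacle.} We choose
\[
k\ge\Bigl(\frac{r^p}{T}\Bigr)^{1/(p-q-1)},
\]
so that $k^{q+1-p}\le T/r^p$ and the intrinsic term is dominated by the spatial one. This is exactly the source of the second summand in the claim. With this choice the recursion becomes
\[
Y_{j+1}\le Cb^j\,k^{-\kappa(q+1)}\Bigl(\tfrac{T}{r^p}\Bigr)^{\kappa}\cdots Y_j^{1+\epsilon}.
\]
The fast geometric convergence lemma then yields $Y_j\to0$, i.e.\ $u\le k$ on $Q_\infty$, provided $Y_0\le$ the threshold. Tracking the exponents, this forces
\[
k\gtrsim\Bigl(\frac{T}{r^p}\dashint u_+^p\Bigr)^{1/(q+1)},
\]
and taking $k$ to be the sum of the two candidate levels gives the stated bound. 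The main obstacle is the bookkeeping of homogeneities. The energy term mixes $u^{q+1}$ with $(u-k)^p$, and the comparison between them needs the lower bound $u\gtrsim k$ on the relevant sets, the assumption $q+1<p$, and care with the degenerate factor $(u+k)^{q-1}$ when $q<1$. Getting the final exponent $1/(q+1)$ consistent with $Y_0$ measured in $L^p$ is where I expect to need the exact matching of $\kappa$ from \Cref{lemma: par sob}.
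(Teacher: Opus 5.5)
Your scheme is the paper's: the same levels $k_j=(1-2^{-j})k$ and shrinking cylinders, \Cref{lemma: caccioppoli with cutoff} combined with \Cref{lemma: par sob}, the choice $k\ge (r^p/T)^{1/(p-q-1)}$ to absorb the term $G_+(\partial_t\varphi^p)_+$ into $(u-k_j)_+^p$, and the fast geometric convergence lemma.

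One step does not work as written when $0<q<1$. You use the energy estimate at the same level $k_{j+1}$ at which you apply \Cref{lemma: par sob}, and you assert that $G_+(u,k_{j+1})$ controls $(u-k_{j+1})_+^{q+1}$ ``up to $k$-dependent factors''. Since $G_+(u,h)\approx (u+h)^{q-1}(u-h)_+^2$, the ratio $(u-h)_+^{q+1}/G_+(u,h)\approx (2h)^{1-q}(u-h)^{q-1}$ blows up as $u\downarrow h$ when $q<1$. So no constant works, however it depends on $k$. Near the level you only control an $L^2$-type quantity, whereas \Cref{lemma: par sob} needs $\esssup_t\int w^{q+1}\dmu$.

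The fix is what the paper does with the levels $k_n<k_n'<\tilde k_n<k_{n+1}$: apply the energy estimate at an intermediate level $k_j'=(k_j+k_{j+1})/2$. On $\{u>k_{j+1}\}$ one has $u-k_j'\ge 2^{-j-2}k$, hence $u+k_j'\le 2^{j+4}(u-k_j')$. Therefore, on that set (and trivially elsewhere),
\[
(u-k_{j+1})_+^{q+1}\le (u-k_j')_+^{q+1}\le c\,2^{j(1-q)}(u+k_j')^{q-1}(u-k_j')_+^2\le c\,2^{j(1-q)}G_+(u,k_j').
\]
Moreover $g_{(u-k_{j+1})_+}\le g_{(u-k_j')_+}$, and your conversion of the right-hand side into $(u-k_j)_+^p$ goes through unchanged at level $k_j'$. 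For $q\ge 1$ your step is fine as it stands, since then $G_+(u,h)\gtrsim (u-h)_+^{q+1}$.

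The exponent bookkeeping you were worried about closes with no special tuning of $\kappa$. You use H\"older plus Chebyshev instead of the paper's pointwise bound $(u-k_{n+1})_+^p\le (k_{n+1}-\tilde k_n)^{-\kappa(q+1)}(u-\tilde k_n)_+^{p+\kappa(q+1)}$. Your variant gives, for $Z_j=\dashint_{Q_j}(u-k_j)_+^p\dnu$,
\[
Z_{j+1}\le C b^j\Bigl(\frac{T}{r^pk^{q+1}}\Bigr)^{\epsilon}Z_j^{1+\epsilon},\qquad \epsilon=\frac{\kappa p}{p+\kappa(q+1)}.
\]
The paper's route gives the same recursion with $\epsilon=\kappa$. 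In either case $Z_j\to0$ as soon as $k^{q+1}\ge C'\frac{T}{r^p}Z_0$, which is exactly the first summand of the estimate. The exponent $1+\kappa\frac{q+1}{p}$ and the factor $k^{-\kappa(q+1)}$ in your displayed recursion mix the two routes and should be replaced by the above.
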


\begin{proof}
For $n\in \mathbb N$ and $k>0$, let
\begin{equation*}
k_n = (1-2^{-n})k, \quad \Tilde k_n=\frac{k_{n+1}+k_n}{2}, 
\quad r_n = \frac{r}{2} + \frac{r}{2^{n+1}}, \quad T_n = \frac{T}{2}+\frac{T}{2^{n+1}},
\end{equation*}
$B_n =B(x_0,r_n)$ and $Q_n= B(x_0,r)\times(t_0-T_n,t_0)$. At this stage, we choose $k$ such that 
\begin{equation}
    \label{eq: loc bdd first k choice}
    k \geq (r^p/T)^\frac{1}{p-q-1}
\end{equation}
Let $\eta_n$ be a $\frac{c}{r_{n}-r_{n+1}}$-Lipschitz cutoff function with $0\leq \eta\leq 1$, $\eta=1$ on $B_{n+1}$ and $\supp \eta \subset B_n$. Denote
\begin{equation*}
    \xi_n(t)
        =\begin{cases}
            0 ,&\quad t\leq t_0-T_n,\\
            \frac{1}{T_{n}-T_{n+1}}(t-t_0+T_n), & \quad t_0-T_n <t\leq t_0-T_{n+1}, \\
            1,&\quad t\geq t_0-T_{n+1},
        \end{cases}
\end{equation*}
and denote $\varphi_n=\eta_n\xi_n$. Applying \Cref{lemma: par sob} we have
\begin{equation}
\label{eq: loc bdd from par sob}
\begin{split}
    &\dashint_{Q_{n+1}}(u-k_{n+1})_+^p\dnu\\
    &\qquad= (k_{n+1}-\Tilde k_n)^{\kappa(q+1)} \dashint_{Q_{n+1}}(u-k_{n+1})_+^p(k_{n+1}-\Tilde k_n)^{\kappa(q+1)}\dnu \\
    &\qquad\leq ck^{\kappa(q+1)}2^{n\kappa(q+1)}\dashint_{Q_n}((u-\Tilde k_n)_+\varphi_n)^{p+\kappa(q+1)}\dnu \\
    &\qquad\leq ck^{\kappa(q+1)}2^{n\kappa(q+1)} r_n^p \dashint_{Q_n}g_{(u-\Tilde k_n)_+\varphi_n}^p\dnu
     \esssup_{t_0-T_n<t<t_0}\biggl(\dashint_{B_n}(u(t)-\Tilde k_n)_+^{q+1} \varphi_n^p\dmu\biggr)^{\kappa},
\end{split}
\end{equation}
where $\kappa>0$ is as in \Cref{lemma: par sob}. We claim that 
\begin{equation}
\label{eq: wanted est for loc bdd from cacciop}
    \begin{split}
     &\esssup_{t_0-T_n<t<t_0}\dashint_{B_n} (u(t)-\Tilde k_n)_+^{q+1} \varphi_n^p\dmu + T_n \dashint_{Q_n}g_{(u-\Tilde k_n)_+ \varphi_n }^p\dnu\\
     &\qquad\leq c  \frac{T_n}{r_n^p} 2^{(2p+2q+2)n}\dashint_{Q_n}(u-k_n)^p\dnu.
     \end{split}
\end{equation}
Let $h>0$. Applying \Cref{lemma: caccioppoli with cutoff} we have
\begin{equation}
\label{eq: general est for loc bdd from cacciop}
\begin{split}
    & \esssup_{t_0-T_n<t<t_0}\dashint_{B_n} (u(t) +h)^{q-1}(u(t)-h)_+^2 \varphi_n^p\dmu + r_n^p\dashint_{Q_n}g_{(u-h)_+}^p\varphi_n^p\dnu \\
    &\qquad \leq c 2^{pn}\dashint_{Q_n}(u-h)_+^p +\frac{r^p}{T_n}( u_+ +h )^{q-1}(u-h)_+^2\dnu.
\end{split}
\end{equation}
We consider two ranges of $q$ separately. First, assume $0<q<1$. Then
\begin{equation*}
\begin{split}
    (u-\Tilde k_n)_+^{q+1} & \leq (u+k'_n)^{q+1} \chi_{u>\Tilde k_n} 
     =(u+k'_n)^{q-1} ((u-k_n')_+ + k_n')^2\chi_{u>\Tilde k_n'} \\
    &\leq c  4^{n} (u+k_n')^{q-1} ((u-k_n')_+ + (\Tilde k_n-k_n'))^2\chi_{u>\Tilde k_n} \\
    &\leq c  4^{n} (u+k_n')^{q-1} (u-k_n')_+^2
    \leq c 4^n (u-k_n')_+^{q+1},
\end{split}
\end{equation*}
where $k_n'=(\Tilde k_n+k_n)/2$. The product rule and \eqref{eq: general est for loc bdd from cacciop} with $h=k_n'$  imply that
\begin{equation}
\label{eq: loc bdd q small cacciop before time scaling k appl}
\begin{split}
     &\esssup_{t_0-T_n<t<t_0}\dashint_{B_n} (u(t)-\Tilde k_n)_+^p \varphi_n^p\dmu + T_n\dashint_{Q_n}g_{(u-\Tilde k_n)_+\varphi_n}^p\dnu \\
     &\qquad\leq c 2^{2n}\esssup_{t_0-T_n<t<t_0}\dashint_{B_n} (u(t)+k_n')^{p-2}(u(t)-k_n')_+^2 \varphi_n^p\dmu\\
     &\qquad\qquad+ c T_n \dashint_{Q_n}g_{(u-k_n')_+}^p\varphi_n^p\dnu
      + c T_n\dashint_{Q_n} (u-\Tilde k_n)_+^p g_{\varphi_n}^p\dnu \\
     &\qquad \leq c \frac{T_n}{r_n^p} 2^{(p+2)n}\dashint_{Q_n}(u-k_n')_+^p+ \frac{r^p}{T}(u-k_n')_+^{q+1}\dnu.
\end{split}
\end{equation}
Recalling \eqref{eq: loc bdd first k choice}, we obtain
\begin{equation}
\label{eq: loc bdd comp k with time scaling for cacciop}
\begin{split}
    \dashint_{Q_n}\frac{r^p}{T}(u-k_n')_+^{q+1}\dnu & \leq \dashint_{Q_n} k^{p-q-1}(u-k_n')_+^{q+1}\dnu \\
    & \leq c 2^{n(p-q-1)} \dashint_{Q_n} (k_n'-k_n)^{p-q-1}(u-k_n')_+^{q+1}\dnu \\
    & \leq c 2^{n(p-q-1)} \dashint_{Q_n} (u-k_n)_+^p\dnu.
\end{split}
\end{equation}
Applying the above in \eqref{eq: loc bdd q small cacciop before time scaling k appl} shows \eqref{eq: wanted est for loc bdd from cacciop} in the case $0<q<1$. Then assume $1\leq q<\infty$. Then, it holds that
\begin{equation*}
\begin{split}
    (u-\Tilde k_n)_+^{q+1} &\leq (u+\Tilde k_n)^{q-1}(u-\Tilde k_n)_+^2 \leq c 2^{(q-1)n}(u-k_n')_+^{q+1}.
\end{split}
\end{equation*}
Applying \eqref{eq: general est for loc bdd from cacciop} with $h=\Tilde k_n$ implies 
\begin{equation*}
\begin{split}
     &\esssup_{t_0-T_n<t<t_0}\dashint_{B_n} (u(t)-\Tilde k_n)_+^p \varphi_n^p\dmu + T_n\dashint_{Q_n}g_{(u-\Tilde k_n)_+\varphi_n^p}^p\dnu \\
     &\qquad \leq \esssup_{t_0-T_n<t<t_0}\dashint_{B_n} (u(t)+\Tilde k_n)^{p-2}(u(t)-\Tilde k_n)_+^2 \varphi_n^p\dmu \\
     &\qquad\qquad+ cT_n\dashint_{Q_n}g_{(u-\Tilde k_n)_+}^p\varphi_n^p\dnu
     + c T_n\dashint_{Q_n}(u-\Tilde k_n)_+^pg_{\varphi_n}^p\dnu \\
     &\qquad\leq c \frac{T_n}{r_n^p} 2^{(p+q-1)n}\dashint_{Q_n}(u-k_n)_+^p\dnu.
\end{split}
\end{equation*}
Arguing similarly as in \eqref{eq: loc bdd comp k with time scaling for cacciop} we obtain that 
\begin{equation*}
\begin{split}
     &\esssup_{t_0-T_n<t<t_0}\dashint_{B_n} (u(t)-\Tilde k_n)_+^p \varphi_n^p\dmu + T_n\dashint_{Q_n}g_{(u-\Tilde k_n)_+\varphi_n^p}^p\dnu \\
     &\qquad\leq c \frac{T_n}{r_n^p} 2^{2(p+q-1)n}\dashint_{Q_n}(u-k_n)_+^p\dnu,
\end{split}
\end{equation*}
and hence \eqref{eq: wanted est for loc bdd from cacciop} holds in this case as well. 

We apply \eqref{eq: wanted est for loc bdd from cacciop} in \eqref{eq: loc bdd from par sob} to conclude that
\begin{equation*}
    \frac{T}{r^p }\dashint_{Q_{n+1}}(u-k_{n+1})_+^p\dnu \leq ck^{\kappa(q+1)}2^{\kappa(2p+3q+3)n}\biggl(\frac{T}{r^p}\dashint_{Q_n} (u-k_n)_+^p\dnu \biggr)^{1+\kappa}.
\end{equation*}
Denoting
\begin{equation*}
    Y_n= \frac{T}{r^p}\dashint_{Q_n}(u-k_n)_+^p\dnu 
    \quad\text{and}\quad 
    b=2^{\kappa(2p+3q+3)n},
\end{equation*}
we have shown
\begin{equation*}
    Y_{n+1}\leq ck^{\kappa(q+1)}b^n Y_n^{1+\kappa}
\end{equation*}
for every $n\in\mathbb N$.
Applying \cite[Lemma 5.1]{DiBenedettoGIanazzaVespri2012} we obtain $Y_n\to 0$ as $n\to \infty$ provided that 
\begin{equation*}
    Y_0 \leq c^{-1/\kappa} b^{-1/\kappa^2} k^{q+1}.
\end{equation*}
We have
\begin{equation*}
    \dashint_{B(x_0,r)\times(t_0-T,t_0)} (u-k)_+ \dnu \leq c\dashint_{Q^-_{r_n}(x_0,t_0)} (u-k)_+ \dnu \leq c \left(\frac{T}{r^p}\right)^pY_n^p,
\end{equation*}
for every $n\in\mathbb N$. Hence, for $k=cY_0+ \left( \frac{r^p}{T} \right)^\frac{1}{p-q-1}$, with $c$ large enough, the convergence $Y_n\to$ as $n\to\infty$ implies
\begin{equation*}
    \dashint_{B(x_0,r)\times(t_0-T,t_0)} (u-k)_+ \dnu = 0.
\end{equation*}
It follows that 
\begin{equation*}
    \esssup_{B(x_0,r)\times(t_0-T/2,t_0)} u \leq k = c \biggl(\frac{T}{r^p}\dashint_{B(x_0,r)\times(t_0-T,t_0)}u_+^p\dnu\biggr)^\frac{1}{q+1} + \left( \frac{r^p}{T} \right)^\frac{1}{p-q-1}.
\end{equation*}
\end{proof}

\subsection{Propagation of positivity in measure}
The following lemma shows that densities of level sets propagate in a time interval depending on the considered level. 
\begin{lemma}
\label{lemma: propagation of pos in meas}
Let $1<p<\infty$ and $0<q<\infty$. Assume that $\mu$ is doubling and that a $(1,p)$-Poincaré inequality holds in $(X,d,\mu)$. Let $u$ be a nonnegative supersolution in $\Omega_T$, $k>0$, $0<\alpha<1$ and $\sigma >1$. Then there exist constants $0<\delta_0<1$ and $0<\epsilon<1$ depending on $\alpha$, $\sigma$, $p$, $q$, and $c_\mu$, such that if $B(x_0,\sigma r)\times[t_0,t_0+\delta k^{q+1-p} r^p)\subset\Omega_T$, $0<\delta\leq \delta_0$, and $t_0$ is a Lebesgue point of $u$ with 
\begin{equation*}
    \mu( \{ u(\cdot,t_0)\geq k\}\cap B(x_0,r) ) \geq \alpha \mu(B(x_0,r)),
\end{equation*}
then 
\begin{equation*}
    \mu( \{ u(\cdot,t)\geq \epsilon k\}\cap B(x_0,r) ) \geq \frac{\alpha}{2} \mu(B(x_0,r)),
\end{equation*}
for almost every $t_0<t<t_0+\delta k^{q+1-p} r^p$.
\end{lemma}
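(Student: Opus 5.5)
The plan is the standard De Giorgi-type energy argument applied to truncations from below, $(u-k)_-$. First I would derive the supersolution analogue of \Cref{lemma: caccioppoli with cutoff} for $(u-k)_-$ with $G_-$, using a cutoff that depends only on space. Since the supersolution is nonnegative, $u$ and $-u$ are not interchangeable, so I would repeat the proof of \Cref{lemma: caccioppoli with cutoff} with the test function $\eta=\xi\varphi^p(u-k)_-\ge 0$. The monotonicity of $\lambda\mapsto(k-\lambda^{1/q})_+$ gives the needed sign in the mollification step. Take $\varphi=\varphi(x)$ with $0\le\varphi\le1$, $\varphi=1$ on $B(x_0,r)$, $\supp\varphi\subset B(x_0,\sigma r)$ and $g_\varphi\le c/((\sigma-1)r)$. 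With $t_0$ a Lebesgue point and $\partial_t\varphi=0$, this yields for a.e. $t\in(t_0,t_0+\delta k^{q+1-p}r^p)$
\begin{equation*}
\int_{B(x_0,r)}G_-(u(t),k)\dmu\leq \int_{B(x_0,\sigma r)}G_-(u(t_0),k)\dmu + \frac{c}{(\sigma-1)^pr^p}\int_{t_0}^{t}\int_{B(x_0,\sigma r)}(u-k)_-^p\dmu\ds .
\end{equation*}

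Next I estimate both sides using the two-sided bound for $G_\pm$ (Lemma 2.2 of \cite{BogeleinDuzaarLiao} quoted above). On the right-hand side, $0\le u$ gives $(u-k)_-\le k$, so the last term is at most $c(\sigma-1)^{-p}\delta k^{q+1-p}k^p\mu(B(x_0,\sigma r))=c\,\delta k^{q+1}\mu(B(x_0,\sigma r))$. This is where the intrinsic time scale $\delta k^{q+1-p}r^p$ matters: it makes the $k$-powers match. For the initial term, $G_-(u(t_0),k)\le c(u+k)^{q-1}(u-k)_-^2\le c k^{q+1}$, with $c=c(q)$ in both cases $q<1$ and $q\ge1$, since $u\le k$ on the support. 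The term vanishes on $\{u(t_0)\ge k\}$, so its integral over $B(x_0,r)$ is at most $G_-(0,k)(1-\alpha)\mu(B(x_0,r))$. Here I would use the exact value $G_-(0,k)=\int_0^{k^q}(k-s^{1/q})\ds=\frac{k^{q+1}}{q+1}$ rather than the comparable bound, so that the constant stays sharp.

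The main obstacle is the annulus $B(x_0,\sigma r)\setminus B(x_0,r)$, because a general $\sigma>1$ cannot be handled by annular decay. The cleanest fix is to take the cutoff with $\varphi=1$ on $B(x_0,(1-\theta)r)$ and support in $B(x_0,r)$, so the annulus lies inside $B(x_0,r)$. Then $g_\varphi\le c/(\theta r)$, and by \eqref{eq: geodesic doubling becomes annular decay} the annulus has measure $\le c\theta^\beta\mu(B(x_0,r))$; with this choice $\sigma$ only guarantees that the domain hypothesis is satisfied. For the left-hand side, $G_-(u(t),k)\ge G_-(\epsilon k,k)$ on $\{u(t)<\epsilon k\}$, because $G_-(\cdot,k)$ is decreasing on $[0,k]$. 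Moreover $G_-(\epsilon k,k)=k^{q+1}\int_{\epsilon^q}^1(1-s^{1/q})\ds\ge \frac{k^{q+1}}{q+1}(1-c\epsilon^{q})$. Dividing by $\frac{k^{q+1}}{q+1}$ then gives
\begin{equation*}
\mu(\{u(t)<\epsilon k\}\cap B(x_0,(1-\theta)r))\le \frac{(1-\alpha)+c\delta\theta^{-p}}{1-c\epsilon^q}\mu(B(x_0,r)).
\end{equation*}

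To conclude, add the annulus measure $c\theta^\beta\mu(B(x_0,r))$ to this bound. Choose $\theta$ so that $c\theta^\beta\le\alpha/8$, then $\delta_0$ so that $c\delta_0\theta^{-p}\le\alpha/8$, then $\epsilon$ small so that the denominator costs at most $\alpha/8$. This gives $\mu(\{u(t)<\epsilon k\}\cap B(x_0,r))\le(1-\alpha/2)\mu(B(x_0,r))$, which is the claim. The constants depend on $\alpha,p,q,c_\mu$, and on $\sigma$ only through the domain hypothesis. The delicate point to check is that the sharp-constant computations of $G_-$ genuinely leave room of order $\alpha$, which they do because $1-\alpha$ appears with exact coefficient $1$.
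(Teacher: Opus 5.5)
Your proposal is correct and follows essentially the paper's argument. The shared steps are: the energy estimate for $(u-k)_-$ with a purely spatial cutoff; the bound of the initial term by $(1-\alpha)$ times $G_-(0,k)$, with coefficient exactly one against the left-hand side; the intrinsic time length $\delta k^{q+1-p}r^p$, which makes the cutoff-gradient term of size $c\,\delta\,k^{q+1}$; and annular decay for the transition layer of the cutoff.

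The only difference is where that layer sits. The paper takes the cutoff equal to $1$ on $B(x_0,r)$ and supported in $B(x_0,(1+\eta)r)$ with small $\eta$, $1+\eta<\sigma$, so the layer is paid through $G_-(u(t_0),k)$ on the outer annulus. You put the layer inside $B(x_0,r)$ and pay for it at time $t$, which also makes your constants independent of $\sigma$.
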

\begin{proof}
Let $\eta>0$ with $1+\eta<\sigma$. Applying \Cref{lemma: caccioppoli with cutoff} we have
\begin{equation}
\label{eq: first est in prop of pos}
\begin{split}
    \int_{B(x_0,r)} G_{-}(u(t),k)\dmu & \leq \int_{B(x_0,{(1+\eta)r})}G_{-}(u(t_0),k)\dmu \\
    & \qquad + \frac{c}{\eta^p r^p}\int_{B(x_0,(1+\eta)r)\times (t_0, t_0 + \delta k^{q+1-p}r^p)} (u-k)_-^p\dmu\dt.
\end{split}
\end{equation}
We begin by estimating the  first term on the right hand side of the above. Splitting the integral into two parts, we obtain
\begin{equation*}
\begin{split}
    & \int_{B(x_0,{(1+\eta)r})}G_{-}(u(t_0),k)\dmu \\
    & \leq q k^{q+1} \mu(B(x_0,(1+\eta)r)\backslash \mu(B(x_0,r))) \\
    & \quad + q\int_{0}^{k} s^{q-1}(k-s)\ds\mu(\{u(t_0)<k\}\cap B(x_0,r)) \\
    & \leq c k^{q+1} \eta^\beta \mu(B(x_0,r)) + q\int_{0}^{k} s^{q-1}(k-s)\ds (1-\alpha)\mu(B(x_0,r)).
\end{split}
\end{equation*}
Here, we used \eqref{eq: geodesic doubling becomes annular decay} in the second inequality. We have that
\begin{equation*}
    q\int_{0}^{k} s^{q-1}(k-s)\ds \leq q \int_{\epsilon k}^{k} s^{q-1}(k-s)\ds + c(\epsilon k)^{q+1},
\end{equation*}
and hence 
\begin{equation*}
\begin{split}
    & \int_{B(x_0,{(1+\eta)r})}G_{-}(u(t_0),k)\dmu \\
    & \leq c k^{q+1} \eta^\beta \mu(B(x_0,r)) +  q \int_{\epsilon k}^{k} s^{q-1}(k-s)\ds(1-\alpha)\mu(B(x_0,r)) \\
    & \qquad + c(\epsilon k)^{q+1}\mu(B(x_0,r)).
\end{split}
\end{equation*}
We estimate the second term on the right hand side of \eqref{eq: first est in prop of pos} by
\begin{equation*}
    \frac{c}{\eta^p r^p}\int_{B(x_0,(1+\eta)r)\times (t_0, t_0 + \delta k^{q+1-p}r^p)} (u-k)_-^p\dmu\dt \leq \frac{c \delta_0}{\eta^p r^p} k^{q+1} \mu(B(x_0,r)).
\end{equation*}
The left hand side of \eqref{eq: first est in prop of pos} is estimated from below with 
\begin{equation*}
    \int_{B(x_0,r)} G_{-}(u(t),k)\dmu \geq q\int_{\epsilon k}^k s^{q-1}(k-s)\ds \mu(\{u(t)<\epsilon k\}\cap B(x_0,r)).
\end{equation*}
Finally, we observe that
\begin{equation*}
    q\int_{\epsilon k}^k s^{q-1}(k-s)\ds \geq \gamma k
\end{equation*}
for a constant $\gamma$ depending only on $q$ provided that $\epsilon \leq \frac{1}{4}$. Putting together the estimates we have obtained thus far we have 
\begin{equation*}
    \mu(\{u(t)<\epsilon k\}\cap B(x_0,r)) \leq \mu(B(x_0,r)) \left( (1-\alpha) + c\epsilon^p+ c \eta^\beta + \frac{c \delta_0}{\eta^p} \right).
\end{equation*}
Choosing $\epsilon$, $\eta$, and finally $\delta_0$ to be small enough, the previous estimate gives 
\begin{equation*}
    \mu(\{u(t)<\epsilon k\}\cap B(x_0,r)) \leq \Big(1-\frac{\alpha}{2}\Big)\mu(B(x_0,r)).
\end{equation*}
\end{proof}
We next show that densities of level sets can be improved.
\begin{lemma}
\label{lemma: shrinking lemma slow diffusion}
Let $1<p<\infty$ and $0<q<\infty$ with $q+1<p$. Assume that $\mu$ is doubling and that a $(1,p)$-Poincaré inequality holds in $(X,d,\mu)$. Let $u$ be a nonnegative supersolution in $\Omega_T$, $k>0$, $\alpha\in (0,1)$, $\sigma>1$ and  $0<\gamma<1$. Let $0<\delta\leq \delta_0$ and $\epsilon$ be as in \Cref{lemma: propagation of pos in meas}. Then there exists a constant $\lambda>0$ depending on $\alpha$, $\sigma$, $\gamma$, $\delta$, $p$, $c_\mu$, and $c_P$ such that if $B(x_0,\sigma r)\times [t_0,t_0+ \delta(\lambda k)^{q+1-p} r^p)\subset\Omega_T$ and $t_0$ is a Lebesgue point of $u$ with
\begin{equation}
    \label{eq: initial time assump in shrinking lemma}
    \mu(\{u(t_0) > k \}\cap B(x_0,r)) \geq \alpha \mu(B(x_0,r)),
\end{equation}
then
\begin{equation*}
    \nu\big(\big\{ u < \epsilon \lambda k\big\}\cap Q\big) \leq \gamma \nu(Q), 
\end{equation*}
where $Q=B(x_0,r)\times (t_0,t_0+\delta (\lambda k)^{q+1-p} r^p)$.
\end{lemma}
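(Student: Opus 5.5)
This is the standard DiBenedetto shrinking (measure-theoretic) lemma, adapted to the intrinsic time scale. The plan is to combine propagation of positivity in measure with a De Giorgi-type isoperimetric estimate on dyadic levels.

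\textbf{Step 1: propagation in measure on the long cylinder.} Let $\lambda=2^{-j}$ with $j\in\mathbb N$ to be chosen; since $\lambda<1$ and $q+1-p<0$, the cylinder $Q$ is long. Cover $(t_0,t_0+\delta(\lambda k)^{q+1-p}r^p)$ by consecutive intervals of length $\delta k^{q+1-p}r^p$. Apply \Cref{lemma: propagation of pos in meas} successively, restarting at a Lebesgue point near the end of each interval. This is the step I expect to be the main obstacle. Each restart lowers the level from $k$ to $\epsilon k$, and the time span then scales by $\epsilon^{q+1-p}>1$, which is favourable. The naive iteration, however, halves $\alpha$ each time. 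To avoid this, I would first try to run the energy estimate of \Cref{lemma: propagation of pos in meas} directly with level $\epsilon\lambda k$, but that lemma's time length is tied to the level used. The cleaner route is therefore to apply \Cref{lemma: propagation of pos in meas} once at level $k$ to get density $\alpha/2$ at level $\epsilon k$ on $[t_0,t_0+\delta k^{q+1-p}r^p)$, and then use the smallness of $Q$'s levels relative to $\epsilon k$ in the De Giorgi step. The key observation is that at any time slice where positivity holds, all lower levels $\epsilon k 2^{-i}$ inherit density $\alpha/2$. I would also try to handle the time range beyond $\delta k^{q+1-p}r^p$ by applying \Cref{lemma: propagation of pos in meas} with the level $\epsilon\lambda k\, 2^{i}$ in place of $k$, whose time span covers all of $Q$, provided the initial density at that level is available at $t_0$. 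This is guaranteed by \eqref{eq: initial time assump in shrinking lemma} since $2^i\epsilon\lambda k\le k$. The result should be that for $k_i=\epsilon k 2^{-i}$, $1\le i\le j$, one has $\mu(\{u(t)\ge k_i\}\cap B(x_0,r))\ge \frac{\alpha}{2}\mu(B(x_0,r))$ for a.e.\ $t$ in the time interval of $Q$.

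\textbf{Step 2: energy estimate.} Apply \Cref{lemma: caccioppoli with cutoff} (the $G_-$ version for supersolutions, as in the proof of \Cref{lemma: propagation of pos in meas}) to $(u-k_i)_-$. Use a cutoff equal to $1$ on $Q$, supported in $B(x_0,\sigma r)\times(t_0-\theta,t_0+\theta_\lambda)$, where $\theta_\lambda=\delta(\lambda k)^{q+1-p}r^p$; if necessary, use a one-sided time cutoff starting at $t_0$ and include the initial term. With $(u-k_i)_-\le k_i$ and $G_-(u,k_i)\le c k_i^{q+1}$, this gives
\begin{equation*}
\int_{Q} g_{(u-k_i)_-}^p\,d\nu \le \frac{c}{r^p}\,k_i^p\,\nu(Q)\Bigl(1+\frac{r^p k_i^{q+1-p}}{\theta_\lambda}\Bigr)\le \frac{c}{r^p}k_i^p\nu(Q).
\end{equation*}
Here I use $k_i\ge \epsilon\lambda k$, so that $k_i^{q+1-p}\le (\epsilon\lambda k)^{q+1-p}$ and hence $k_i^{q+1-p}r^p/\theta_\lambda\le \epsilon^{q+1-p}/\delta$, a constant.

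\textbf{Step 3: isoperimetric (De Giorgi) step and summation.} On each slice, apply the Poincaré inequality to $v=\min\{(u-k_{i+1})_-,\,k_{i+1}\}$-type truncations between the levels $k_{i+1}$ and $k_i$. This is the metric-space discrete isoperimetric lemma, which follows from \Cref{eq: Poincaré for function zero on large set} applied to $(k_i-\max(u,k_{i+1}))_+$, vanishing on a set of density at least $\alpha/2$. It gives
\begin{equation*}
k_{i+1}\,\mu(\{u(t)<k_{i+1}\}\cap B)\le c\,r\Bigl(\int_{B\cap\{k_{i+1}<u<k_i\}} g_u^p\Bigr)^{1/p}\mu(B)^{1-1/p}\ \text{(with } \tau\text{ enlargement absorbed by } \sigma\text{)}.
\end{equation*}
Integrate in time, use Hölder, and insert Step 2 to get $\nu(\{u<k_j\}\cap Q)^{p/(p-1)}\le c\,\nu(\{k_{i+1}<u<k_i\}\cap Q)\,\nu(Q)^{1/(p-1)}$. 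Summing over $i=1,\dots,j-1$ yields $(j-1)\,\nu(\{u<k_j\}\cap Q)^{p/(p-1)}\le c\,\nu(Q)^{p/(p-1)}$. Choosing $j$ with $c/(j-1)\le\gamma^{p/(p-1)}$ and setting $\lambda=2^{-j}$ gives the claim, since $k_j=\epsilon\lambda k$.
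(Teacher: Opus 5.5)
Your outline is the same as the paper's: dyadic levels $k_i=\epsilon k2^{-i}$ with $\lambda=2^{-j}$, propagation of positivity, the energy estimate with the initial term, a Poincar\'e inequality for functions vanishing on a large set, and telescoping over $i$. Your Step 2 is fine.

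\textbf{Main gap (Step 1).} You conclude that $\mu(\{u(t)\ge k_i\}\cap B(x_0,r))\ge\frac{\alpha}{2}\mu(B(x_0,r))$ for every $i\le j$ and a.e.\ $t$ in the time interval of $Q$. This does not follow.

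Started at $t_0$ from a level $M\le k$, \Cref{lemma: propagation of pos in meas} gives density $\alpha/2$ at level $\epsilon M$ only for $t_0<t<t_0+\delta M^{q+1-p}r^p$. To cover the whole interval of $Q$, of length $\delta(\lambda k)^{q+1-p}r^p$, you need $M\le\lambda k$. So the highest level with density on all of $Q$ is $\epsilon\lambda k=k_j$. For $i<j$, density at $k_i$ is obtained only on an initial portion of relative length $2^{-(j-i)(p-1-q)}$.

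Your proposed fix mixes up input and output levels. Using $\epsilon\lambda k2^i$ in place of $k$ gives density at $\epsilon^2\lambda k2^i$, not at $\epsilon\lambda k2^i$. Moreover, its span $\delta(\epsilon\lambda k2^i)^{q+1-p}r^p$ covers $Q$ only if $\epsilon2^i\le1$, so every level obtained this way is at most $\epsilon\lambda k$. Density passes from higher levels to lower ones, never the reverse.

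Without density at $k_i$, the function $(k_i-\max(u,k_{i+1}))_+$ need not vanish on a fixed fraction of $B(x_0,r)$ on most slices. The telescoping sum needs all $j$ isoperimetric estimates on the same cylinder $Q$, so it breaks down.

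The paper's written proof makes the same claim. It uses $\mu(\{u(t)>\epsilon k_i\}\cap B(x_0,r))\ge\mu(\{u(t)>\epsilon k_j\}\cap B(x_0,r))$, whose monotonicity is reversed since $k_i\ge k_j$. So following the paper does not close this gap.

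The missing idea is DiBenedetto's exponential change of time, as in \cite{DiBenedettoGianazzaVespri2008}: set $w(x,s)=k^{-1}e^{s/(p-1-q)}u(x,t_0+\delta k^{q+1-p}r^pe^{s})$.
\begin{itemize}
\item Applying the propagation lemma from the levels $M=ke^{-(s+1)/(p-1-q)}$ gives density $\alpha/2$ of $\{w(s)\ge\epsilon e^{-1/(p-1-q)}\}$ for all $s\ge0$. This level does not depend on $s$.
\item Formally, $\partial_s(w^q)-\delta r^p\div(\lvert\nabla w\rvert^{p-2}\nabla w)\ge\frac{q}{p-1-q}w^q\ge0$. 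So $w$ is again a nonnegative supersolution, and the shrinking argument runs in the variable $s$.
\item The outcome is a lower bound for $u$ carrying the factor $e^{-s/(p-1-q)}$. This is how expansion of positivity is obtained in the degenerate case, but it is not literally the statement above.
\end{itemize}

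\textbf{Second, more technical gap (Step 3).} Your slice estimate uses the $(1,p)$-inequality, so it has $\mu(B(x_0,r))^{1-1/p}$ on the right. After integrating in time and inserting Step 2, this only gives $k_{i+1}\nu(\{u<k_{i+1}\}\cap Q)\le c\,k_i\,\nu(Q)$. There is no factor $\nu(\{k_{i+1}<u<k_i\}\cap Q)$, so there is nothing to sum.

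The exponent $p/(p-1)$ you state would require an $L^1$-gradient isoperimetric inequality, that is, a $(1,1)$-Poincar\'e inequality, which is not assumed. As in the paper, first self-improve to a $(1,\tau)$-Poincar\'e inequality with $\tau<p$ (Keith--Zhong). Apply it on each slice and use H\"older with $g_v=g_u\chi_{\{k_{i+1}<u<k_i\}}$. This yields $\nu(\{u<k_{i+1}\}\cap Q)^{p\tau/(p-\tau)}\le c\,\nu(Q)^{p\tau/(p-\tau)-1}\,\nu(\{k_{i+1}<u<k_i\}\cap Q)$, and hence the decay $j^{-(p-\tau)/(p\tau)}$.
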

\begin{proof}
Let $k_i=\frac{k}{2^i}\eta$, where $i\in \{0,\dots,j\}$ and $j$ is to be chosen later, and let $\lambda=2^{-j}$. Denote $v=\max(u,k_i)-\max(u,k_{i+1})$. Applying \Cref{lemma: propagation of pos in meas} we find that 
\begin{equation*}
    \mu(\{u(t)>\epsilon k_i\} \cap B(x_0,r) ) \geq \mu(\{u(t) > \epsilon k_j \}\cap B(x_0,r)) \geq \frac{\alpha}{2}\mu(B(x_0,r)),
\end{equation*}
for all $t\in (t_0,t_0+\delta (\lambda k)^{q+1-p} r^p)$, where $\delta$ is as in \Cref{lemma: propagation of pos in meas}. Thus, we have
\begin{equation*}
    \mu(\{v(t) > 0\} \cap B(x_0,r) ) \leq \Big(1-\frac{\alpha}{2}\Big)\mu(B(x_0,r)),
\end{equation*}
for a.e. $t\in (t_0,t_0+\delta (\lambda k)^{q+1-p} r^p)$. For such $t$, we apply the Poincaré inequality from \Cref{eq: Poincaré for function zero on large set} with exponent $1\leq \tau<p$ on the right hand side which yields
\begin{equation*}
\begin{split}
    (k_i-k_{i+1})  \mu(\{u(\cdot,t)<k_{i+1}\}\cap B(x_0,r)) & \leq \int_{B(x_0,r)} v(t)\dmu \\
    & \leq c r \mu(B(x_0,r))^{1-\frac{1}{\tau}} \Big(\int_{B(x_0,r)}g_{v(t)}^\tau \dmu \Big)^\frac{1}{\tau}.
\end{split}
\end{equation*}
for a constant $c=c(\alpha,c_\mu,c_P,p,q)$. Integrating the above over $t\in (t_0,t_0+\delta (k_j)^{q+1-p} r^p)$ and applying Hölder's inequality yields
\begin{equation*}
\begin{split}
    (k_i-k_{i+1}) \nu(\{u<k_{i+1}\}  \cap Q) & \leq c r \mu(B(x_0,r))^{1-\frac{1}{\tau}} \int_{t_0}^{t_0+\delta r^p}\Big(\int_{B(x_0, r)}g_{v(t)}^\tau \dmu \Big)^\frac{1}{\tau} \dt \\
    & \leq cr \nu(Q)^{1-\frac{1}{\tau}} \left( \int_{Q} g_v^\tau \dmu\dt \right)^\frac{1}{\tau}.
\end{split}
\end{equation*}
Another application of Hölder's inequality gives 
\begin{equation*}
\begin{split}
    &(k_i-k_{i+1}) \nu(\{u<k_{i+1}\} \cap Q) \\
    & \leq cr \nu(Q)^{1-\frac{1}{\tau}} \left( \int_{Q} g_{(u-k_{i})_-}^p \dmu\dt \right)^\frac{1}{p} \nu(\{ k_{i+1}<u<k_i\}\cap Q )^{\frac{1}{\tau}-\frac{1}{p}}.
\end{split}
\end{equation*}
Here, we used the fact that $g_v=g_u \chi_{\{k_{i+1} < u <k_i \}}$. Applying \Cref{lemma: caccioppoli with cutoff} we obtain
\begin{equation*}
\begin{split}
    \int_{Q} g_{(u-k_i)_-}^p \dmu\dt & \leq \int_{B(x_0,\sigma r)\times (t_0,t_0+ c_1 k^{q+1}r^p)} \frac{1}{r^p} (u-k_i)^p_-\dmu\dt + c\int_{B(x_0,\sigma r)} G_{-}(u(t_0),k_i)\dmu \\
    & \leq c r^{-p} \nu(Q) k_i^p + c \mu (B(x_0,r)) k_i^{q+1} \\
    & = c r^{-p} \nu(Q) k_i^p + \nu(Q) \frac{1}{(\delta \lambda k)^{q+1-p}}k_i^{q+1}\\
    & \leq c r^{-p} \nu(Q)k_i^p.
\end{split}
\end{equation*}
for a constant $c=c(\alpha,\delta,c_\mu,c_P,p,q)$. The previous estimates imply 
\begin{equation*}
    \nu(\{u<k_{j+1}\} \cap Q) \leq c \nu(Q)^{1-\frac{1}{\tau}+\frac{1}{p}}  \nu(\{ k_{i+1}<u<k_i\}\cap Q )^{\frac{1}{q}-\frac{1}{p}}.
\end{equation*}
Taking powers, we deduce that 
\begin{equation*}
    \nu(\{u<k_{i+1}\} \cap Q)^{\frac{p\tau}{p-\tau}} \leq  c \nu(Q)^{\frac{p\tau}{p-\tau}-1}  \nu(\{ k_{i+1}<u<k_i\}\cap Q).
\end{equation*}
Summing the previous inequality over $i\in \{0,\dots,j\}$ gives
\begin{equation*}
    j \nu(\{u<k_{j}\} \cap Q)^{\frac{p\tau}{p-\tau}} \leq  c \nu(Q)^{\frac{p\tau}{p-\tau}},
\end{equation*}
and hence 
\begin{equation*}
    \nu(\{u<k_{j}\} \cap Q) \leq  c j^{-\frac{p-\tau}{p\tau}} \nu(Q).
\end{equation*}
The claim follows by choosing $j$ large enough so that $c j^{-\frac{p-\tau}{p\tau}}\leq \gamma$.
\end{proof}

\subsection{A De Giorgi--Di Benedetto lemma}
The following result is a De Giorgi--Di Benedetto lemma in the intrinsic geometry of the equation. 
\begin{lemma}
\label{lemma: de giorgi lemma}
Let $1<p<\infty$ and $0<q<\infty$. Assume that $\mu$ is doubling and that a $(1,p)$-Poincaré inequality holds in $(X,d,\mu)$. Let $k>0$, $\delta>0$, $\sigma>1$, $\mu^-\in \mathbb R$, and assume that $u$ is a supersolution in $Q^-_{\sigma r,\theta}(x_0,t_0)$, where $\theta=\delta k^{q+1-p}$, $\sigma r\leq \diam(X)/3$ and
\begin{equation*}
    u \geq \mu^- \text{ a.e. in } Q^-_{\sigma r,\theta}(x_0,t_0).
\end{equation*}
Let $a\in (0,1)$ and $M>1$. There exists a constant $\gamma\in (0,1)$ depending on $c_\mu, c_P$, $p$, $q$, $\delta$, $\sigma$, $a$ and  $M$, such that if 
\begin{equation}
    \label{eq: meas dens cond in de giorgi lemma}
    \nu(\{ u -\mu^-< k  \}\cap Q^-_{\sigma r,\theta}(x_0,t_0)) \leq \gamma \nu(Q^-_{\sigma r,\theta}(x_0,t_0))
\end{equation}
and 
\begin{equation}
    \label{eq: assumption for de giorgi lemma}
    \lvert \mu^-\rvert\leq M k,
\end{equation}
then 
\begin{equation*}
    u -\mu^-\geq a k,
\end{equation*}
a.e. in $Q^-_{ r,\theta}(x_0,t_0)$.
\end{lemma}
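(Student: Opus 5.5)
The plan is a standard De Giorgi iteration for the truncations $(u-\mu^- - k_n)_-$ on shrinking cylinders, using the energy estimate of \Cref{lemma: caccioppoli with cutoff} (applied to $-u$, i.e.\ its supersolution version with $G_-$) and the parabolic Sobolev embedding \Cref{lemma: par sob}. Set $w = u-\mu^-\ge 0$ and define levels $k_n = ak + \frac{(1-a)k}{2^n}$, radii $r_n = r + \frac{(\sigma-1)r}{2^n}$, and cylinders $Q_n = Q^-_{r_n,\theta}(x_0,t_0)$ (time length $\theta r_n^p$, shrinking to $\theta r^p$). Take cutoffs $\varphi_n=\eta_n\xi_n$ with $g_{\eta_n}\lesssim 2^n/((\sigma-1)r)$, and $\xi_n$ vanishing at the bottom of $Q_n$ with $\partial_t\xi_n\lesssim 2^{pn}/(\theta r^p)$. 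Let $Y_n = \nu(\{w<k_n\}\cap Q_n)/\nu(Q_n)$. The goal is to show $Y_{n+1}\le C b^n Y_n^{1+\kappa}$ with $C,b$ depending only on the data (including $\delta,\sigma,a,M$), where $\kappa$ is as in \Cref{lemma: par sob}. Then \cite[Lemma 5.1]{DiBenedettoGIanazzaVespri2012} gives $Y_n\to 0$ provided $Y_0\le \gamma := C^{-1/\kappa}b^{-1/\kappa^2}$. This is exactly hypothesis \eqref{eq: meas dens cond in de giorgi lemma}, and the conclusion $w\ge ak$ a.e.\ in $Q^-_{r,\theta}$ follows.

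In the energy estimate the truncation is $(u-(\mu^-+k_n))_-\le k_n\le k$. The time term involves $G_-(u,\mu^-+k_n)\approx (|u|+|\mu^-+k_n|)^{q-1}(u-\mu^--k_n)_-^2$. This is where \eqref{eq: assumption for de giorgi lemma} enters. On the set $\{w<k_n\}$ we have $|u|\le |\mu^-|+k\le (M+1)k$ and $|\mu^-+k_n|\le (M+1)k$. So for $q\ge1$ the weight is bounded above by $c(M)k^{q-1}$. For $q<1$ I would bound $G_-$ above via \eqref{eq: primitive for energy est after change of var} by $|(\mu^-+k_n)^q-u^q|\,(u-\mu^--k_n)_-$. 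This is bounded by $c\,k^{q-1}(\ldots)^2$ when $\mu^-\ge -k$ or so; in general I would expect a bound of the form $c(M)k^{q+1}\chi_{\{w<k_n\}}$. A crude bound suffices since we only need measure estimates. Thus the right-hand side of the energy estimate is at most
\[
c\,2^{pn}\Big(\frac{k^p}{r^p}+\frac{k^{q+1}}{\theta r^p}\Big)\nu(\{w<k_n\}\cap Q_n) = c\,2^{pn}\frac{k^p}{r^p}(1+\delta^{-1})\,\nu(\{w<k_n\}\cap Q_n),
\]
using $\theta=\delta k^{q+1-p}$. Here the intrinsic choice of $\theta$ balances the two terms.

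The main obstacle is the lower bound for the $\esssup$ term. We need $\sup_t\dashint (w-k_n)_-^{q+1}\varphi_n^p$ in \Cref{lemma: par sob}, but the energy controls $\int G_-$, which carries the degenerate or singular weight $(|u|+|\mu^-+k_n|)^{q-1}$. For $q\ge1$ the weight may vanish when $\mu^-\approx -k_n$. I would avoid this by applying \Cref{lemma: par sob} not with the power $q+1$ but with exponent $2$-type information. Concretely, I would rewrite the proof of \Cref{lemma: par sob} via Hölder with whatever $L^s$ norm is controlled. Alternatively, I would use the lower bound $G_-\ge c\,(k_n-w)_+^{q+1}$, valid since $(|u|+|\mu^-+k_n|)\ge |u-(\mu^-+k_n)|$ when $q\ge1$. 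For $q<1$ the weight is bounded below by $((2M+2)k)^{q-1}$, giving $G_-\ge c(M)k^{q-1}(w-k_n)_-^2\ge c(M)k^{q-1}\cdot k^{-(1-q)}\ldots$; more simply, I would use $(w-k_n)_-\le k$ to get $(w-k_n)_-^{q+1}\le k^{q-1}(w-k_n)_-^2\le c(M)G_-$. In both cases $\sup_t\dashint(w-k_n)_-^{q+1}\varphi_n^p\le c\,2^{pn}\frac{\theta k^{p}}{\delta\ldots}Y_n$, scaled consistently.

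With these ingredients, \Cref{lemma: par sob} applied to $(w-k_n)_-\varphi_n$ (zero lateral boundary values in $B_{r_n}$, $r_n\le\diam X/3$) gives the following. On the left, $(k_n-k_{n+1})^{p+\kappa(q+1)}Y_{n+1}$, with $k_n-k_{n+1}=(1-a)k2^{-n-1}$. On the right,
\[
c\,r^p\,\frac{1}{\theta r^p}\Big[2^{pn}\frac{k^p}{r^p}\theta r^p Y_n\Big]\Big[2^{pn}k^{q+1}Y_n\Big]^{\kappa},
\]
after normalizing by $\nu(Q_n)$ and using doubling to compare $\mu(B_{r_n})$ with $\mu(B_{r_{n+1}})$. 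The powers of $k$ and $\theta$ cancel exactly, which is the whole point of the intrinsic scaling. This yields $Y_{n+1}\le C(1-a)^{-p-\kappa(q+1)}b^nY_n^{1+\kappa}$, completing the argument.
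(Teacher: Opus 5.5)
\textbf{There is a genuine gap in the case $0<q<1$.} The rest of your scheme matches the paper: the energy upper bound, the cancellation of the powers of $k$ and $\theta$, and the fast geometric convergence step. The case $q\ge 1$ is also fine, because there $G_-(u,\mu^-+k_n)\ge c\,(w-k_n)_-^{q+1}$, so one level $k_n$ suffices in both the Sobolev step and the energy step.

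The failing step is your bound on the $\esssup$ factor when $0<q<1$. The inequality $(w-k_n)_-^{q+1}\le k^{q-1}(w-k_n)_-^2$ is backwards. For $0<s\le k$ and $q<1$ one has $s^{q+1}=s^{q-1}s^2\ge k^{q-1}s^2$, and $s^{q+1}\gg k^{q-1}s^2$ as $s\to 0$.

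The true weight does not rescue it either. The bound $(w-k_n)_-^{q+1}\lesssim G_-\approx(|u|+|\mu^-+k_n|)^{q-1}(w-k_n)_-^2$ would require $(w-k_n)_-\gtrsim|u|+|\mu^-+k_n|$. This fails when $u$ lies just below $\mu^-+k_n$ and $|\mu^-+k_n|\sim k$. So $\sup_t\dashint(w-k_n)_-^{q+1}\varphi_n^p$ is not controlled by the energy. Since your final recursion uses the $q+1$ version of \Cref{lemma: par sob} together with this false inequality, $Y_{n+1}\le Cb^nY_n^{1+\kappa}$ is not established for $q<1$.

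\textbf{How the paper fixes it.} The paper separates the levels. It applies \Cref{lemma: par sob} to $(w-\tilde k_n)_-\varphi_n$ with $\tilde k_n=(k_n+k_{n+1})/2$, but the energy estimate at level $k_n$. On $\{w<\tilde k_n\}$ you have $(w-k_n)_-\ge k_n-\tilde k_n=(1-a)k2^{-n-2}$ and $|u|+|\mu^-+k_n|\le 2(M+1)k$. Hence
\[
(w-\tilde k_n)_-^{q+1}\le k^{q+1}\chi_{\{w<\tilde k_n\}}\le c(M,a,q)\,4^{n}\,(|u|+|\mu^-+k_n|)^{q-1}(w-k_n)_-^2\le c\,4^{n}\,G_-(u,\mu^-+k_n).
\]
The extra $4^n$ is absorbed into $b^n$, and $g_{(w-\tilde k_n)_-}\le g_{(w-k_n)_-}$ handles the gradient term. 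The left side of the Sobolev step becomes $(\tilde k_n-k_{n+1})^{p+\kappa(q+1)}Y_{n+1}$.

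\textbf{An alternative fix.} You could also carry out the ``exponent $2$'' idea you only gestured at (and attached to the wrong case, $q\ge1$, where it fails because the weight can vanish). For $q<1$, replace $q+1$ by $2$ in \Cref{lemma: par sob}. The same Hölder interpolation gives $\dashint|v|^{p+2\kappa}\le(\dashint|v|^{p^*})^{p/p^*}(\dashint|v|^2)^{\kappa}$. Then use
\[
\sup_t\dashint(w-k_n)_-^2\varphi_n^p\le c(M,q)\,k^{1-q}\sup_t\dashint G_-\varphi_n^p,
\]
which is valid because the weight is bounded below by $(2(M+1)k)^{q-1}$. The powers of $k$ still cancel.

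Either fix is needed to close the argument for $0<q<1$.
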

\begin{proof}

Let $n\in \mathbb N$ and 
\begin{equation*}
      k_n = \mu^-+ak+\frac{1-a}{2^n}k, \quad \Tilde k_n=\frac{k_{n+1}+k_n}{2}, \quad r_n = r + \frac{\sigma-1}{2^{n}} r,
\end{equation*}
$B_n=B(x_0,r_n)$ and $Q_n=Q^-_{r_n,\theta}(x_0,t_0)$. Let $\eta_n$ be a $\frac{c}{r_n-r_{n+1}}$-Lipschitz cut-off function with $0\leq \eta\leq 1$, $\eta=1$ on $B_{n+1}$ and $\supp \eta \subset B_n$. Let
\begin{equation*}
    \xi_n(t)
        =\begin{cases}
            0 ,&\quad t\leq t_0-\theta r_{n}^p\\
            \frac{1}{\theta(r_n^p-r_{n+1}^p)}(t-t_0+\theta r_{n}^p), & \quad t_0-\theta r_n^p <t\leq t_0-\theta r_{n+1}^p \\
            1,&\quad t\geq t_0-\theta r_{n+1}^p,
        \end{cases}
\end{equation*}
and denote $\varphi_n=\eta_n\xi_n$. Let $\kappa$ be as in \Cref{lemma: par sob}. Applying \Cref{lemma: par sob} gives
\begin{equation}
\label{eq: de giorgi lemma est from par sob}
\begin{split}
    k^{p+\kappa(q+1)}\frac{\nu(A_{n+1})}{\nu(Q_{n+1})} & \leq c2^{n(p+\kappa(q+1))}\dashint_{Q_n}(u-\Tilde k_n)_-^{p+\kappa(q+1)}\varphi_n\dmu\dt \\
    & \leq c 2^{n(p+\kappa(q+1))} r_n^p\dashint_{Q_n}g_{(u-\Tilde k_n)_-\varphi_n}^p\dmu\dt \\
    & \qquad \cdot \esssup_{t_0-\theta r_n^p<t<t_0}\left(\dashint_{B_n}(u(t)-\Tilde k_n)_-^{q+1} \varphi_n^p \dmu\right)^{\kappa}.
\end{split}
\end{equation}
Let us estimate the right hand side of the above. Using \Cref{lemma: caccioppoli with cutoff} we have 
\begin{equation}
\label{eq: cacciop for de giorgi lemma}
\begin{split}
     \esssup_{t_0-\theta r_n^p<t<t_0}& \dashint_{B_n} (\lvert u(t) \rvert +\lvert k_n\rvert )^{q-1}(u(t)-k_n)_-^2 \varphi_n^p\dmu + \theta r_n^p \dashint_{Q_n}g_{(u-k_n)_-}^p \varphi_n^p\dmu\dt \\
    & \leq c \theta 2^{pn}\dashint_{Q_n}(u-k_n)_-^p +\theta^{-1}( \lvert u\rvert +\lvert k_n\rvert )^{q-1}(u-k_n)_-^2\dmu\dt \\
    & \leq c 2^{pn} k^{q+1} \frac{\nu(A_n)}{\nu(Q_n)},
\end{split}
\end{equation}
where in the last step we used \eqref{eq: assumption for de giorgi lemma}, and the definition of $\theta$. Let us compare $(u-\Tilde k_n)_-^{q+1}$ and $(\lvert u \rvert +\lvert k_n\rvert )^{q-1}(u-k_n)_-^2$. Assume first that $0<q<1$. Using \eqref{eq: assumption for de giorgi lemma} we have $\lvert u\rvert + \lvert k_n\rvert \leq c k$ on $\{u<\Tilde k_n\}$, and thus 
\begin{equation*}
\begin{split}
    (u-\Tilde k_n)_-^{q+1}\leq k^{q+1}\chi_{u<\Tilde k_n} & \leq c (\lvert u\rvert +\lvert k_n\rvert )^{q-1} k^2\chi_{u<\Tilde k_n} \\
    &\leq c 2^{2n}c (\lvert u\rvert +\lvert k_n\rvert )^{q-1} (u-k_n)_-^2.
\end{split}
\end{equation*}
For $q\leq 1 <\infty$, we simply estimate
\begin{equation*}
    (u-\Tilde k_n)_-^{q+1}\leq (u-k_n)_-^{q+1}\leq  (\lvert u\rvert +\lvert k_n\rvert )^{q-1} (u-k_n)_-^2.
\end{equation*}
In either case we conclude that 
\begin{equation*}
\begin{split}
    &\esssup_{t_0-\theta r_n^p<t<t_0}\dashint_{B_n} (u(t)-\Tilde k_n)_+^{q+1} \varphi_n^p\dmu\\
    & \leq c2^{2n}\esssup_{t_0-\theta r_n^p<t<t_0} \dashint_{B_n} (\lvert u(t) \rvert +\lvert k_n\rvert )^{q-1}(u(t)-k_n)_-^2 \varphi_n^p\dmu.
\end{split}
\end{equation*}
The previous estimate, \eqref{eq: cacciop for de giorgi lemma} and the product rule imply
\begin{equation}
\label{eq: wanted est for de giorgi lemma}
\begin{split}
    & \esssup_{t_0-\theta r_n^p<t<t_0}\dashint_{B_n} (u(t)-\Tilde k_n)_-^{q+1} \varphi_n^p\dmu + \theta r_n^p\dashint_{Q_n}g_{(u-\Tilde k_n)_- \varphi_n}^p\dmu\dt  \\
    & \leq  c2^{2n}\esssup_{t_0-\theta r_n^p<t<t_0} \dashint_{B_n} (\lvert u(t) \rvert +\lvert k_n\rvert )^{q-1}(u-k_n)_-^2 \varphi_n^p\dmu + \\
    & \qquad + c \theta r_n^p\dashint_{Q_n}g_{(u-k_n)_-}^p \varphi_n^p\dmu\dt + c\theta r_n^p\dashint_{Q_n}(u-k_n)_-^p g_{\varphi_n}^p\dmu\dt\\
    & \leq c 2^{(p+2)n} k^{q+1} \frac{\nu(A_n)}{\nu(Q_n)}.
\end{split}
\end{equation}
Applying \eqref{eq: wanted est for de giorgi lemma} in \eqref{eq: de giorgi lemma est from par sob} we find that 
\begin{equation}
    \label{eq: decay in de giorgi lemma proof}
    \frac{\nu(A_{n+1})}{\nu(Q_{n+1})}\leq c2^{((p+2)(1+\kappa)+p+\kappa(q+1))n}\left( \frac{\nu(A_n)}{\nu(Q_n)} \right)^{1+\kappa}.
\end{equation}
Denote $Y_n=\frac{\nu(A_{n+1})}{\nu(Q_{n+1})}$. Using \cite[Lemma 5.1]{DiBenedettoGIanazzaVespri2012} to \eqref{eq: decay in de giorgi lemma proof} we conclude that $Y_n\to 0$ as $n\to \infty$ provided that \eqref{eq: meas dens cond in de giorgi lemma} holds with $\gamma$ small enough. Since
\begin{equation*}
    \frac{\nu(\{u<\mu^-+ak\}\cap Q_{r,\theta}^-(x_0,t_0))}{\nu(Q_{ r,\theta}^-(x_0,t_0))} \leq c Y_n,
\end{equation*}
for all $n\in N$, the convergence $Y_n\to 0$ as $n\to\infty$ implies $\essinf_{Q_{r,\theta}^-(x_0,t_0)} u \geq \mu^-+ak$.
\end{proof}

\begin{remark}
\label{remark: resctrion on radius in de giorgi lemma can be removed}
The restriction $\sigma r\leq \diam(X)/3$ in \Cref{lemma: de giorgi lemma} can be removed. Thus, let $\delta>0$, $\sigma>1$, $\mu^-\in \mathbb R$, and assume that $u$ is a supersolution in $Q^-_{\sigma r,\theta}(x_0,t_0)$, where
\begin{equation*}
    u \geq \mu^- \text{ a.e. in } Q_{\sigma r,\theta}^-(x_0,t_0).
\end{equation*}
Let $0<a<1$ and $M>1$. We may assume $\sigma \leq 2$. If $r\leq \diam(X)/6$, we simply apply \Cref{lemma: de giorgi lemma}. Thus, we may assume that $\diam(X)/6\leq r \leq \diam(X)$. The doubling condition implies that there exists $0<\tau<1$ depending only on $c_\mu$, $p$ and $\sigma$, such that if 
\begin{equation*}
    \nu(\{ u -\mu^-< k  \}\cap Q_{\sigma r,\theta}^-(x_0,t_0)) \leq \tau \gamma \nu(Q_{\sigma r,\theta}^-(x_0,t_0))
\end{equation*}
then 
\begin{equation}
    \label{eq: meas dens cond for applying de giorgi in smaller cylinder}
    \nu(\{ u -\mu^-< k  \}\cap Q_{(\sigma -1) r/3,\theta}^-(z)) \leq  \gamma \nu(Q_{(\sigma-1)r/3,\theta}^-(z))
\end{equation}
for every $z\in Q_{r,\theta}(x_0,t_0)$. Since $(\sigma-1) r/3\leq \diam(X)/3$, we use \eqref{eq: meas dens cond for applying de giorgi in smaller cylinder} and apply \Cref{lemma: de giorgi lemma} in $Q_{(\sigma-1)r/3,\theta}^-(z)$ to conclude that $u -\mu^-\geq a k$ a.e. in $Q_{(\sigma-1)r/6,\theta}^-(z)$. Since $z$ was arbitrary, we conclude $u -\mu^-\geq a k$
a.e. in $Q_{r,\theta}^-(x_0,t_0)$.
\end{remark}

Arguing as in \cite{Liao2021}, the following corollary is implied by \Cref{lemma: de giorgi lemma}. We include the proof for the convenience of the reader.
\begin{corollary}
\label{cor: lower semicont of supersolutions}
Let $1<p<\infty$ and $0<q<\infty$ with $q+1<p$. Assume that $\mu$ is doubling and that a $(1,p)$-Poincaré inequality holds in $(X,d,\mu)$.
Let $u$ be a supersolution of \eqref{eq: PDE} in $\Omega_T$. Then the lower semicontinuous function 
\begin{equation*}
    u^*(z)=\lim_{r\to 0} \essinf_{Q_r^-(z)} u
\end{equation*}
agrees with $u$ $\nu$-almost everywhere in $\Omega_T$. The same conclusion holds for $p<q+1$ if $u$ is locally bounded from below.
\end{corollary}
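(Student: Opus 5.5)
The plan follows Liao's argument. Everything hinges on the De Giorgi--DiBenedetto lemma \Cref{lemma: de giorgi lemma} and \Cref{remark: resctrion on radius in de giorgi lemma can be removed}, applied with $\mu^-$ equal to a local essential infimum. Since $u^*(z)=\lim_{r\to0}\essinf_{Q_r^-(z)}u$ is a limit of essential infima over shrinking backward cylinders, it is lower semicontinuous and nondecreasing in $r\downarrow 0$. It also satisfies $u^*(z)\le u(z)$ at every point that is a Lebesgue point of $u$ with respect to backward cylinders; by doubling of $\nu$ on these cylinders, this holds $\nu$-a.e. So it remains to prove $u(z)\le u^*(z)$ at $\nu$-a.e. $z$. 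Fix a Lebesgue point $z=(x_0,t_0)$ of $u$ at which $u$ is finite, and, in the case $q+1<p$, at which $u$ is locally essentially bounded from below. Local boundedness from below for supersolutions follows from \Cref{prop: esssup estimate} applied to $-u$, or is assumed in the case $p<q+1$. Write $\lambda=u(z)$ and suppose, for contradiction, that $\lambda> u^*(z)+2\epsilon_0$ for some $\epsilon_0>0$.

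Next, choose the parameters. Set $k=\epsilon_0$ (more generally $k$ comparable to $\lambda-\mu^-$) and $\delta=1$, so $\theta=k^{q+1-p}$. For small $\rho$ set $\mu^-_\rho=\essinf_{Q^-_{2\rho,\theta}(z)}u$. This is finite and bounded uniformly in $\rho$, so $|\mu^-_\rho|\le Mk$ for some $M$ depending only on $z$ and $\epsilon_0$. This verifies \eqref{eq: assumption for de giorgi lemma}. Since $Q^-_{\rho,\theta}(z)$ and $Q^-_{\rho}(z)$ are comparable backward cylinders shrinking to $z$ (the factor $\theta$ is fixed), the Lebesgue point property gives
\begin{equation*}
\frac{\nu(\{u<\lambda-\epsilon_0\}\cap Q^-_{2\rho,\theta}(z))}{\nu(Q^-_{2\rho,\theta}(z))}\to 0 \quad\text{as }\rho\to0.
\end{equation*}
Because $\mu^-_\rho\le u^*(z)+o(1)$, we have $\mu^-_\rho+k\le\lambda-\epsilon_0$ for small $\rho$. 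Hence $\{u-\mu^-_\rho<k\}\subset\{u<\lambda-\epsilon_0\}$, and the measure density condition \eqref{eq: meas dens cond in de giorgi lemma} holds with the $\gamma$ given by \Cref{lemma: de giorgi lemma} for $\sigma=2$ and $a=1/2$.

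Now apply the lemma. It gives $u\ge\mu^-_\rho+k/2$ a.e. in $Q^-_{\rho,\theta}(z)$, so $\essinf_{Q^-_{\rho,\theta}(z)}u\ge \mu^-_\rho+\epsilon_0/2$. To conclude, compare with $u^*$. For fixed $\theta$ the infima over $Q^-_{\rho,\theta}$ also converge to $u^*(z)$ as $\rho\to0$, since for any $\theta$ the families $Q^-_{\rho,\theta}$ and $Q^-_{\rho'}$ are nested into each other with $\rho'\sim\rho$. Likewise $\mu^-_\rho\to u^*(z)$. Letting $\rho\to0$ yields $u^*(z)\ge u^*(z)+\epsilon_0/2$, a contradiction. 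Hence $u(z)\le u^*(z)+2\epsilon_0$ for every $\epsilon_0$, which proves the claim.

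I expect the main obstacle to be the bookkeeping of the intrinsic time scale. The lemma's cylinder depends on $k$ through $\theta=\delta k^{q+1-p}$, while $u^*$ is defined with the unscaled cylinders $Q_r^-$. The key point is that $k$ is fixed before $\rho\to0$, so $\theta$ is a fixed constant and the two families of cylinders are mutually comparable. A second delicate point is the uniform bound $|\mu^-|\le Mk$ needed in \eqref{eq: assumption for de giorgi lemma}, which is exactly where local lower boundedness of $u$ enters; this is why the case $p<q+1$ requires that extra hypothesis. Finally, the restriction $\sigma r\le\diam(X)/3$ is harmless here because $\rho\to0$.
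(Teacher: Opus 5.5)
Your proposal is correct and is essentially the paper's argument: at a Lebesgue point you take $\mu^-$ to be a local essential infimum, get the smallness condition \eqref{eq: meas dens cond in de giorgi lemma} from the Lebesgue point property, and use \Cref{lemma: de giorgi lemma} to push the infimum strictly above $u^*(z)$. The differences are cosmetic: the paper fixes $\mu^-=\essinf_{Q^-_R(z)}u$, normalizes $\theta=1$ via $\delta=k^{p-q-1}$, and chooses $k,a$ so that $\mu^-+ak$ and $\mu^-+k$ lie strictly between $u^*(z)$ and $u(z)$, whereas you let $\mu^-_\rho$ depend on $\rho$ and take $\delta=1$, $a=\frac12$. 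One aside: your claim that $u^*$ is lower semicontinuous just because it is a limit of infima over shrinking backward cylinders does not follow from the definition, since for $z_n\to z$ from the future $Q^-_{r'}(z_n)\not\subset Q^-_r(z)$; the paper does not prove this either, and it is not used for the a.e.\ identity.
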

\begin{proof}
By \Cref{prop: esssup estimate}, a supersolution is locally bounded from below. Let $z\in\Omega_T$ be a Lebesgue point of $u$. Clearly $u^*(z)\leq u(z)$. If $u(z)\leq u^*(z)$ we are done, so assume that $u^*(z)<u(z)$. Let $R>0$ be such that $Q^-_{R}(z)\Subset\Omega_T$ and $\mu^-=\essinf_{Q^-_{R}(z)} u$. Since $\mu^-\leq u^*(z)<u(z)$ there exist $k>0$ and $0<a<1$ such that
\begin{equation*}
    \mu^- + k < u(z) 
    \quad\text{and}\quad 
    \mu^-+ a k > u^*(z).
\end{equation*}
Denote $M=\frac{\lvert \mu^-\rvert}{k}+1$. Let $\gamma$ be as in \Cref{lemma: de giorgi lemma} corresponding to the parameters $\delta=k^{p-q-1}$, $\sigma=2$, $a$ and $M$. We claim that 
\begin{equation}
    \label{eq: density for applying de giorgi lemma for lower semicont}
    \nu(\{u<\mu^-+k\}\cap Q^-_{\rho}(z))<\gamma \nu(Q^-_\rho(z)),
\end{equation}
for some $0<\rho<R/2$. If \eqref{eq: density for applying de giorgi lemma for lower semicont} does not hold, then 
\begin{equation*}
\begin{split}
    \int_{Q^-_{\rho}(z)} \lvert u(z)-u\rvert \dnu 
    & \geq (u(z)-(\mu^-+k) \nu(\{u< \mu^-+k\}\cap Q^-_{\rho}(z)) \\
    & \geq (u(z)-(\mu^-+k))\gamma \nu(Q^-_\rho(z)),
\end{split}
\end{equation*}
for every $0<\rho<R$. Letting $\rho\to 0$ this contradicts the assumption that $z$ is a Lebesgue point. Hence, we conclude that \eqref{eq: density for applying de giorgi lemma for lower semicont} holds for some $0<\rho<R/2$ and apply \Cref{lemma: de giorgi lemma} to conclude that 
\begin{equation*}
    u \geq \mu^-+ ak>u^*(z) 
\end{equation*}
$\nu$-almost everywhere in  $Q^-_{\frac{\rho}{2}}(z)$.
This contradicts the definition of $u^*(z)$, and hence we conclude that $u(z)\leq u^*(z)$.
\end{proof}

\subsection{Expansion of positivity}
In the following result, we obtain an expansion of positivity result in an intrinsic geometry.
\begin{proposition}
\label{prop: expansion of pos}
Let $1<p<\infty$ and $0<q<\infty$ with $q+1<p$. Assume that $\mu$ is doubling and that a $(1,p)$-Poincaré inequality holds in $(X,d,\mu)$. Let $u$ be a nonnegative supersolution in $\Omega_T$, $k>0$, $\alpha\in (0,1)$, and $\sigma>1$. There exist constants $b>1$ and $\lambda>0$, depending on $\alpha$, $\sigma$, $p$, $q$, $c_\mu$ and $c_P$, such that if  $B(x_0,\sigma r)\times[t_0,t_0+b k^{q+1-p}r^p)\subset\Omega_T$ and $t_0$ is a Lebesgue point of $u$ with 
\begin{equation}
    \label{eq: assumption in exp of pos}
    \mu( \{u(t_0)<k\}\cap B(x_0,r)) \leq \alpha \mu(B(x_0,r))
\end{equation}
then $u \geq \lambda k$ almost everywhere in $B(x_0,r)\times(t_0+2^{-p} b k^{q+1-p} r^p,t_0 b k^{q+1-p}r^p)$.
\end{proposition}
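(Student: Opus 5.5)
The plan is to chain the three preceding results: propagation of positivity in measure (\Cref{lemma: propagation of pos in meas}), the shrinking lemma (\Cref{lemma: shrinking lemma slow diffusion}), and the De Giorgi--DiBenedetto lemma (\Cref{lemma: de giorgi lemma} together with \Cref{remark: resctrion on radius in de giorgi lemma can be removed}). The key point is to do this in the intrinsic geometry adapted to the smaller level $\lambda k$. Assumption \eqref{eq: assumption in exp of pos} gives $\mu(\{u(t_0)\ge k\}\cap B(x_0,r))\ge(1-\alpha)\mu(B(x_0,r))$. So we are in the setting of the two measure-theoretic lemmas with density parameter $1-\alpha$ in place of $\alpha$.

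\textbf{Step 1.} First fix $\delta=\delta_0$ and $\epsilon$ from \Cref{lemma: propagation of pos in meas}, with density $1-\alpha$ and the given $\sigma$ (possibly replaced by $\sqrt{\sigma}$ to leave room for a later enlargement). Next fix $\gamma$ from \Cref{lemma: de giorgi lemma}, applied with $a=1/2$, $\mu^-=0$ (allowed since $u\ge0$, and then $M$ is harmless) and time scale parameter $\delta$. Apply \Cref{lemma: shrinking lemma slow diffusion} with this $\gamma$; it produces $\lambda_1=2^{-j}$ such that
\[
\nu(\{u<\epsilon\lambda_1 k\}\cap Q)\le\gamma\,\nu(Q),\qquad Q=B(x_0,r)\times\bigl(t_0,t_0+\delta(\lambda_1k)^{q+1-p}r^p\bigr).
\]
Set $\tilde k=\epsilon\lambda_1 k$ and $b=\delta(\lambda_1)^{q+1-p}$, so that $Q$ has time length $bk^{q+1-p}r^p$. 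Since $q+1<p$, the intrinsic length $\delta\tilde k^{q+1-p}r^p=\epsilon^{q+1-p}\,bk^{q+1-p}r^p$ is larger than the length of $Q$ by the factor $\epsilon^{q+1-p}>1$. This mismatch is the main obstacle: the De Giorgi lemma wants a cylinder $Q^-_{\sigma r,\theta}$ with $\theta=\delta'\tilde k^{q+1-p}$. I would handle it by running \Cref{lemma: de giorgi lemma} with the parameter $\delta'=\delta\epsilon^{p-q-1}$ instead of $\delta$. Then $\theta r^p=\delta'\tilde k^{q+1-p}r^p$ is exactly the length of $Q$. Consequently $\gamma$ must be chosen depending on $\delta'$, which is legitimate because $\epsilon$ and $\delta$ are fixed before $\gamma$, and $\gamma$ is fixed before $\lambda_1$.

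\textbf{Step 2.} Apply the De Giorgi lemma to backward cylinders ending at times in the upper part of $Q$. The cylinder $Q$ itself equals $Q^-_{r,\theta}(x_0,t_0+\theta r^p)$, but the De Giorgi lemma needs the measure condition on a larger ball $\sigma r$ and gives the conclusion on the smaller one. To fix this, I would apply Step 1 on $B(x_0,\sigma r)$, transferring the initial density from $B(x_0,r)$ to $B(x_0,\sigma r)$ by doubling, which only changes $\alpha$ to some $\alpha'$ depending on $c_\mu$ and $\sigma$. The shrinking estimate on $B(x_0,\sigma r)\times(t_0,t_0+\theta(\sigma r)^p)$ then yields the measure condition, and the conclusion holds on $B(x_0,r)\times(t_0+\theta((\sigma r)^p-r^p),t_0+\theta(\sigma r)^p)$.

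\textbf{Step 3.} To get the precise window $(t_0+2^{-p}bk^{q+1-p}r^p,\,t_0+bk^{q+1-p}r^p)$, I would apply the De Giorgi lemma with the radius $r/2$ in place of $r$ and the enlargement factor $2$. The backward cylinder $Q^-_{r/2,\theta}$ is centered at each terminal time $s$ in the window, and the measure condition, for $\gamma$ replaced by a doubling-adjusted $\tau\gamma$ as in \Cref{remark: resctrion on radius in de giorgi lemma can be removed}, is inherited from the global smallness on $Q$. Covering $B(x_0,r)$ by such balls gives $u\ge\tfrac12\tilde k=:\lambda k$ with $\lambda=\tfrac12\epsilon\lambda_1$. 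All constants depend only on $\alpha,\sigma,p,q,c_\mu,c_P$.
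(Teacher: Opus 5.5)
Your core chain matches the paper's proof. You transfer the initial density to an enlarged ball $B(x_0,\tau r)$ by doubling, make $\{u<\epsilon\lambda_1 k\}$ $\gamma$-small there via \Cref{lemma: shrinking lemma slow diffusion}, and feed this into \Cref{lemma: de giorgi lemma} with $\mu^-=0$, $a=\tfrac12$. Your device $\delta'=\delta\epsilon^{p-q-1}$ matches the intrinsic scale of the level $\tilde k=\epsilon\lambda_1 k$ to the time scale $(\lambda_1 k)^{q+1-p}$ on which smallness is known. That is exactly what is needed, and fixing $\epsilon,\delta$ first, then $\gamma$, then $\lambda_1$ avoids circularity. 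The paper's write-up is loose at this point: it states the De Giorgi step with hypothesis at level $\epsilon h$ on a cylinder of length $\delta_0h^{q+1-p}r^p$ and conclusion $u\ge h/2$. Your version, with $\lambda=\tfrac12\epsilon\lambda_1$, is the rigorous one.

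You depart from the paper in how you reach the time window, and your Step 3 does not work as written. With inner radius $r/2$ and enlargement $2$, every backward cylinder has length $\theta r^p$. Under your normalization $bk^{q+1-p}r^p=\theta r^p$, that is the whole available time span. So the only admissible terminal time is $t_0+\theta r^p$, and the conclusion reaches down only to $t_0+(1-2^{-p})\theta r^p>t_0+2^{-p}\theta r^p$. In addition, the balls $B(y,r)$ with $y\in B(x_0,r)$ lie inside the ball where smallness is known only if that ball has radius at least $2r$.

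Step 3 can be repaired with inner radius $cr$, where $c=c(p,\sigma)$ is small: $(2^p-1)c^p\le 2^{-p}$ and $r+2cr$ at most the radius of the smallness ball. The value of $\gamma$ then gets a doubling factor depending on $c$.

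The paper avoids the covering altogether. It takes the enlargement factor $\tau\in(1,\min(\sqrt\sigma,2)]$ with $\tau^{-p}\ge 1-2^{-p}$. Then your Step 2, a single De Giorgi application on $B(x_0,\tau r)\times(t_0,t_0+\theta(\tau r)^p)$, already gives $u\ge\tfrac12\tilde k$ on $B(x_0,r)\times(t_0+(1-\tau^{-p})\theta(\tau r)^p,\,t_0+\theta(\tau r)^p)$, which contains the required window.

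Two small bookkeeping points. To stay within the domain hypothesis, run the shrinking lemma on $B(x_0,\tau r)$ with $\delta/\tau^p$ in place of $\delta$. This is allowed because $\epsilon$ does not depend on $\delta\le\delta_0$, and it makes its cylinder have length $\delta(\lambda_1k)^{q+1-p}r^p=bk^{q+1-p}r^p$. Finally, $b>1$ can be arranged by enlarging $j$.
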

\begin{proof}
Let $h>0$ and let $1<\tau \leq \min(\sqrt{\sigma},2)$ with $\frac{1}{\tau^p}\geq (1-2^{-p})$. Let $\epsilon$ and $\delta_0$ be as in \Cref{lemma: propagation of pos in meas}. Applying \Cref{lemma: de giorgi lemma} with parameters  $\delta_0/\tau^p$, $\tau$, $a=\frac{1}{2}$, there exists a constant $\gamma$ depending on $c_\mu$, $ c_P$, $p$, $q$, and $\sigma$ such that if 
\begin{equation}
\begin{split}
    \nu(\{ & u < \epsilon h \}\cap B(x_0,\tau r)\times(t_0,t_0+\delta_0 h^{q+1-p}r^p)) \\
    &\leq \gamma \nu(B(x_0,\tau r)\times(t_0,t_0+\delta_0 h^{q+1-p}r^p)),
\end{split}
\end{equation}
then 
\begin{equation}
    \label{eq: applying de giorgi lemma for exp of pos}
    u \geq \frac{1}{2}h \text{ a.e. in } B(x_0,r)\times(t_0+2^{-p}\delta_0(h)^{q+1-p} r^p,t_0+\delta_0 h^{q+1-p} r^p).
\end{equation}
On the other hand, the assumption \eqref{eq: assumption in exp of pos} implies that
\begin{equation*}
    \mu(\{u(t_0)<k\}\cap B(x_0,\tau r))\leq \Big(1-\frac{1-\alpha}{c_\mu}\Big)\mu(B(x_0,\tau r)),
\end{equation*}
and hence \Cref{lemma: shrinking lemma slow diffusion} gives 
\begin{equation}
\begin{split}
    \nu(\{ u & < \epsilon \lambda k  \}\cap B(x_0,\tau r)\times(t_0,t_0+\delta_0 (\lambda k)^{q+1-p}r^p)) \\
    &\leq \gamma \nu(B(x_0,\tau r)\times(t_0,t_0+\delta_0 (\lambda k)^{q+1-p}r^p)),
\end{split}
\end{equation}
for some $\lambda$ as in the statement. Applying \eqref{eq: applying de giorgi lemma for exp of pos} with $h=\lambda k$ shows the claim.
\end{proof}
\subsection{Parabolic Harnack inequality}
We now combine the previous results to show the intrinsic parabolic Harnack inequality.
\begin{theorem}
\label{thm: PI implies PHI}
Let $1<p<\infty$ and $0<q<\infty$ with $q+1<p$. Assume that $\mu$ is doubling and that a $(1,p)$-Poincar\'e inequality holds in $(X,d,\mu)$. Let $u$ be a nonnegative solution to \eqref{eq: PDE} in $\Omega_T$. Then, there exist constants $c_1,c_2>0$ depending on $p$, $q$, $c_\mu$ and $c_P$ such that for almost every $(x_0,t_0)\in \Omega_T$, if $u(x_0,t_0)>0$, $r>0$ and 
$B(x_0,4r)\times (t_0-c_1 u(x_0,t_0)^{q+1-p}(4r)^p,t_0+c_1 u(x_0,t_0)^{q+1-p}(4r)^p)\subset \Omega_T$, we have 
\begin{equation*}
    u(x_0,t_0) \leq c_2 \essinf_{Q^+} u,
\end{equation*}
where $Q^+=B(x_0,r)\times (t_0+c_1 u(x_0,t_0)^{q+1-p}/2 r^p,t_0+c_1 u(x_0,t_0)^{q+1-p} r^p)$.
\end{theorem}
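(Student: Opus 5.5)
The plan is to follow the DiBenedetto--Gianazza--Vespri approach to the intrinsic Harnack inequality. We combine the local boundedness estimate of \Cref{prop: esssup estimate} with the expansion of positivity in \Cref{prop: expansion of pos}, using a Krylov--Safonov type covering argument to locate a point where $u$ is large on a set of positive density. Fix a point $(x_0,t_0)$ that is a Lebesgue point, at which $u(x_0,t_0)=u^*(x_0,t_0)$ by \Cref{cor: lower semicont of supersolutions} (applied to $u$ and, after the sup bound, to $-u$ as a subsolution). Set $M_0=u(x_0,t_0)$, $\theta=M_0^{q+1-p}$, and rescale the statement so that $r$ and $M_0$ play the role of the unit scales.

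\textbf{Step 1 (finding a large value with a controlled supremum).} For $s\in[0,1)$, consider the nested cylinders $Q_s=B(x_0,sr)\times(t_0-c_1\theta (sr)^p,t_0]$ and the quantities $M_s=\esssup_{Q_s}u$ and $N_s=M_0(1-s)^{-\beta}$, with $\beta>0$ large to be chosen. Since $M_0=N_0$ and $M_s$ is finite while $N_s\to\infty$, there is a largest $s_0$ with $M_{s_0}=N_{s_0}$. By the semicontinuity in \Cref{cor: lower semicont of supersolutions} (upper semicontinuity for the subsolution), we pick $(\bar x,\bar t)\in \overline{Q_{s_0}}$ with $u(\bar x,\bar t)\ge \tfrac12 N_{s_0}$ (up to a.e.\ issues). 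Let $\rho=(1-s_0)r/2$. On the intrinsic cylinder $B(\bar x,\rho)\times(\bar t-\bar\theta\rho^p,\bar t]$, with $\bar\theta=c\,N_{s_0}^{q+1-p}$, we have $u\le M_{(1+s_0)/2}<N_{(1+s_0)/2}=2^\beta N_{s_0}$. Because $q+1<p$, the intrinsic time scale $N^{q+1-p}\rho^p$ for larger $N$ is shorter, so this smaller cylinder fits inside $Q_{(1+s_0)/2}$ for an appropriate choice of $c_1$ and $\beta$. This is where the slow diffusion direction of the scaling is used.

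\textbf{Step 2 (positivity in measure at one time).} We apply the De Giorgi lemma (\Cref{lemma: de giorgi lemma}) in its ``sup'' form to the subsolution $2^\beta N_{s_0}-u$. Equivalently, we use \Cref{prop: esssup estimate} contrapositively. If $u(\cdot,\bar t)\ge \epsilon N_{s_0}$ only on a set of density smaller than $\alpha$ in $B(\bar x,\eta\rho)$ at all times in the small cylinder, the supremum bound would force $u(\bar x,\bar t)<\tfrac12N_{s_0}$, a contradiction. Hence there is a time $t_1$ near $\bar t$, which can be taken to be a Lebesgue point, with
\[
\mu(\{u(t_1)\ge \epsilon N_{s_0}\}\cap B(\bar x,\eta\rho))\ge \alpha\,\mu(B(\bar x,\eta\rho)),
\]
where $\alpha,\epsilon,\eta$ depend only on the data and on $\beta$. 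The additive term $(r^p/T)^{1/(p-q-1)}$ in \Cref{prop: esssup estimate} is harmless precisely because the cylinder is intrinsic to the level $N_{s_0}$.

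\textbf{Step 3 (expansion of positivity).} Starting from the ball $B(\bar x,\eta\rho)$ at time $t_1$, we apply \Cref{prop: expansion of pos} repeatedly with doubling radii $2^j\eta\rho$ until the balls cover $B(x_0,r)$. The number of steps $n$ is of order $\log(r/\rho)$, that is $\log(1/(1-s_0))$. Each step reduces the level by the factor $\lambda$, so the final level is $\lambda^n\epsilon N_{s_0}\approx \epsilon M_0(1-s_0)^{-\beta+\log_2(1/\lambda)}$. Choosing $\beta\ge\log_2(1/\lambda)$ makes this at least $cM_0$, independently of $s_0$. We must also check that the accumulated waiting times $\sum_j b(\lambda^j\epsilon N_{s_0})^{q+1-p}(2^j\eta\rho)^p$ land in $[t_0+\tfrac{c_1}{2}\theta r^p,\,t_0+c_1\theta r^p]$. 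Each of these terms grows geometrically in $j$, with ratio $(2^p\lambda^{q+1-p})$, so the last term dominates and is comparable to $\theta r^p$ when $\beta$ matches. Using the flexibility of $c_1$ and the time interval in which each expansion step holds (positivity at every time of the later part of the window, which can be re-propagated by \Cref{lemma: propagation of pos in meas}), we reach the target window and conclude that $\essinf_{Q^+}u\ge c_2^{-1}M_0$.

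\textbf{Main obstacle.} The delicate point is the time bookkeeping in Step 3. It must be reconciled with the constraint in Step 1 that the intrinsic cylinders fit inside the domain $B(x_0,4r)\times(t_0\pm c_1\theta(4r)^p)$. The final time reached depends on $s_0$ and on the growth of the levels, so it has to be pinned inside the prescribed window $Q^+$. I would first try to choose $\beta$ exactly so that the geometric series of waiting times is comparable to $\theta r^p$ uniformly in $s_0$. I would then absorb the remaining mismatch by exploiting that \Cref{prop: expansion of pos} provides positivity on a whole interval of times, from which a further propagation step restarts.
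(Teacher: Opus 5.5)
Your proposal takes essentially the same route as the paper. You compare $\esssup$ over nested backward intrinsic cylinders with $u(x_0,t_0)(1-s)^{-\beta}$, and then use the De Giorgi lemma on the supersolution $-u$ with $\mu^-=-2^\beta N_{s_0}$ to get a time slice of positive density (the shifted function $2^\beta N_{s_0}-u$ is not itself a subsolution when $q\neq 1$). From there you iterate expansion of positivity over geometrically growing balls with $\beta$ tied to $\log(1/\lambda)$, and finally land in $Q^+$; the paper handles that last step by fixing $t_m$ in the window with $\essinf_{B(x_0,r)}u(t_m)\le 2\essinf_{Q^+}u$ and lowering the last level by a factor $\delta$ bounded away from zero. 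One point you should state explicitly, as the paper does: the first expansion step starts from the $\beta$-dependent density and has its own constants $\lambda_0,b_0$. Every later step starts from full positivity on the previous ball, hence density at least $c_\mu^{-1}$ on the enlarged ball, so its $\lambda$ does not depend on $\beta$ and fixing $\beta=\log_2(1/\lambda)$ is not circular.
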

\begin{proof}
Denote $z_0=(x_0,t_0)$. If $u(z_0)=0$ we are done. Otherwise, denote $\omega=u(z_0)^{q+1-p}$, and let
\begin{equation*}
    m(\rho) = \esssup_{Q^{-}_{\rho,\omega}(z_0)} u \quad\text{and}\quad n(\rho)=\biggl(1-\frac{4\rho}{r}\biggr)^{-\beta}u(z_0)    
\end{equation*}
for $\rho\in [0,r/4)$, and some $\beta>0$. By \Cref{cor: lower semicont of supersolutions}, we may assume that $m(0)=u(z_0)=n(0)$. Let
\begin{equation*}
    r_0 = \sup \{ \rho \in (0,r): m(\rho)\geq n(\rho) \}.
\end{equation*}
Note that $r_0<r/4$ since $m$ is bounded and $n(\rho)\to\infty$ as $\rho\to r/4$. By upper semicontinuity of $m-n$ we have $m(r_0)\geq n(r_0)$. There must exist $\Tilde z=(\Tilde x, \Tilde t)\in Q_{r_0,\theta}^-(z_0)$ such that 
\begin{equation}
    \label{eq: sup almost realized for exp of pos proced}
    u(\Tilde z)\geq \frac{3}{4}m(r_0) \geq \frac{3}{4}n(r_0).
\end{equation}
Let $\tau=\frac{5}{4}$ and let $n\in \mathbb N$ such that $\tau^{-n}\leq \frac{\frac{r}{4}-r_0}{2r}\leq \tau^{1-n}$. Define $\rho=\tau^{-n}r.$ Then $Q^-_{\rho,\theta}(\Tilde z)\subset Q^-_{\frac{r_0+r/4}{2},\theta}(z_0)$  and by definition of $r_0$ we obtain
\begin{equation*}
\begin{split}
    \esssup_{Q_{\rho,\theta}(\Tilde z)} u & \leq \esssup_{Q_{\frac{r/4+r_0}{2},\theta}(z)} u \leq n\biggl(\frac{r/4+r_0}{2}\biggr) \\
    &= 2^{-\beta}\biggl(\frac{\frac{r}{4}-r_0}{2r}\biggr)^{-\beta}u(z_0) \\
    & \leq 2^{-\beta} \tau^{n\beta} u(z_0).
\end{split}
\end{equation*}
Set $\mu=2^{-\beta} \tau^{n\beta} u(z_0)$. We claim that 
\begin{equation}
    \label{eq: measure density dep on beta}
    \nu( \{ u > \frac{1}{4}\tau^{-\beta} \mu \}\cap Q_{\rho,\omega}^-(\Tilde z)) \geq \gamma_0\nu(Q^-_{\rho,\omega}(\Tilde z))
\end{equation}
for some $\gamma_0$ depending only on $c_\mu$, $c_P$, $p$, $q$ and $\beta$. If \eqref{eq: measure density dep on beta} fails, then we have
\begin{equation*}
    \nu(\{ \mu-u < \mu-\frac{1}{4}\tau^{-\beta}\mu\}\cap Q^-_{\rho,\omega}(\Tilde z)) \leq \gamma_0 \nu(Q_{\rho,\omega}^-(\Tilde z)).
\end{equation*}
Applying \Cref{lemma: de giorgi lemma} to the supersolution $-u$, with parameters 
\begin{equation*}
    \mu^-=-\mu, \quad a=\frac{1-\frac{\tau^{-\beta}}{2}}{1-\frac{\tau^{-\beta}}{4}}, \quad \sigma=2\quad\text{and } M=\frac{1}{1-\frac{1}{4}\tau^{-\beta}}
\end{equation*}
we find that 
\begin{equation*}
    \mu-u(\Tilde z) \geq a( \mu-\frac{1}{4}\tau^{-\beta}\mu)=\mu-\frac{1}{2}\tau^{-\beta} \mu,
\end{equation*}
if $\gamma_0$ is small enough, depending only on $c_\mu$, $c_P$, $p$, $q$ and $\beta$. The previous estimate can be equivalently rewritten as 
\begin{equation*}
    u(\Tilde z) \leq \frac{1}{2}\tau^{-\beta}\mu.
\end{equation*}
This can not hold, since \eqref{eq: sup almost realized for exp of pos proced} implies $u(\Tilde z)\geq \frac{3}{4}\tau^{-\beta}\mu^{+}$. Hence, we conclude that \eqref{eq: measure density dep on beta} holds. Thus, there exists $t'\in (\Tilde t-\omega \rho^p, \Tilde t)$ with 
\begin{equation*}
    \mu( \{ u(t') > \frac{1}{4} \tau^{-\beta} \mu \}\cap B(x_0,\rho)) \geq \gamma_0 \mu(B(x_0,\rho)).
\end{equation*}
Applying \Cref{prop: expansion of pos} we obtain 
\begin{equation}
    \label{eq: first expanded pos}
    \essinf_{B(\Tilde x,\rho))}u(s_0) \geq \lambda_0 \mu 
\end{equation}
for some $\lambda_0$ and $b_0$ depending on $c_\mu, c_P, p$ and $\beta$, and $s_0=t'+b_0 \rho^p (n(r_0))^{q+1-p}$. Then 
\begin{equation*}
    \mu(\{u(s_0) > \lambda_0 \mu \} \cap B(\Tilde x, \tau\rho)) \geq \frac{1}{c_\mu} \mu(B(\Tilde x, \tau\rho)).
\end{equation*}
Let $\lambda$ and $c_1$ be the constants in \Cref{prop: expansion of pos} corresponding to $\alpha=(1-\frac{1}{c_\mu})$ and $\sigma=\tau$. For $i\in \{1,2,3,\dots\}$, we let 
\begin{equation*}
    s_i=s_{i-1}+b (\tau^n\rho)^p(\lambda_0\lambda^{n-1}\mu)^{q+1-p}.
\end{equation*}
Applying \Cref{prop: expansion of pos} we find that 
\begin{equation*}
    \essinf_{B(\Tilde x,\tau \rho)}u(s_1) \geq \lambda \lambda_0 \mu
\end{equation*}
for some $\lambda$ and $b$ depending on $c_\mu$, $c_p$, $p$ and $q$. Hence
\begin{equation*}
    \mu(\{u(s_1)>\lambda \lambda_0\mu\}\cap B(\Tilde x,\tau^2\rho))\geq \frac{1}{c_\mu}\mu(B(\Tilde x,\tau^2\rho)).
\end{equation*}
Continuing iteratively, we find that 
\begin{equation}
\label{eq: expanded pos for Harnack after iteration}
    \essinf_{B(\Tilde x, \tau^{n}\rho)} u(s_n) \geq \lambda^{n} \lambda_0 \mu,
\end{equation}
provided that $s_n\leq t_0+\theta (4r)^p$. We will see that $s_\leq t_0+\theta (4r)^p$ holds if the constant $c_1$ is large enough. Let $0<\delta<1$. The previous estimate implies that
\begin{equation*}
    u(s_n) \geq \delta \lambda^{n} \lambda_0 \mu\text{ in } B(\Tilde x, \tau^{n}\rho).
\end{equation*}
and hence arguing as before and applying \Cref{prop: expansion of pos} once more we obtain 
\begin{equation*}
    u(s_n+b (\tau r)^p(\delta\lambda^n\lambda_0\mu)^{q+1-p}) \geq \delta \lambda^{n} \lambda_0 \mu\text{ in } B(\Tilde x, \tau^{n+1}\rho).
\end{equation*}
We recall $\mu=2^{-\beta} \tau^{n\beta} u(z_0)$, and choose $\beta=-\log_{\tau}\lambda$. Thus $\lambda^n \mu=2^{-\beta} u(z_0)$. By the definition of $\rho$ and $\tau$ and using the fact that $d(x_0,\Tilde x)\leq \frac{r}{4}$ the previous estimate yields
\begin{equation}
    \label{eq: Harnack before choice of delta to get inf time}
    u(s_n+b (\tau r)^p(\delta\lambda_0 2^{-\beta} u(z_0))^{q+1-p}) \geq \delta 2^{-\beta} \lambda_0 u(z_0)\text{ in } B(x_0, \rho).
\end{equation}
We have
\begin{equation*}
    s_n = t' + b_0 \rho^p \mu^{q+1-p} + b \lambda_0^{q+1-p}\rho^p \mu^{q+1-p}\sum_{i=1}^n \tau^{ip}\lambda^{(i-1)(q+1-p)},
\end{equation*}
and note that 
\begin{equation}
    \label{eq: bounds for sn harnack proof}
    t_0 - 2 r^pu(z_0)^{q+1-p}\leq s_n \leq t_0+ 2^{\beta(p-q-1)}(b_0 + b\tau^p\lambda_0^{q+1-p})r^p u(z_0)^{q+1-p} 
\end{equation}
Let $c_1>0$ to be determined, and let 
\begin{equation*}
    t_m\in (t_0+c_1 u(x_0,t_0)^{q+1-p}/2 r^p,t_0+c_1 u(x_0,t_0)^{q+1-p} r^p)
\end{equation*} be such that 
\begin{equation*}
    \essinf_{B(x_0,r)}u(t_m)\leq 2\essinf_{Q^+} u. 
\end{equation*}
We claim that for $c_1$ sufficiently large as in the statement of the theorem, we can choose $\delta$ depending only on such that $s_n+b (\tau r)^p (\delta\lambda_0 2^{-\beta} u(z_0))^{q+1-p}=t_m$. We set
\begin{equation}
    \label{eq: choice of delta in harnack proof}
    \delta^{q+1-p} = \frac{t_m-s_n}{(\tau r)^pb\lambda_0^{q+1-p}2^{-\beta}u(z_0)^{q+1-p}}.
\end{equation}
By \eqref{eq: bounds for sn harnack proof} we have
\begin{equation*}
\begin{split}
    \frac{c_1/2-2^{\beta(p-q-1)} (b_0 + b\tau^p\lambda_0^{q+1-p})}{\tau^p b\lambda_0^{q+1-p}2^{-\beta}}& \leq \frac{t_m-s_n}{(\tau r)^pb\lambda_0^{q+1-p}2^{-\beta}u(z_0)^{q+1-p}} \\
    & \leq \frac{c_1+2}{\tau^p b\lambda_0^{q+1-p}2^{-\beta}},
\end{split}
\end{equation*}
which guarantees that $\delta$ as chosen in \eqref{eq: choice of delta in harnack proof} satisfies $0<\delta<1$ and $\delta$ is bounded away from zero with an estimate depending on $p$, $q$, $c_\mu$ and $c_P$, for the choice $c_1=2 2^{\beta(p-q-1)} (b_0 + b\tau^p\lambda_0^{q+1-p})+2\tau^pb\lambda_0^{q+1-p}2^{-\beta}$. Then, returning to \eqref{eq: Harnack before choice of delta to get inf time} we have
\begin{equation*}
    u(z_0)\leq c \essinf_{B(x_0,r)}u(s_n+b (\tau r)^p(\delta\lambda_0 2^{-\beta} u(z_0))^{q+1-p}) \leq c \essinf_{B(x_0,r)}u(t_m)\leq c\essinf_{Q^+} u,
\end{equation*}
which finishes the proof.
\end{proof}

\section{Existence for a Cauchy problem}
\label{section: existence}
We consider the Cauchy problem associated with \eqref{eq: PDE}, formally written as
\begin{equation}
\label{eq: Cauchy problem}
    \begin{cases}
       \partial_t (\lvert u\rvert^{q-1} u)-\div (\lvert \nabla u\rvert^{p-2}\nabla u )=0&\mbox{in }X\times(0,T),\\
       u(x,0)= u_0(x) &\mbox{in } X.
     \end{cases}
\end{equation}
For signed functions we recall the convention \eqref{eq: signconv}.

\begin{definition}
Let $1<p<\infty$ and $0<q<\infty$. Let $u_0\in L^{q+1}(X)$. A function $u\in L^p(0,T;N^{1,p}(X))\cap C([0,T];L^{q+1}(X))$ is a  solution of \eqref{eq: Cauchy problem} if $u(0)=u_0$ and 
\begin{equation}
    \label{eq: var ineq for Neumann}
   p\int_{X_T} u^{q}\partial_t{\varphi}\dnu  + \int_{X_T} g_u^p\dnu \leq \int_{X_T} g_{u+\varphi}^p\dnu,
\end{equation}
for every $\varphi\in L^p(0,T;N^{1,p}(X))\cap L^{q+1}(X)$ with $\partial_t \varphi \in L^{q+1}(\Omega_T)$ and with compact support in time.
\end{definition}
As in \cite{EleniusKinnunen}, we work with the following compactness condition for existence. This criterion is used to handle the nonlinearity in the time derivative in existence proofs for $q\neq 1$. 
\begin{definition}
    \label{def: rellich kondrachov property}
    Let $1<p<\infty$. We say that the Rellich--Kondrachov property holds in $(X,d,\mu)$, if there exists an open cover $U_i$, $i\in I$, of $X$, such that the embedding $N^{1,p}(X)\hookrightarrow L^p(U_i)$ is compact for each $i\in I$.
\end{definition}
The Rellich--Kondrachov peroperty holds, for example, for Riemannian manifolds. Other conditions guaranteeing existence could be imposed as well.

Next we establish the existence of solutions to the Cauchy problem \eqref{eq: Cauchy problem} under the assumption that the Rellich--Kondrachov property holds in $(X,d,\mu)$ and that the Newton--Sobolev space $N^{1,p}(X)$ is reflexive. The proof largely follows the argument of \cite[Theorem 5.3]{EleniusKinnunen}. The reflexivity of $N^{1,p}(X)$ is applied to conclude strong measurability of the solution $u$ as a function $u\in L^p(0,T;N^{1,p}(X))$. We note that $N^{1,p}(X)$ is reflexive if $\mu$ is doubling, see \cite{AmbrosioColomboSimone}. As a by-product of our proof we  obtain mass conservation inequalities for solutions and their upper gradients.
\begin{theorem}\label{thm: Cauchy existence}
Let $1<p<\infty$ and $0<q<\infty$ with $q+1\leq p$. Assume that $N^{1,p}(X)$ is reflexive, and for $q \neq 1$ assume that the Rellich--Kondrachov property from \Cref{def: rellich kondrachov property} holds. Let $u_0 \in N^{1,p}(X)\cap L^{q+1}(X)\cap L^\infty(X)$ be a nonnegative function. Then there exists a nonnegative solution $u$ of \eqref{eq: Cauchy problem}, which satisfies
\begin{equation}
    \label{eq: mass conserv}
    \int_{X} u(t)^{q}\dmu \leq \int_{X} u_0^{q}\dmu
\end{equation}
and
\begin{equation}
    \label{eq: neumann problem energy decrease}
    \int_X g_{u(t)}^p\dmu \leq \int_{B} g_{u_0}^p\dmu
\end{equation}
for every $0<t\le T$.
\end{theorem}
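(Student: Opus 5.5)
We construct the solution by the minimizing movements scheme, following \cite[Theorem 5.3]{EleniusKinnunen}. Fix $N\in\mathbb N$, set $h=T/N$ and $u_0^h=u_0$. For $i=1,\dots,N$ let $u_i$ minimize
\begin{equation*}
    F_i(w)=\int_X g_w^p\dmu + \frac{p}{h}\int_X \Bigl(\frac{q}{q+1}\lvert w\rvert^{q+1} - u_{i-1}^{q}w\Bigr)\dmu
\end{equation*}
over $w\in N^{1,p}(X)\cap L^{q+1}(X)$. The functional is coercive and weakly lower semicontinuous; we use reflexivity of $N^{1,p}(X)$ and lower semicontinuity of the $p$-energy. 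Strict convexity of the $L^{q+1}$ term gives uniqueness. The first variation is the discrete variational inequality
\begin{equation*}
    \frac{p}{h}\int_X (u_i^q-u_{i-1}^q)\varphi\dmu + \int_X g_{u_i}^p\dmu \le \int_X g_{u_i+\varphi}^p\dmu .
\end{equation*}
Three properties propagate by induction, each proved by comparing $F_i$ at $u_i$ with a truncated competitor.
\begin{itemize}
    \item[(a)] Nonnegativity. Since $u_{i-1}\ge 0$, replacing $u_i$ by $u_i^+$ does not increase $F_i$, so $u_i\ge0$.
    \item[(b)] An $L^\infty$ bound. Truncating at $\|u_0\|_\infty$, i.e.\ using $\min(u_i,\|u_0\|_\infty)$, shows $u_i\le\|u_0\|_\infty$.
    \item[(c)] Monotonicity of energy and mass. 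Testing $F_i(u_i)\le F_i(u_{i-1})$ and using Young's inequality $u_{i-1}^qu_i\le \frac{q}{q+1}u_{i-1}^{q+1}+\frac1{q+1}u_i^{q+1}$ gives
    \begin{equation*}
        \int_X g_{u_i}^p\dmu+\frac{p}{h(q+1)}\int_X(u_i^{q+1}-u_{i-1}^{q+1})\dmu\le \int_X g_{u_{i-1}}^p\dmu .
    \end{equation*}
    This yields energy decrease, \eqref{eq: neumann problem energy decrease}, together with the $L^{q+1}$ bound. The $L^q$ mass inequality \eqref{eq: mass conserv} should follow by testing the Euler--Lagrange inequality with a truncation of $u_i$ approximating $\chi_{\{u_i>0\}}$ (possible since $u_i\ge0$ and the constant-level truncations have vanishing gradients off the transition set), and then letting the truncation parameter tend to zero; at a discrete step this gives $\int u_i^q\le\int u_{i-1}^q$.
\end{itemize}

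From (c) summed and a sharper estimate obtained from testing with $u_i-u_{i-1}$, we obtain the uniform bounds needed for compactness. Let $u^h$ be the piecewise constant interpolant and $v^h$ the piecewise linear interpolant of $u_i^q$. We get
\begin{equation*}
    \sup_t\|g_{u^h(t)}\|_p \quad\text{bounded}, \qquad \int_0^T\!\!\int_X (u^{h})^{\frac{q-1}{2}\cdot 2}\,\lvert\partial_t u^h\rvert^2\text{-type difference quotients}\dmu\dt \quad\text{bounded}.
\end{equation*}
The second bound is the standard estimate $\sum_i h^{-1}\int (u_i^{(q+1)/2}-u_{i-1}^{(q+1)/2})^2\le c\int g_{u_0}^p$, which uses $u_0\in N^{1,p}(X)$. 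This estimate gives equicontinuity in time of $(u^h)^{(q+1)/2}$ in $L^2$. Weak compactness in $L^p(0,T;N^{1,p}(X))$ then provides a limit $u$.

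\emph{Main obstacle.} The hard step is passing to the limit in the nonlinear time term $u_i^q$, which requires strong convergence of $u^h$ locally in $L^{q+1}$; for $q=1$ weak convergence suffices since the term is linear. For $q\neq1$ we combine the uniform $N^{1,p}$ bound with the Rellich--Kondrachov property on each $U_i$ and the time-equicontinuity, via an Aubin--Lions/Arzel\`a--Ascoli type diagonal argument over a countable subcover. This yields $u^h\to u$ in $L^{q+1}(U_i\times(0,T))$, using the $L^\infty$ bound to interpolate between $L^p$ and $L^{q+1}$. Hence $(u^h)^q\to u^q$ strongly.

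With this in hand, we sum the discrete inequality against a test function $\varphi$ with compact support in time and use discrete integration by parts, so that the difference quotient falls on $\varphi$. Letting $h\to0$, the term $\int g_{u^h}^p$ on the left is handled by lower semicontinuity. For the term $\int g_{u^h+\varphi}^p$ on the right we use the standard trick of testing with $\varphi+u^h-u$ replaced via convexity (as in \cite{EleniusKinnunen}), or alternatively we first establish the energy equality in the limit so that $\int g_{u^h}^p\to\int g_u^p$. This gives \eqref{eq: var ineq for Neumann}. Continuity $u\in C([0,T];L^{q+1}(X))$ and $u(0)=u_0$ follow from the equicontinuity estimate. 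Finally, \eqref{eq: mass conserv} and \eqref{eq: neumann problem energy decrease} pass to the limit by Fatou's lemma and by lower semicontinuity of the energy at each fixed $t$.
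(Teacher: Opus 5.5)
Your overall route is the paper's: minimizing movements, truncation to $0\le u_i\le\|u_0\|_\infty$, the test with $\min\{\ell u_i,1\}$ for the $L^q$ mass, the dissipation bound for $u^{(q+1)/2}$, and Rellich--Kondrachov plus Simon's theorem to identify the weak limit of $(u^h)^q$.

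\textbf{Slips in the discrete step.} The functional must contain $\frac{1}{q+1}\lvert w\rvert^{q+1}$, not $\frac{q}{q+1}\lvert w\rvert^{q+1}$; with your coefficient the first variation involves $q u_i^q-u_{i-1}^q$. The resulting inequality also has a minus sign:
$\int_X g_{u_i}^p\,d\mu-\frac{p}{h}\int_X(u_i^q-u_{i-1}^q)\varphi\,d\mu\le\int_X g_{u_i+\varphi}^p\,d\mu$.
With this sign everything in (c) works out:
\begin{itemize}
\item $\varphi=-s\min\{\ell u_i,1\}$ gives the mass inequality after $\ell\to\infty$.
\item $\varphi=u_{i-1}-u_i$ gives $\int_X g_{u_i}^p\,d\mu+\frac{p}{h}\int_X(u_i^q-u_{i-1}^q)(u_i-u_{i-1})\,d\mu\le\int_X g_{u_{i-1}}^p\,d\mu$. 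This is energy decrease and the dissipation bound at once.
\item $\varphi=-u_i$ with Young's inequality gives the $L^{q+1}$ bound.
\end{itemize}
The Young step you write in (c) does not produce the inequality displayed there.

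\textbf{The genuine gap} is the passage to the limit in the variational inequality, which you leave to a ``standard trick''. Since $u^h\rightharpoonup u$ only weakly in $L^p(0,T;N^{1,p}(X))$, $\int g_{u^h+\varphi}^p$ need not converge to $\int g_{u+\varphi}^p$. So the comparison function must be independent of $h$: one uses
$\int_X g_{u_i}^p\,d\mu+\frac{p}{h}\int_X(u_i^q-u_{i-1}^q)(u_i-v)\,d\mu\le\int_X g_v^p\,d\mu$ with $v$ fixed.
The $v$-term survives the limit only after discrete integration by parts, i.e.\ only if $\partial_t v\in L^{q+1}$. Replacing $\varphi$ by $\varphi+u-u^h$ amounts to taking $v=u+\varphi$. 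That choice is not admissible, because nothing controls time difference quotients of $u$.

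The missing idea is the paper's two-stage argument.
\begin{itemize}
\item First, bound the $u_i$-term from below by Young's inequality, $(u_i^q-u_{i-1}^q)u_i\ge\frac{q}{q+1}(u_i^{q+1}-u_{i-1}^{q+1})$, which telescopes. Pass to the limit while keeping $\frac{pq}{q+1}\int_X u(\tau)^{q+1}\,d\mu$ and $\frac{pq}{q+1}\int_X u_0^{q+1}\,d\mu$. This yields an inequality for every $v$ with $\partial_t v\in L^{q+1}$.
\item Second, insert $v=\eta u_\delta+\varphi$, where $u_\delta$ is the exponential time mollification of the limit $u$ and $\eta$ is a time cutoff. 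Since $\partial_t u_\delta=\frac{1}{\delta}(u-u_\delta)$, one has $\partial_t u_\delta\,(u^q-u_\delta^q)\ge 0$. Hence $-p\int\eta\,\partial_t u_\delta\,u^q\le\frac{p}{q+1}\int\partial_t\eta\,u_\delta^{q+1}$.
\item Let $\delta\to0$, then remove $\eta$ using $u\in C([0,T];L^{q+1}(X))$. The $\partial_t\eta$-terms cancel exactly the two retained terms, and the required variational inequality follows.
\end{itemize}

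Your alternative, first proving $\int g_{u^h}^p\to\int g_u^p$, does not close the gap. That energy identity needs the same mollification argument. Even granted, weak convergence plus convergence of $\|g_{u^h}\|_{L^p}$ does not give $\int g_{u^h+\varphi}^p\to\int g_{u+\varphi}^p$ in a metric measure space, because $w\mapsto\|g_w\|_{L^p}$ need not be uniformly convex. For example, on $\mathbb{R}^2$ with the $\ell^\infty$ metric, $g_w=\lvert\nabla w\rvert_{\ell^1}$.
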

\begin{proof}
Let $k\in\mathbb N$ and let $h_k=\frac{T}{k}$, $t_i= i h_k$ for every $i\in\{0,\dots, k\}$. 
Let $u_{k,0}=u_0$. 
We will proceed recursively.
Suppose that, for some $i\in\{1,\dots,k\}$, we have selected a nonnegative function $u_{k,{i-1}}\in N^{1,p}(X)\cap L^{q+1}(X)\cap L^\infty(X)$ with $\| u_{k,i-1}\|_\infty\leq \|u_{0}\|_\infty$.
Consider the variational integral
\begin{equation*}
    F_{k,i}(u)  = \int_X g_u^p \dmu+ \frac{p}{h_k(q+1)} \int_X \bigl(\lvert u\rvert^{q+1}  -(q+1) u_{k,i-1}^{q} u\bigr)\dmu,
\end{equation*}
where $u\in N^{1,p}(X)\cap L^{q+1}(X)$. Let us show the existence of a minimizer. Let $u_j$, $j\in \mathbb N$, be a minimizing sequence. It is easy to see that the sequence $u_j$, $j\in \mathbb N$, is bounded in $L^{q+1}(X)$ and $g_{u_j}$, $j\in\mathbb N$, is bounded in $L^p(X)$. Since $0\leq u_{k,i-1}\leq \|u_0\|_\infty$ one veries that $F_{k,i}(\min(\max(u,0),\|u_0\|_\infty))\leq F_{k,i}(u)$ for any $u$. Hence, the sequence $u_j$ may be assumed to satisfy $0\leq u_j\leq \|u_0\|_\infty$. Together, this shows that the sequence $u_j$ is bounded in $N^{1,p}(X)$ and we conclude the existence of a minimizer $u_{k,i}$ with $0\leq u_{k,i} \leq \|u_0\|_\infty$.

Let $v\in N^{1,p}(X)\cap L^{q+1}(X)$ and $h>0$. By testing the minimizing property with $u_{k,i}+h(v-u_{k,i})=(1-h) u_{k,i}+hv$, and using the convexity of minimal upper gradients, we obtain
\begin{equation*}
\begin{split}
    &\int_X g_{u_{k,i}}^p \dmu + \frac{p}{h_k(q+1)}\int_X\bigl(  u_{k,i}^{q+1} - (q+1) u^{q}_{k,i-1} u_{k,i}\bigr)\dmu
    \leq \int_X g_{(1-h)u_{k,i}+h v}^p \dmu\\
    &\qquad\qquad+ \frac{p}{h_k(q+1)}\int_X\bigl( \lvert (1-h)u_{k,i}+h v\rvert^{q+1} - (q+1) u^{q}_{k,i-1}( (1-h)u_{k,i}+hv)\bigr)\dmu \\
    &\qquad \leq \int_X\bigl( (1-h)g_{u_{k,i}}^p+hg_v^p\bigr) \dmu\\
    &\qquad\qquad+ \frac{p}{h_k(q+1)}\int_X \bigl(\lvert (1-h)u_{k,i}+h v\rvert^{q+1} - (q+1)u^{q}_{k,i-1}( (1-h)u_{k,i}+hv)\bigr)\dmu.
\end{split}
\end{equation*}
Rearranging the terms gives
\begin{align*}
    &\int_X g_{u_{k,i}}^p \dmu + \frac{p}{h_k(q+1)} \int_X \Bigl(\frac{1}{h}( u_{k,i}^{q+1} - \lvert (1-h)u_{k,i}+h v\rvert^{q+1}) - (q+1) u^{q}_{k,i-1} ( u_{k,i}-v)\Bigr)\dmu \\
    &\qquad\leq \int_X g_v^p \dmu,
\end{align*}
and by letting $h\to 0$, we deduce that 
\begin{equation}
    \label{eq: tested var ineq for discretization}
    \int_X g_{u_{k,i}}^p \dmu + \frac{p}{h_k}\int_X (u_{k,i}^{q} - u_{k,i-1}^{q})(u_{k,i}-v)\dmu \leq \int_X g_v^p\dmu.
\end{equation}

Next, we will apply \eqref{eq: tested var ineq for discretization} to obtain estimates for $u_{k,i}$. Let $h>0$, $\ell\in\mathbb N$ and let $v=u_{k,i}-h\min\{\ell u_{k,i},1\}$. Testing \eqref{eq: tested var ineq for discretization} with $v$ gives
\begin{equation*}
\begin{split}
    &\int_X g_{u_{k,i}}^p \dmu + \frac{hp}{h_k}\int_X (u_{k,i}^{q} - u_{k,i-1}^{q}) \min\{\ell u_{k,i},1\}\dmu \\
    &\qquad\leq \int_{X\cap \{\ell u_{k,i}>1\}} g_u^p\dmu + \lvert 1-h\ell\rvert^p\int_{X\cap \{\ell u_{k,i}<1\}} g_u^p\dmu.
\end{split}
\end{equation*}
Rearranging terms we obtain
\begin{equation*}
    \frac{p}{h_k}\int_X (u_{k,i}^{q} - u_{k,i-1}^{q}) \min\{\ell u_{k,i},1\}\dmu \leq \frac{1}{h}(\lvert 1-h\ell\rvert^p-1)\int_{X\cap \{\ell u_{k,i}<1\}} g_u^p\dmu.
\end{equation*}
Letting $h\to 0$, we have
\begin{equation*}
    \lim_{h\to 0}\frac{1}{h}(\lvert 1-h\ell\rvert^p-1)\int_{X\cap \{\ell u_{k,i}<1\}} g_u^p\dmu = -p\ell^{p-1}\int_{X\cap \{\ell u_{k,i}<1\}} g_u^p\dmu \leq 0,
\end{equation*}
so that 
\begin{equation*}
    \frac{p}{h_k}\int_X (u_{k,i}^{q} - u_{k,i-1}^{q}) \min\{\ell u_{k,i},1\}\dmu\leq 0,
\end{equation*}
from which we conclude that
\begin{equation*}
    \int_X u_{k,i}^{q}\min\{\ell u_{k,i},1\}\dmu\leq \int_X u_{k,i-1}^{q}\dmu.
\end{equation*}
Letting $\ell\to\infty$, we have
\begin{equation*}
    \int_X u_{k,i}^{q}\dmu\leq \int_X u_{k,i-1}^{q}\dmu.
\end{equation*}
Applying the above recursively shows 
\begin{equation}
    \label{eq: discrete ineq for mass preserv}
    \int_{X} u_{k,i}^{q}\dmu \leq \int_{X} u_0^{q}\dmu.
\end{equation}

For $u,v\in \mathbb R$, we denote
\begin{equation*}
     b(u,v) = \lvert u\rvert^{q+1}-\rvert v\rvert^{q+1}-(q+1) v^{q}(u-v).
\end{equation*}
By \cite[Lemma 2.5]{BogeleinDuzaarMarcelliniScheven}, there exists a constant $c$, depending only on $q$, such that
\begin{equation}
    \label{eq: inequality for b}
    c\lvert u^\frac{q+1}{2}-v^\frac{q+1}{2}\rvert^2 \leq  b(u,v) \leq (u^{q}-v^{q})(u-v)
\end{equation} 
for every $u,v\geq 0$.
Applying \eqref{eq: tested var ineq for discretization} with $v=u_{k,i-1}$ and using \eqref{eq: inequality for b} yields
\begin{equation}
\label{eq: eq for differences and energy}
    \int_X g_{u_{k,i}}^p \dmu+ \frac{1}{h_k} \int_X b(u_{k,i},u_{k,i-1})\dmu \leq \int_X g_{u_{k,i-1}}^p\dmu.
\end{equation}
A recursive application of \eqref{eq: eq for differences and energy} gives
\begin{equation}
    \label{eq: iterated est for difference quotients}
    \sum_{i=1}^k \frac{1}{h_k}\int_X  b(u_{k,i},u_{k,i-1})\dmu \leq \int_X g_{u_0}^p\dmu.
\end{equation}
Moreover, we conclude from \eqref{eq: eq for differences and energy} that 
\begin{equation}
    \label{eq: discrete energy decrease}
    \int_X g_{u_{k,i}}^p\dmu \leq \int_X g_{u_0}^p\dmu,
\end{equation}
for every $i\in \{0,\dots, k\} $. Applying \eqref{eq: tested var ineq for discretization} with $v=0$ and using Young's inequality shows that
\begin{equation*}
\begin{split}
    \int_X g_{u_{k,i}}^p \dmu + \frac{p}{h_k} \int_X   u_{k,i} ^{q+1} \dmu & \leq \frac{p}{h_k} \int_X  u_{k,i-1}^{q} u_{k,i}\dmu \\
    & \leq \frac{p}{h_k} \int_X \bigl(\frac{q}{q+1} u_{k,i-1}^{q+1} + \frac{1}{q+1} u_{k,i}^{q+1}\bigr)\dmu.
\end{split}
\end{equation*}
Absorbing the first term on the right hand side of the above it follows that
\begin{equation*}
    \int_X g_{u_{k,i}}^p \dmu + \frac{pq}{h_k(q+1)} \int_X   u_{k,i} ^{q+1} \dmu \leq \frac{p q}{h_k(q+1)} \int_X u_{k,i-1}^{q+1}\dmu.
\end{equation*}
Iterating the previous estimate finally implies that
\begin{equation}
    \label{eq: bound for gradient and sup in exist}
    \sum_{i=1}^k h_k \int_X g_{u_k,i}^p\dmu + \max_{i\in \{1,\dots,k\}} \int_X  u_{k,i}^{q+1}\dmu\leq c\int_X  u_0^{q+1}\dmu.
\end{equation}

Let $u_k:X\times(-h_k,T]\to\mathbb R$, $u_{k}(\cdot,t)=u_{k,i}$, $(i-1)h_k<t\le ih_k$, $i\in \{0,\dots, k\}$.
From \eqref{eq: bound for gradient and sup in exist} we obtain 
\begin{equation*}
    \sup_{k\in \mathbb N} \Bigl(\sup_{t\in [0,T]} \int_X  u_k^{q+1}\dmu+ \int_{X_T} g_{u_k}^p\dnu\Bigr) < \infty,
\end{equation*}
and hence the sequence $(u_k)$ is bounded in $L^p(0,T;N^{1,p}(X))\cap L^\infty(0,T;L^{q+1}(X))$. By reflexivity of $N^{1,p}(X)$, there exists $u\in L^p(0,T;N^{1,p}(X))\cap L^\infty(0,T;L^{q+1}(X))$ such that, after passing to a subsequence if necessary, again denoted by $(u_k)$, we have $u_k\rightharpoonup u$ weakly in $L^p(0,T;N^{1,p}(X))$ as $k\to\infty$. 
We claim that $u_k^{q}\rightharpoonup u^{q}$ weakly in $L^{\frac{q+1}{q}}(X_T)$ as $k\to\infty$,
after passing to a further subsequence if necessary. This is immediate for $q=1$. For $q\neq 1$, we conclude that there exists $w\in L^{\frac{q+1}{q}}(X_T)$ such that $u_{k}^{q}\rightharpoonup w$ weakly in $L^{\frac{q+1}{q}}(X_T)$. 

We claim that $w=u^{q}$. To see this, let us verify that verify that the assumptions for Simon's compactness result \cite[Theorem 1]{Simon1987} are satisfied. First, \eqref{eq: iterated est for difference quotients} and \eqref{eq: inequality for b} imply
\begin{equation}
    \label{eq: primitive continuity estimate for existence}
    \sum_{i=1}^k \frac{1}{h_k}\int_X \mathfrak \lvert u_k(t_i)^\frac{q+1}{2}-u_k(t_{i-1})^\frac{q+1}{2}\rvert^2\dmu \leq M.
\end{equation}
Let $h>0$. If $h\le h_k$, then 
\begin{equation*}
\begin{split}
    &\int_0^{T-h} \| u_k(t+h)^\frac{q+1}{2}-u_k(t)^\frac{q+1}{2}\|_{L^{2}(X)}^2 \dt \\
    &\qquad \leq \sum_{i=1}^{k} \int_{t_i-h}^{t_i} \| u_k(t_{i})^\frac{q+1}{2}-u_k(t_{i-1})^\frac{q+1}{2}\|_{L^{2}(X)}^2 
    \leq h h_k M.
\end{split}
\end{equation*}
Then assume that $h>h_k$. Let $m\in\mathbb N$ be such that $m h_k< h < (m+1)h_k $. For $0<t<T-h$, Hölder's inequality yields
\begin{equation*}
\begin{split}
    \| u_k(t+h)^\frac{q+1}{2}-&u_k(t)^\frac{q+1}{2} \|_{L^2(X)}
    \leq \sum_{j=1}^{m} \| u_k(t+jh_k)^\frac{q+1}{2}-u_k(t+(j-1)h_k)^\frac{q+1}{2} \|_{L^2(X)} \\
    &\qquad+\| u_k(t+h)^\frac{q+1}{2}- u_k(t+m h_k)^\frac{q+1}{2}\|_{L^2(X)} \\
    &\leq  m^\frac{1}{2}\Bigl(\sum_{j=1}^m \| u_k(t+jh_k)^\frac{q+1}{2}-u_k(t+(j-1)h_k)^\frac{q+1}{2} \|_{L^2(X)}^2\Bigr)^\frac{1}{2}\\
    &\qquad+ \| u_k(t+h)^\frac{q+1}{2}- u_k(t+m h_k)^\frac{q+1}{2}\|_{L^2(X)}.
\end{split}  
\end{equation*}
We integrate the estimate above over $0<t<T-h$ and apply $m<\frac{h}{h_k}$ to have
\begin{equation*}
\begin{split}
    &\int_0^{T-h} \| u_k(t+h)^\frac{q+1}{2}-u_k(t)^\frac{q+1}{2}\|_{L^{2}(X)}^2 \dt\\ &\qquad\leq \frac{c h}{h_k} \int_{0}^{T-h} \sum_{j=1}^m\| u_k(t+j h_k)^\frac{q+1}{2}-u_k(t+(j-1)h_k)^\frac{q+1}{2}\|_{L^{2}(X)}^2\dt \\ 
    & \qquad\qquad + c\int_0^{T-h} \| u_k(t+h)^\frac{q+1}{2}- u_k(t+m h_k)^\frac{q+1}{2}\|_{L^2(X)}\dt \\
    & \qquad\leq cTM h + c T M h_k\leq c h. 
\end{split}
\end{equation*}
Hence, we conclude that 
\begin{equation}
    \label{eq: conv of shifts}
    \|u_{k}(t+h)^\frac{q+1}{2}-u_k(t)^\frac{q+1}{2}\|_{L^2(0,T-h;L^2(X))}\to 0
\end{equation}
as $h\to0$ uniformly in $k$. 

Let $U_i$, $i\in I$ be a set as in \Cref{def: rellich kondrachov property}. By the compactness of the embedding $N^{1,p}(X)\hookrightarrow L^p(U_i)$, \eqref{eq: discrete energy decrease}, \eqref{eq: bound for gradient and sup in exist} and the uniform boundedness of $u_k$, we conclude that there exists a compact subset $S$ of $L^p(U_i)$, with $u_{k,i}\in S$ for every $k\in\mathbb N$, $i\in \{0,\dots,k\}$. Let $\Tilde U_i$ be a subset of $U_i$ with $\mu(\Tilde U_i)<\infty$. Then $u_{k,i}$, $k\in\mathbb N$, $i\in \{0,\dots,k\}$ is a precompact in $L^p(\Tilde U_i)$. Since $\mu(\Tilde U_i)<\infty$ and $q+1\leq p$ we conclude that $u_{k,i}$, $k\in\mathbb N$, $i\in \{0,\dots,k\}$ is precompact in $L^{q+1}(\Tilde U_i)$. By the continuity of power maps between Lebesgue spaces we conclude that there exists a compact subset of $L^{2}(\Tilde U_i)$ containing $\int_{t_1}^{t_2}u_k(t)^\frac{q+1}{2}\dt$ for every $0<t_1,t_2<T$ and $k\in\mathbb N$. Thus, we use \eqref{eq: conv of shifts} and apply \cite[Theorem 1]{Simon1987} to conclude that, after passing to a subsequence if necessary, $u_k^\frac{q+1}{2}\to v$ strongly in $L^2(\Tilde U_i)$ as $k\to\infty$ for some $v\in L^2(\Tilde U_i)$. We conclude that $u_k\to v^{\frac{2}{q+1}}$ strongly in $L^{q+1}(\Tilde U_i)$ as $k\to\infty$, which implies that $v=u^\frac{q+1}{2}$. Similarly, we conclude that $u_k^{q}\to u^q$ strongly in $L^\frac{q+1}{q}(\Tilde U_i)$, which implies that  $w=u^{q}$ in $\Tilde U_i$, and in turn $w=u^q$ in $U_i$. Since $i\in I$ was arbitrary and $X\subset \bigcup_{i\in I} U_i$, the claim follows.

It follows that $u_k^{q}\rightharpoonup u^{q}$ weakly in $L^1(B)$ as $k\to\infty$ for any ball $B=B(x_0,r)$ in $X$. From \eqref{eq: discrete ineq for mass preserv} we have
\begin{equation*}
    \int_X u_k(t)^{q}\dmu\leq \int_X u_0^{q}\dmu.
\end{equation*}
Let $\delta>0$. Integrating the above over $\tau<t<\tau+\delta$, we have
\begin{equation*}
    \frac{1}{\delta}\int_{X\times(\tau,\tau+\delta)} u_k^{q}\dnu \leq \int_X u_0^{q}\dmu.
\end{equation*}
Thus for any ball $B=B(x_0,r)$ we have
\begin{equation*}
    \frac{1}{\delta}\int_{B\times(\tau,\tau+\delta)} u^{q}\dnu\leq \liminf_{k\to\infty}\frac{1}{\delta}\int_{X\times(\tau,\tau+\delta)} u_k^{q}\dnu \leq \int_X u_0^{q}\dmu.
\end{equation*}
Since the ball $B$ was arbitrary, the above implies that
\begin{equation*}
    \frac{1}{\delta}\int_{X\times(\tau,\tau+\delta)} u^{q}\dnu\leq \int_X u_0^{q}\dmu.
\end{equation*}
Letting $\delta\to 0$, we obtain
\begin{equation*}
    \int_{X} u(\tau)^{q}\leq \int_X u_0^{q}\dmu,
\end{equation*}
which implies \eqref{eq: mass conserv}. Similarly, from \eqref{eq: discrete energy decrease} we have
\begin{equation*}
    \int_X g_{u_k}^p(t)\dmu\leq \int_{X} g_{u_0}^p\dmu,
\end{equation*}
and arguing similarly as before we conclude that \eqref{eq: neumann problem energy decrease} holds.

We have already seen that $u_k^\frac{q+1}{2}\rightharpoonup u^\frac{q+1}{2}$ weakly in $L^2(X_T)$ as $k\to\infty$. Again, this is immediate if $q=1$. We have also shown that 
\begin{equation*}
    w_k(t)=\frac{1}{h_k}\bigl(u_k(t+h_k)^\frac{q+1}{2}-u_k(t)^\frac{q+1}{2}\bigr),
\end{equation*}
is bounded in $L^2(X\times(0,T-h_k))$, independently of $k\in\mathbb N$. As such, we may pass to a further subsequence, still denoted by $w_k$, such that the sequence $(w_k)$ has a weak limit $w$ in $L^2(X_T)$. For every $\varphi\in C_0^\infty([0,T];L^p(X))$ we have
\begin{equation*}
\begin{split}
    \int_{X_T} u^\frac{q+1}{2} \partial_t \varphi \dnu & = -\lim_{k\to\infty} \int_{X_T} u_k(t)^\frac{q+1}{2}\frac{\varphi(t)-\varphi(t-h_k)}{h_k}\dnu \\
    & = -\lim_{k\to\infty} \int_{X_T} \frac{u_k(t+h_k)^\frac{q+1}{2}-u_k(t)^\frac{q+1}{2}}{h_k} \varphi(t)\dnu \\
    & = -\int_{X_T} w \varphi\dnu,
\end{split}
\end{equation*}
which implies that $\partial_t u^\frac{q+1}{2}=w\in L^2(X_T)$. This gives $u^\frac{q+1}{2}\in C([0,T];L^2(X))$ and hence $u\in C([0,T];L^{q+1}(X))$.

Let $v\in L^p(0,T;N^{1,p}(X))\cap L^{q+1}(X)$ with $\partial_t v\in L^{q+1}(X_T)$, and let $0<\tau<T$. Integrating \eqref{eq: tested var ineq for discretization} over $0<t<\tau$ gives
\begin{equation*}
    \int_{X_\tau} g_{u_{k}}^p \dnu + \frac{p}{h_k}\int_{X_\tau} (u_{k}^{q} - u_{k}(\cdot-h_k)^{q})(u_{k}-v)\dnu 
    \leq \int_{X_\tau} g_v^p\dnu.
\end{equation*}
Young's inequality shows that
\begin{equation*}
\begin{split}
    &\frac{p}{h_k}\int_{X_\tau}\bigl(u_k^{q}-u_k(\cdot-h_k)^{q}(-u_k)\bigr)\dnu\\
    &\qquad= \frac{p}{h_k}\int_{X_T}\bigl( u_{k}(\cdot-h_k)^{q} u_k -  u_k^{q+1}\bigr)\dnu \\
    &\qquad \leq \frac{p}{h_k}\int_{X_\tau}\Bigl(\frac{q}{q+1} u_k(\cdot-h_k)^{q+1} + \frac{1}{q+1}  u_k^{q+1} -  u_k^{q+1}\Bigr)\dnu \\ 
    &\qquad = \frac{p q}{h_k(q+1)} \int_{X_\tau} \bigl( u_k(\cdot-h_k)^{q+1}- u_k^{q+1}\bigr)\dnu \\
    &\qquad = \frac{pq}{q+1}\int_{X}  u_0^{q+1}\dmu - \frac{pq}{h_k(q+1)}\int_{X\times(\tau-h_k,\tau)} u_k^{q+1}\dnu.
\end{split}
\end{equation*}
We also have
\begin{equation*}
\begin{split}
    &\frac{p}{h_k}\int_{X_\tau}(u_k^{q}-u_k(\cdot-h_k)^{q})v \dnu\\
    &\qquad= \frac{p}{h_k}\int_{X_\tau} u_k^{q} v \dnu -\frac{p}{h_k}\int_{X_\tau} u_k(\cdot-h_k)^{q} v\dnu \\
    &\qquad = \frac{p}{h_k} \int_{X_\tau} u_k^{q} v \dnu - \frac{p}{h_k}\int_{X\times(-h_k,\tau-h_k)} u_k^{q} v(\cdot+h_k)\dnu\\
    &\qquad = - \frac{p}{h_k}\int_{X_\tau}  (v(\cdot+h_k)-v) u_k^{q}\dnu-\frac{p}{h_k}\int_{X\times(-h_k,0)} u_0^{q}v(\cdot+h_k)\dnu \\
    &\qquad \qquad + \frac{p}{h_k}\int_{X\times(\tau-h_k,\tau)}u_k^{q}v(\cdot+h_k)\dnu.
\end{split}
\end{equation*}
Combining the previous estimates, we obtain 
\begin{equation}
\label{eq: est for limiting var ineq and initial val}
\begin{split}
    & \frac{pq}{h_k(q+1)}\int_{X\times(\tau-h_k,\tau)} u_k^{q+1}\dnu+\int_{X_\tau} g_{u_{k}}^p \dnu
    \leq \int_{X_\tau} g_v^p\dnu\\
    &\qquad-\frac{p}{h_k}\int_{X_\tau}  (v(\cdot+h_k)-v) u_k^{q}\dnu
    -\frac{p}{h_k}\int_{X\times(-h_k,0)} u_0^{q}v(\cdot+h_k)\dnu\\
    &\qquad+ \frac{p}{h_k}\int_{X\times(\tau-h_k,\tau)}u_k^{q}v(\cdot+h_k)\dnu + \frac{pq}{q+1}\int_{X}  u_0^{q+1}\dmu.
\end{split}
\end{equation}

Let us first use \eqref{eq: est for limiting var ineq and initial val} to conclude that $u$ has the correct initial value. Testing \eqref{eq: est for limiting var ineq and initial val} with $v=u_0$ gives
\begin{equation*}
\begin{split}
    & \frac{pq}{h_k(q+1)}\int_{X\times(\tau-h_k,\tau)} u_k^{q+1}\dnu+\int_{X_\tau} g_{u_{k}}^p \dnu
    \leq \tau \int_{X} g_{u_0}^p\dmu\\
    &\qquad- \frac{p}{(q+1)}\int_{X} u_0^{{q+1}}\dmu + \frac{p}{h_k}\int_{X\times(\tau-h_k,\tau)}u_k^{q}u_0\dnu.
\end{split}
\end{equation*}
Rearranging the terms and applying \eqref{eq: inequality for b} shows that
\begin{equation*}
\begin{split}
    \frac{p}{h_k(q+1)}\int_{X\times(\tau-h_k,\tau)} b(u_0,u_k)\dnu 
    &\leq  \frac{p}{h_k(q+1)}\int_{X\times(\tau-h_k,\tau)}\bigl( q u_k^{q+1}-(q+1)u_k^{q}u_0 + u_0^{q+1}\bigr)\dnu \\
    &\leq \tau\int_{X} g_{u_0}^p\dmu  .
\end{split}
\end{equation*}
Let $\delta>0$. Integrating the estimate above over $\tau<t<\tau+\delta$, and using  $ b(u_0,u_k)\geq 0$, we obtain
\begin{equation*}
    \frac{1}{\delta}\frac{1}{h_k}\int_\tau^{\tau+\delta}\int_{X\times(t-h_k,t)} b(u_0,u_k)\dmu\ds\dt \leq \frac{\tau (q+1)}{p}\int_{X} g_{u_0}^p\dmu.
\end{equation*}
For $h_k<\delta$, we have
\begin{equation*}
    \frac{1}{\delta}\int_\tau^{\tau+\delta-h_k} b(u_0,u_k)\dnu \leq \frac{\tau(q+1)}{p}\int_{X} g_{u_0}^p\dmu.
\end{equation*}
Letting $k\to\infty$ and then $\delta \to 0$ shows
\begin{equation*}
    \int_{X} b(u_0,u(\tau))\dmu \leq \frac{\tau(q+1)}{p}\int_{X} g_{u_0}^p\dmu
\end{equation*}
for every $0<\tau<T$. This implies that $ b(u_0,u(t))\to 0 $ as $t\to 0$, which shows that $u(t)\to u_0$ in $L^{q+1}(X)$ as $t\to 0$.

Next we apply \eqref{eq: est for limiting var ineq and initial val} to show that \eqref{eq: var ineq for Neumann} holds. Let $\varphi$ be as in \eqref{eq: var ineq for Neumann}. Let $v\in L^p(0,T;N^{1,p}(X))\cap L^{q+1}(X)$ with $\partial_t v\in L^{q+1}(X_T)$ such that $v$ is compactly supported in $(0,\tau)$. Applying $v$ as a test function in  \eqref{eq: est for limiting var ineq and initial val} gives
\begin{equation}
\label{eq: var ineq before averaging and limit}
\begin{split}
    & \frac{pq}{h_k(q+1)}\int_{X\times(\tau-h_k,\tau)} u_k^{q+1}\dnu + \int_{X_\tau} g_{u_{k}}^p \dnu
    \leq \int_{X_\tau} g_v^p\dnu\\
    &\qquad+ \frac{pq}{q+1}\int_{X} u_0^{q+1}\dmu - \frac{p}{h_k}\int_{X_\tau}  (v(\cdot+h_k)-v) u_k^{q}\dnu,
\end{split}
\end{equation}
whenever $h_k$ is small enough. Let $\delta >0$, and integrate the above over $t_0<\tau<t_0+\delta$ to obtain 
\begin{equation*}
\begin{split}
    & \frac{pq}{\delta(q+1)}\int_{X\times(t_0,t_0+\delta-h_k)} u_k^{q+1}\dnu + \int_{X_\tau} g_{u_{k}}^p \dnu
    \leq \int_{X_{\tau+\delta}} g_v^p\dnu\\
    &\qquad+ \frac{pq}{q+1}\int_{X}u_0^{q+1}\dmu - \frac{1}{\delta}\int_{t_0}^{t_0+\delta}\frac{p}{h_k}\int_{X_\tau}  (v(\cdot+h_k)-v) u_k^{q}\dmu\dt\,d\tau,
\end{split}
\end{equation*}
for $h_k<\delta$. Letting $k\to \infty$ the above becomes
\begin{equation*}
\begin{split}
    & \frac{pq}{\delta(q+1)}\int_{X\times(t_0,t_0+\delta)} u^{q+1}\dnu + \int_{X_\tau} g_{u}^p \dnu 
    \leq \int_{X_{\tau+\delta}} g_v^p\dnu\\
    &\qquad+ \frac{pq}{q+1}\int_{X} u_0^{q+1}\dmu - \frac{1}{\delta}\int_{t_0}^{t_0+\delta}\frac{p}{h_k}\int_{X_\tau}  \partial_t v u^{q}\dmu\dt\,d\tau,
\end{split}
\end{equation*}
where we used the weak convergence of $u_k^{q}\rightharpoonup u^{q}$ in $L^{\frac{q+1}{q}}(X_T)$ as $k\to\infty$. Letting $\delta \to 0$ and taking $t_0=\tau$, this shows that
\begin{equation}
\label{eq: exist var ineq before time moll}
\begin{split}
    & \frac{pq}{q+1}\int_{X}u(\tau)^{q+1}\dnu + \int_{X_\tau} g_{u}^p \dnu \\
    & \qquad\leq \int_{X_\tau} g_v^p\dnu  + \frac{pq}{q+1}\int_{X}u_0^{q+1}\dmu - p \int_{X_\tau} \partial_t v u^{q}\dnu.
\end{split}
\end{equation}
Let $\delta>0$, $\epsilon>0$, and define a cutoff function $\eta$ by
\begin{equation*}
    \eta(t)
        =\begin{cases}
            0 ,&\quad t\leq \epsilon,\\
            \frac{1}{\epsilon}(t-2\epsilon), & \quad \epsilon <t\leq 2\epsilon, \\
            1,&\quad 2\epsilon < t \leq \tau-2\epsilon,\\
            \frac{1}{\epsilon}(\tau-2\epsilon-t), & \quad \tau-2\epsilon<t\leq \tau-\epsilon,\\
            0,&\quad \tau-\epsilon<t.
        \end{cases}
\end{equation*}
Testing \eqref{eq: exist var ineq before time moll} with $v=\eta (u_\delta +\varphi)$, we obtain
\begin{equation}
\label{eq: converting between forms of weak sol}
\begin{split}
    & \frac{pq}{q+1}\int_{X}u(\tau)^{q+1}\dmu + \int_{X_\tau} g_{u}^p \dnu
    \leq \int_{X_\tau} g_{ \eta u_\delta + \varphi}^p\dnu\\
    &\qquad + \frac{pq}{q+1}\int_{X} u_0^{q+1}\dmu - p \int_{X_\tau} \partial_t (\eta u_\delta +\varphi) u^{q}\dnu.
\end{split}
\end{equation}
For the term containing the time derivative, we use integration by parts and \eqref{eq: time der of mollification} to infer that
\begin{equation*}
\begin{split}
    &- p \int_{X_\tau} \partial_t (\eta u_\delta +\varphi) u^{q}\dnu  = -p \int_{X_\tau} \partial_t \varphi u^{q}\dnu-p \int_{X_\tau} \partial_t \eta u_\delta  u^{q}\dnu \\
    & \qquad\qquad-p \int_{X_\tau} \eta \partial_t u_\delta  (u^{q}-(u_\delta)^{q})\dnu -p \int_{X_\tau} \eta \partial_t u_\delta (u_{\delta})^{q}\dnu \\
    &\qquad  \leq -p \int_{X_\tau} \partial_t \varphi u^{q}\dnu-p \int_{X_\tau} \partial_t \eta u_\delta  u^{q}\dnu 
     + \frac{p}{q+1} \int_{X_\tau} \partial_t\eta (u_\delta)^{q+1} \dnu.
\end{split}
\end{equation*}
We use the estimate above and \Cref{lemma: convergence of time mollification} to pass $\delta\to 0$ in \eqref{eq: converting between forms of weak sol} and conclude that
\begin{equation}
\label{eq: exist var ineq before eps to 0}
\begin{split}
    & \frac{pq}{q+1}\int_{X}u(\tau)^{q+1}\dmu + p\int_{X_T} \partial_t\varphi u^{q}\dnu + \int_{X_\tau} g_{u}^p \dnu \\
    & \qquad\leq \int_{X_T} g_{ \eta u + \varphi}^p\dnu  + \frac{pq}{q+1}\int_{X} u_0^{q+1}\dmu - \frac{pq}{q+1} \int_{X_T} \partial_t \eta u^{q+1}\dnu.
\end{split}
\end{equation}
The definition of $\eta$ gives
\begin{equation*}
    \int_{X_T} \partial_t \eta u^{q+1}\dnu= \frac{1}{\epsilon} \int_{X\times(\epsilon,2\epsilon)} u^{q+1}\dnu - \frac{1}{\epsilon} \int_{X\times(\tau-\epsilon,\tau-2\epsilon)} u^{q+1}\dnu.
\end{equation*}
Hence, letting $\epsilon \to 0$ in \eqref{eq: exist var ineq before eps to 0} yields
\begin{equation*}
\begin{split}
    p\int_{X_T} \partial_t\varphi u^{q+1}\dnu + \int_{X_\tau} g_{u}^p \dnu \leq \int_{X_T} g_{ \eta u + \varphi}^p\dnu,
\end{split}
\end{equation*}
which shows that \eqref{eq: var ineq for Neumann} holds. 
\end{proof}

\section{Intrinsic parabolic Harnack inequality implies Poincaré inequality}
\label{section: PHI implies Poincare}
This section will finish the proof of \Cref{thm: main thm}, by showing that the intrinsic parabolic Harnack inequality implies a $(1,p)$-Poincaré inequality.
\begin{definition}
\label{def: PHI}
Let $1<p<\infty$ and $0<q<\infty$. The intrinsic parabolic Harnack inequality holds in $(X,d,\mu)$ if there exist constants $c_1>0$ and $c_2>0$ such that whenever $u$ is a nonnegative solution of \eqref{eq: PDE} in $\Omega_T$ with $\Omega\subset X$ and $T>0$, then for almost every $(x_0,t_0)\in\Omega_T$, if $u(x_0,t_0)>0$, $r>0$ and
$B(x_0,4r)\times (t_0- c_1 u(x_0,t_0)^{q+1-p} (4r)^p,t_0+c_1u(x_0,t_0)^{q+1-p}(4r)^p)\subset \Omega_T$,
we have
\begin{equation*}
    u(x_0,t_0) \leq c_2\essinf_{Q^+} u,
\end{equation*}
where $Q^+=B(x_0,r)\times (t_0+c_1u(x_0,t_0)^{q+1-p}/2 r^p,t_0+ c_1 u(x_0,t_0)^{q+1-p} r^p)$.
\end{definition}

In the following proposition, we obtain a Poincaré-type inequality for functions with zero boundary values from the intrinsic parabolic Harnack inequality.
\begin{proposition}
\label{prop: mean value ineq implies local sobolev}
Let $1<p<\infty$ and $0<q<\infty$ with $q+1<p$. Assume that the parabolic Harnack inequality holds, $N^{1,p}(X)$ is reflexive, and for $q\neq 1$ assume that the Rellich--Kondrachov property in \Cref{def: rellich kondrachov property} holds. Then there exists constants $c$ and $\gamma>0$, depending on $c_1$, $c_2$, $d$, $p$ and $q$, such that 
\begin{equation*}
    \left(\dashint_{B(x_0,r)} \lvert f\rvert^{q+1}\dmu \right)^\frac{1}{q+1}  \leq c r \left(\dashint_{B(x_0,r)} g_f^p\dmu\right)^\frac{1}{p},
\end{equation*}
for every $f\in N^{1,p}_0(B(x_0,r))$, where $x_0\in X$ and $0<r\leq \gamma\diam(X)$.
\end{proposition}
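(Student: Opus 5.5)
We plan to adapt \cite[Proposition 7.1]{EleniusKinnunen} to the intrinsic geometry. We may assume $f\geq 0$, since $g_{\lvert f\rvert}=g_f$ and $\lvert f\rvert\in N^{1,p}_0(B(x_0,r))$. By truncation and density we may also assume $f\in L^\infty$ and $f\not\equiv 0$. By homogeneity we normalize $\dashint_{B} f^{q+1}\dmu=1$, where $B=B(x_0,r)$. The scaling $w(x,t)=au(x,a^{p-1-q}t)$ shows that this normalization is harmless: both sides of the claimed inequality are $1$-homogeneous in $f$. Thus it suffices to show $r^p\dashint_B g_f^p\dmu\geq c^{-1}$.

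Apply \Cref{thm: Cauchy existence} with $u_0=f$ to obtain a nonnegative solution $u$ in $X\times(0,T)$ with $u\in C([0,T];L^{q+1}(X))$. The key quantitative information comes from the energy identity behind \eqref{eq: exist var ineq before time moll}. Testing with $v=0$ gives
\[
\frac{pq}{q+1}\int_X u(\tau)^{q+1}\dmu+\int_{X_\tau}g_u^p\dnu\leq \frac{pq}{q+1}\int_X f^{q+1}\dmu .
\]
To get a lower bound on how fast mass leaves $B$, we compare $u(\tau)$ with $f$. We test \eqref{eq: est for limiting var ineq and initial val} with $v=f$, as in the initial-value argument. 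Combined with \eqref{eq: inequality for b} and \eqref{eq: neumann problem energy decrease}, this gives
\[
\int_X b(f,u(\tau))\dmu\leq c\,\tau\int_X g_f^p\dmu .
\]
Hence, if $\tau r^{-p}\cdot r^p\dashint_B g_f^p\dmu$ is small, then $u(\tau)$ stays close to $f$ in $L^{q+1}$. In particular $u(\tau)$ retains at least half of its mass on $B$.

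The Harnack inequality enters to force spreading. Since $u(\tau)$ carries a fixed fraction of mass on $B$, there is a Lebesgue point $(x_1,t_1)$ with $x_1\in B$, $t_1\approx\tau$, and $u(x_1,t_1)\geq c^{-1}$ (recall the normalization). Because the solution lives on all of $X$, the cylinder hypothesis of \Cref{def: PHI} holds at scale $R=Kr$, provided the time $c_1u(x_1,t_1)^{q+1-p}(4R)^p$ fits in $(0,T)$. Here the intrinsic time is at most $c\,u(x_1,t_1)^{q+1-p}R^p$, since $q+1-p<0$ and $u$ is bounded below. We must also ensure $t_1-c_1u^{q+1-p}(4R)^p>0$, so we want $\tau\gtrsim u(x_1,t_1)^{q+1-p}R^p$; the value of $u$ is bounded above by $\|f\|_\infty$, which is awkward. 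We therefore plan to pick $(x_1,t_1)$ with $u$ comparable to its $L^{q+1}$-average level, which is where the normalization is used. The Harnack inequality then gives $u\geq c_2^{-1}u(x_1,t_1)\geq c^{-1}$ on $B(x_1,R)\times(\text{later times})$. This yields $\int_{B(x_1,R)}u^{q+1}\geq c^{-1}\mu(B(x_1,R))$. For $K$ large (using $r\leq\gamma\diam X$ to make room, and \eqref{eq: doubling implies radii control from above}-type growth of the larger ball, or simply the fact that $B(x_0,Kr)\setminus B$ carries mass), this contradicts closeness of $u$ to $f$, which is supported in $B$. It also contradicts mass bounds such as \eqref{eq: mass conserv}. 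In the variant used in \cite{EleniusKinnunen}, the contradiction is instead with $\mu(B(x_1,R))\gg \mu(B)$ compared to the total $L^{q+1}$ mass. Consequently the time scale $\tau\sim r^p$ must satisfy $\tau\int_X g_f^p\gtrsim\int f^{q+1}$, i.e. $r^p\dashint_B g_f^p\gtrsim 1$, which is the claim.

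The main obstacle is the intrinsic waiting time. The forward and backward time lengths depend on $u(x_1,t_1)^{q+1-p}$, which is large when $u$ is small. We must find a point where $u$ is neither too small, so that the cylinder fits before time $\tau$, nor too large, so that the conclusion gives a useful lower bound. We will try to control this using the $L^{q+1}$ closeness estimate to select level sets of $u(\tau)$ of fixed relative measure at level $\approx 1$. Without doubling, we replace the volume comparison by the assumption $r\leq\gamma\diam X$ and connectedness.
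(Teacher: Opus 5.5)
Your argument works, but it is organized differently from the paper's proof.

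\textbf{The paper's route.} It tests \Cref{lemma: var sol implies weak sol with inequality} with $\zeta_h f$ and uses \eqref{eq: neumann problem energy decrease} to get $\dashint_B f^{q+1}\dmu\le c\tau\dashint_B g_f^p\dmu+\dashint_B u(\tau)^{q+1}\dmu$. Here $\tau$ is proportional to $(\sigma r)^p\bigl(\dashint_B f^{q+1}\dmu\bigr)^{(q+1-p)/(q+1)}$. It then shows the last term is at most half the left-hand side. Either $\esssup_{B(x_0,\sigma r)}u(\tau)$ is already small, or one applies the Harnack inequality at a near-maximum point $y$. The size of $u(y,\tau)$ makes the backward cylinder fit in $(0,\tau)$, and \eqref{eq: mass conserv} plus reverse doubling for large $\sigma$ then force the last term to be small. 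The paper finishes by absorption.

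\textbf{Your route.} You argue by dichotomy. The stronger estimate $\int_X b(f,u(t))\dmu\le c\,t\int_X g_f^p\dmu$ keeps $\dashint_B u(t)^{q+1}\dmu\ge\frac12$ up to $t\sim(Kr)^p$, unless $r^p\dashint_B g_f^p\dmu$ is already bounded below. This yields a set of positive $\nu$-measure of points $(x_1,t_1)$ with $x_1\in B$, $u(x_1,t_1)\gtrsim 1$, and admissible backward cylinders. The Harnack lower bound on $B(x_1,Kr)$ then contradicts either the mass bounds, or closeness to $f$ outside $B$.

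\textbf{Comparison.} The mass-bound variant is the paper's mechanism run as a contradiction. The closeness variant is genuinely different. It needs neither \eqref{eq: mass conserv} nor growth of $\mu(B(x_0,\sigma r))/\mu(B(x_0,r))$ as $\sigma\to\infty$. It only needs $B(x_1,Kr)\setminus B(x_0,r)$ to contain a ball of measure comparable to $\mu(B(x_0,r))$ for one fixed $K$; for instance $B(z,r)$ with $d(z,x_1)=3r$ and $K=4$, by doubling. The price is that the $b$-estimate is proved only inside the proof of \Cref{thm: Cauchy existence}, so it has to be recorded as a property of the constructed solution. Picking $(x_1,t_1)$ from a set of positive measure also handles the almost-every-point clause in \Cref{def: PHI} cleanly.

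\textbf{Side remarks to correct.}
\begin{itemize}
\item You do not need an upper bound on $u(x_1,t_1)$. Since $q+1-p<0$, a larger value only shortens the intrinsic waiting time and improves the lower bound, so $u(x_1,t_1)\ge c^{-1}$ suffices. Your plan to locate points at level $\approx 1$ is unnecessary, and it could fail for concentrated $f$.
\item Apply the closeness estimate at the later Harnack time $t_2\le\tau+C(Kr)^p$, not only at $\tau$. This is harmless because the bound is linear in $t$.
\item Doubling cannot be replaced by connectedness and $r\le\gamma\diam X$. That gives only $\mu(B(x_1,Kr)\setminus B)>0$, with no uniform lower bound. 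The paper's proof uses doubling at exactly this point as well.
\end{itemize}
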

\begin{proof}
Let $f\in N^{1,p}_0(B(x_0,r))$. We may assume that $f$ is nonnegative and bounded. Let $u$ be the nonnegative solution of the Cauchy problem \eqref{eq: Cauchy problem}, given by \Cref{thm: Cauchy existence} with $u_0=f$ and $T>0$ is to be chosen later. Let $\tau>0$. For $h>0$, let the function $\zeta_h$ be given by
\begin{equation*}
    \zeta_h(t)
        =\begin{cases}
            \frac{1}{h}(t-h) ,&\quad 0<t < h,\\
            1,&\quad h\leq t\leq \tau,\\
            \frac{1}{h}(\tau+h-t),&\quad \tau<t\leq \tau+h,\\
            0,&\quad\text{otherwise}.
        \end{cases}
\end{equation*}
Applying \Cref{lemma: var sol implies weak sol with inequality} with $\varphi= \zeta_h f$ we have
\begin{equation*}
\begin{split}
    & \frac{1}{h}\int_{0}^{h}\int_{B(x_0,r)} u^{q} f\dmu\dt \\
    & \leq  \int_{B(x_0,r)\times(0,\tau)} g_u^{p-1} g_f\dmu\dt + \frac{1}{h}\int_{\tau}^{\tau+h}\int_{B(x_0,r)}  u^{q}f\dmu\dt.
\end{split}
\end{equation*}
Letting $h\to 0$, the above becomes
\begin{equation*}
    \int_{B(x_0,r)} f^{q+1}\dmu \leq  \int_{B(x_0,r)\times(0,\tau)} g_u^{p-1}g_f\dmu\dt + \int_{B(x_0,r)}  u(\tau)^{q}f\dmu.
\end{equation*}
Young's inequality implies 
\begin{equation*}
    \int_{B(x_0,r)} f^{q+1}\dmu \leq c\int_{B(x_0,r)\times(0,\tau)} g_u^{p-1}g_f\dmu\dt + \int_{B(x_0,r)}  u(\tau)^{q+1}\dmu.
\end{equation*}
Applying Hölder's inequality and \eqref{eq: neumann problem energy decrease} shows that
\begin{equation*}
    \dashint_{B(x_0,r)} f^{q+1}\dmu \leq c\tau\dashint_{B(x_0,r)} g_f^{p}\dmu + \dashint_{B(x_0,r)}  u(\tau)^{q+1}\dmu.
\end{equation*}
Let $\sigma >1$, and denote 
\begin{equation*}
    I = \left( \dashint_{B(x_0,r)}f^{q+1}\dmu\right)^\frac{q+1-p}{q+1}.
\end{equation*}
Writing the previous estimate for $\tau=c_1 4^{p-q-1}(8\sigma r)^p I$, we have
\begin{equation}
    \label{eq: basic estimate for sobolev}
    \dashint_{B(x_0,r)} f^{q+1}\dmu \leq c\sigma^p r^p I\dashint_{B(x_0,r)} g_f^{p}\dmu + \dashint_{B(x_0,r)}  u(\tau)^{q+1}\dmu.
\end{equation}
Here, $c_1$ is as in \Cref{def: PHI}. Let us estimate the second term on the right hand side of the above. We claim that 
\begin{equation*}
    \dashint_{B(x_0,r)}  u(\tau)^{q+1}\dmu \leq \frac{1}{2}\dashint_{B(x_0,r)} f^{q+1}\dmu,
\end{equation*}
provided that $\sigma$ is large enough. First, we simply estimate
\begin{equation*}
    \dashint_{B(x_0,r)}  u(\tau)^{q+1}\dmu \leq \esssup_{B(x_0,\sigma r)} u(\tau)^{q+1}.
\end{equation*}
If it holds that
\begin{equation*}
    \esssup_{B(x_0,\sigma r)} u(\tau)^{q+1}\leq \frac{1}{2}\dashint_{B(x_0,r)} f^{q+1}\dmu,
\end{equation*}
we are done. Hence, we may assume that 
\begin{equation}
    \label{eq: diff assump in sobolev proof}
    \esssup_{B(x_0,\sigma r)} u(\tau)^{q+1}> \frac{1}{2}\dashint_{B(x_0,r)} f^{q+1}\dmu.
\end{equation}
Let $y\in B(x_0,\sigma r)$ such that $\esssup_{B(x_0,\sigma r)} u(\tau) \leq 2 u(y,\tau)$. Then, \eqref{eq: diff assump in sobolev proof} implies 
\begin{equation*}
    \theta= c_1 u(y,\tau)^{q+1-p} \leq c_1 4^{p-1-q} I.
\end{equation*}
Hence, by choice of $\tau$, $u$ is a weak solution in 
\begin{equation*}
    B(y,8\sigma r)\times(\tau-\theta (8\sigma r)^p,\tau+\theta(8\sigma r)^p),
\end{equation*}
and thus we may apply the intrinsic parabolic Harnack inequality to conclude that
\begin{equation}
\label{eq: application of mean value ineq}
    \esssup_{B(x_0,\sigma r)} u(\tau)^{q+1} \leq 2^{q+1} u(y,\tau)^{q+1}\leq c \essinf_{Q^+} u^{q+1}\leq c \left(\dashint_{Q^+} u^{q}\dmu\dt\right)^\frac{q+1}{q},
\end{equation}
where $Q_+=B(y,2\sigma r)\times(\tau+\theta/2 (8\sigma r)^p,\tau+\theta (8\sigma r)^p)$. For any $t>0$ we have 
\begin{equation*}
    \int_{B(x,2\sigma r)} u(t)^{q}\dmu\leq \int_{X} u(t)^q\dmu \leq  \int_{X} u(0)^q\dmu= \int_{B(x_0,r)} f^{q}\dmu,
\end{equation*}
where we applied \eqref{eq: mass conserv}. Hence \eqref{eq: application of mean value ineq} implies 
\begin{equation}
\begin{split}
    \dashint_{B(x_0,r)}  u(\tau)^{q+1}\dmu & \leq c \left(\frac{\mu(B(x_0,r)}{\mu(B(y,2\sigma r))}\right)^\frac{q+1}{q} \left(\dashint_{B(x_0,r)} f^{q}\dmu\right)^{\frac{q+1}{q}} \\
    & \leq c \left(\frac{\mu(B(x_0,r)}{\mu(B(x_0,\sigma r))}\right)^\frac{q+1}{q} \left(\dashint_{B(x_0,r)} f^{q}\dmu\right)^{\frac{q+1}{q}},
\end{split}
\end{equation}
where in the second step we used the fact that $B(x_0,\sigma r)\subset B(y,2\sigma r)$.
Hölder's inequality gives
\begin{equation*}
    \dashint_{B(x_0,r)}  u(\tau)^{q+1}\dmu \leq c \left( \frac{\mu(B(x_0,r)}{\mu(B(x_0,\sigma r))}\right)^\frac{q+1}{q}  \dashint_{B(x_0,r)} f^{q+1}\dmu\dt.
\end{equation*}
Since $\mu$ is doubling, \eqref{eq: doubling implies radii control from above} implies that there exists a constant $\alpha>0$, depending only on $c_\mu$, such that
\begin{equation*}
    \dashint_{B(x_0,r)}  u(\tau)^{q+1}\dmu\leq c \left( \frac{1}{\sigma}\right)^\frac{\alpha (q+1)}{q}  \dashint_{B(x_0,r)} f^{q+1}\dmu\dt,
\end{equation*}
provided that $\sigma r < \diam(X)/3$. Thus, choosing $\sigma $ large enough we have
\begin{equation*}
    \dashint_{B(x_0,r)} u(\tau)^{q+1}\leq \frac{1}{2} \dashint_{B(x_0,r)} f^{q+1}\dmu.
\end{equation*}
Returning to \eqref{eq: basic estimate for sobolev} we have shown
\begin{equation*}
    \dashint_{B(x_0,r)} f^{q+1}\dmu \leq cr^p I \dashint_{B(x_0,r)} g_f^p\dmu + \frac{1}{2} \dashint_{B(x_0,r)} f^{q+1}\dmu.
\end{equation*}
Absorbing the second term on the right hand side of the above implies
\begin{equation*}
    \dashint_{B(x_0,r)} f^{q+1}\dmu \leq cr^p I \dashint_{B(x_0,r)} g_f^p\dmu.
\end{equation*}
Recalling the definition of $I$ we deduce that
\begin{equation*}
    \left(\dashint_{B(x_0,r)}f^{q+1}\dmu \right)^\frac{p}{q+1}  \leq c r^p \dashint_{B(x_0,r)} g_f^p\dmu,
\end{equation*}
which finishes the proof.
\end{proof}
\begin{corollary}
\label{cor: PHI implies PI}
Let $1<p<\infty$ and $0<q<\infty$ with $q+1<p$. Assume that the parabolic Harnack inequality holds, $\mu$ is doubling and for $q\neq 1$ assume that the Rellich--Kondrachov property in \Cref{def: rellich kondrachov property} holds. Then a weak $(q+1,p)$-Poincaré inequality holds.
\end{corollary}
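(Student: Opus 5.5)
The plan is to combine \Cref{prop: mean value ineq implies local sobolev} with the result of \cite{EleniusKinnunen} quoted in the introduction. That result states that volume doubling, a Poincaré-type inequality for functions with zero boundary values, and the elliptic Harnack inequality for $p$-harmonic functions together imply a $(1,p)$-Poincaré inequality. Since $\mu$ is doubling, $N^{1,p}(X)$ is reflexive by \cite{AmbrosioColomboSimone}. Hence the hypotheses of \Cref{prop: mean value ineq implies local sobolev} hold, and we obtain
\[
\Bigl(\dashint_{B(x_0,r)}|f|^{q+1}\dmu\Bigr)^{\frac1{q+1}}\le c r\Bigl(\dashint_{B(x_0,r)}g_f^p\dmu\Bigr)^{\frac1p}
\]
for $f\in N^{1,p}_0(B(x_0,r))$ and $r\le\gamma\diam(X)$. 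By Hölder's inequality this also gives the $L^1$ version, which is the zero-boundary-value inequality required in \cite{EleniusKinnunen}. If the cited result demands all radii rather than $r\le \gamma\diam X$, I would treat large balls with doubling and a covering argument, or observe that bounded $X$ is handled separately there.

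The next step is to derive the elliptic Harnack inequality for nonnegative $p$-harmonic functions from the intrinsic parabolic Harnack inequality. Let $v$ be nonnegative and $p$-harmonic in $B(x_0,4r)$. Time-independent functions are not solutions of \eqref{eq: PDE} when $q+1\neq p$, because the time derivative $\partial_t(v^q)$ vanishes but the scaling mismatch prevents us from simply reinterpreting $v$. Nevertheless, $u(x,t)=v(x)$ satisfies \eqref{eq: var ineq}: the time term integrates to zero and $v$ minimizes the $p$-energy at each time slice. So $u$ is a solution in $B(x_0,4r)\times\R$. The waiting-time constraint is then vacuous, because the cylinder is infinite in time. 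Applying \Cref{def: PHI} at a.e.\ $(x,t_0)$ with $x\in B(x_0,r)$ (shrinking radii appropriately) gives $v(x)\le c_2\essinf_{B(x,r')}v$. A chaining argument along balls in the geodesic space, using doubling, then yields $\esssup_{B(x_0,r)}v\le C\essinf_{B(x_0,r)}v$. The case $v(x)=0$ is trivial.

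Finally I would invoke the theorem from \cite{EleniusKinnunen} to conclude that a $(1,p)$-Poincaré inequality holds, possibly with a dilation $\tau>1$, which is the weak form. To upgrade to $(q+1,p)$, I would use the self-improvement $(1,p)\Rightarrow(p^*,p)$ under doubling cited after \Cref{def: poincare}. Since $q+1<p\le p^*$, Hölder's inequality gives the weak $(q+1,p)$-Poincaré inequality, with constants depending on $c_1,c_2,c_\mu,p,q$.

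The main obstacle is the second step. One must check carefully that stationary $p$-harmonic functions satisfy the variational inequality \eqref{eq: var ineq} with the correct test class. One must also handle the restriction to a.e.\ points together with the fact that the essinf in $Q^+$ is taken over a time slab. Time-independence resolves this second issue, since the essinf over $Q^+$ then equals the essinf over the spatial ball.
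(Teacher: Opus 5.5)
Your proposal is correct and follows the paper's route: the zero-boundary Sobolev inequality of \Cref{prop: mean value ineq implies local sobolev}, doubling, and the elliptic Harnack inequality are fed into \cite[Theorem 7.6]{EleniusKinnunen}. Your extra steps (reflexivity from doubling, deriving the elliptic Harnack inequality from stationary solutions, and the $(1,p)\Rightarrow(p^*,p)$ upgrade followed by H\"older) only make explicit what the paper's one-line proof leaves implicit. One correction: delete the claim that time-independent functions are not solutions when $q+1\neq p$. A stationary $p$-harmonic $v$ solves \eqref{eq: PDE} for every $q$, because $\partial_t(v^q)=0$, so the scaling mismatch plays no role for stationary functions.
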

\begin{proof}
Combining the conclusion of \Cref{prop: mean value ineq implies local sobolev}, doubling, and the elliptic Harnack inequality we apply \cite[Theorem 7.6]{EleniusKinnunen}  which shows the claim. 
\end{proof}
We are now ready to prove \Cref{thm: main thm}.
\begin{proof}[Proof of \Cref{thm: main thm}]
The implication $(i)\implies(ii)$ is from \Cref{thm: PI implies PHI} and the implication $(ii)\implies(i)$ is from \Cref{cor: PHI implies PI}.
\end{proof}


\begin{thebibliography}{99}

\bibitem{AmbrosioColomboSimone}
L. Ambrosio, M. Colombo and S. Di Marino. Sobolev spaces in metric measure spaces: reflexivity and lower semicontinuity of slope, \emph{Variational methods for evolving objects, Adv. Stud. Pure Math.}, Vol 67, Mathematical Society of Japan, Tokyo (2015).

\bibitem{BarlowBassKumagai}
M.T. Barlow, R.F. Bass and T. Kumagai. Stability of parabolic {H}arnack inequalities on metric measure spaces, \emph{J. Math. Soc. Japan}, 58(2):485--519, 2006.


\bibitem{BjornBjorn2011}
A. Bj\"orn and J. Bj\"orn. {\it Nonlinear potential theory on metric spaces}, volume 17 of \emph{EMS Tracts in Mathematics}. European Mathematical Society (EMS), Z\"urich, 2011.

\bibitem{BogeleinDuzaarMarcellini}
V. B\"ogelein, F. Duzaar and P. Marcellini. Parabolic systems with {$p,q$}-growth: a variational approach, \emph{Arch. Ration. Mech. Anal.}, 210(1):219--267, 2013.

\bibitem{BogeleinDuzaarMarcelliniScheven}
V. B\"ogelein, F. Duzaar, P. Marcellini and C. Scheven. Doubly nonlinear equations of porous medium type, \emph{Arch. Ration. Mech. Anal.}, 229(2):503--545, 2018.

\bibitem{BogeleinDuzaarLiao}
V. B\"ogelein, F. Duzaar and N. Liao. On the {H}\"older regularity of signed solutions to a doubly nonlinear equation, \emph{J. Funct. Anal.}, 281(9): Paper No. 109173, 58, 2021.

\bibitem{BogeleinHeranSchatzlerSinger}
V. B\"ogelein, A. Heran, L. Sch\"atzler and T. Singer. Harnack's inequality for doubly nonlinear equations of slow diffusion type, \emph{Calc. Var. Partial Differential Equations}, 60(6), 2021.


\bibitem{Delmotte}
T. Delmotte. Parabolic {H}arnack inequality and estimates of {M}arkov chains on graphs, \emph{Rev. Mat. Iberoamericana}, 15(1):181--232, 1999.

\bibitem{DiBenedetto1993}
E. DiBenedetto. \emph{Degenerate parabolic equations}, Springer-Verlag, New York, 1993.

\bibitem{DiBenedettoGianazzaVespri2008}
E. DiBenedetto, U. Gianazza and V. Vespri. Harnack estimates for quasi-linear degenerate parabolic differential equations, \emph{Acta Math.}, 200(2):181--209, 2008.

\bibitem{DiBenedettoGIanazzaVespri2012}
E. DiBenedetto, U. Gianazza and V. Vespri. {\it Harnack's inequality for degenerate and singular parabolic equations}. Springer Monographs in Mathematics. Springer, New York, 2012.

\bibitem{EleniusKinnunen}
T. Elenius and J. Kinnunen. Doubling measures, Poincaré inequalities and parabolic Harnack inequalities for a doubly nonlinear equation, preprint, arXiv 2605.27665, 2026.


\bibitem{Grigoryan1991}
A. Grigor{'}yan. The heat equation on noncompact Riemannian manifolds, \emph{Mat. Sb.}, 182(1):55--87, 1991.

\bibitem{GrigoryanSurig2024}
A. Grigor{'}yan and P. S\"urig. Finite propagation speed for Leibenson's equation on Riemannian manifolds, \emph{Comm. Anal. Geom.}, 32(9):2467-2504, 2024.

\bibitem{GrigoryanSurig2024Pisa}
A. Grigor{'}yan and P. S\"urig. Sharp propagation rate for Leibenson's equation on Riemannian manifolds, \emph{Ann. Sc.}, to appear.

\bibitem{GrigoryanSurig2025}
A. Grigor{'}yan and P. S\"urig. Upper bounds for solutions of Leibenson's equation on Riemannian manifolds, \emph{J. Funct. Anal.},  288(10): Paper No. 110878, 30, 2025.

\bibitem{GrigoryanSunSurig}
A. Grigor{'}yan, J. Sun and P. S\"urig. Sharp long distance upper bounds for solutions of Leibenson's equation on Riemannian manifolds, preprint, arXiv 2603.27791, 2026.

\bibitem{HeinonenKoskelaShanmugalingam}
J. Heinonen, P. Koskela, N. Shanmugalingam and J. T. Tyson. {\it Sobolev spaces on metric measure spaces}, volume 27 of \emph{New Mathematical Monographs}. Cambridge University Press, Cambridge, 2015. 

\bibitem{Heran}
A. Her\'an. Harnack inequality for parabolic quasi minimizers on metric spaces, \emph{Atti Accad. Naz. Lincei Rend. Lincei Mat. Appl.}, 31(3):565-592, 2020.

\bibitem{KeithZhong}
S. Keith and X. Zhong. The {P}oincar\'e{} inequality is an open ended condition, \emph{Ann. of Math. (2)}, 167(2):575--599, 2008.

\bibitem{KinnunenShanmugalingam}
J. Kinnunen and N. Shanmugalingam. Regularity of quasi-minimizers on metric spaces, \emph{Manuscripta Math.}, 105(3):401--423, 2001.

\bibitem{Liao2021}
N. Liao. Regularity of weak supersolutions to elliptic and parabolic equations: lower semicontinuity and pointwise behavior, \emph{J. Math. Pures Appl. (9)}, 147:179--204, 2021.


\bibitem{Saloff-Coste1992}
L. Saloff-Coste. A note on {P}oincar\'e, {S}obolev, and {H}arnack inequalities, \emph{Internat. Math. Res. Notices}, (2):27--38, 1992.

\bibitem{Saloff-Coste2002}
L. Saloff-Coste. {\it Aspects of {S}obolev-type inequalities}, volume 289 of \emph{London Mathematical Society Lecture Note Series}. Cambridge University Press, Cambridge, 2002.

\bibitem{Simon1987}
J. Simon. Compact sets in the space {$L^p(0,T;B)$}, \emph{Ann. Mat. Pura Appl. (4)}, 146:65--96, 1987.

\bibitem{Sturm}
K.T. Sturm. Analysis on local {D}irichlet spaces. {III}. {T}he parabolic {H}arnack inequality, \emph{J. Math. Pures Appl. (9)}, 75(3):273--297, 1996.

\bibitem{sürig2026existence}
P. S{\"u}rig. Existence results for Leibenson's equation on Riemannian manifolds, preprint, arXiv 2601.20640, 2026.

\bibitem{sürig2026upperbounds}
P. S{\"u}rig. Long time upper bounds for solutions of Leibenson's equation on Riemannian manifolds, preprint, arXiv  2604.23227, 2026.

\bibitem{Urbano}
J. M. Urbano. \emph{The method of intrinsic scaling. A systematic approach to regularity for degenerate and singular PDEs}, volume 1930 of Lecture Notes in Mathematics. Springer-Verlag, Berlin, 2008.

\end{thebibliography}

\end{document}